\def\csname ver@fixltx2e.sty\endcsname{}
\let\proof\relax 
\let\endproof\relax
\newlength{\dhatheight}
\newcommand{\doublehat}[1]{%
    \settoheight{\dhatheight}{\ensuremath{\hat{#1}}}%
    \addtolength{\dhatheight}{-0.25ex}%
    \hat{\vphantom{\rule{1pt}{\dhatheight}}%
    \smash{\hat{#1}}}}
\newcolumntype{P}[1]{>{\centering\arraybackslash}p{#1}}
\newtheorem{theorem}{Theorem}
\newtheorem{definition}{Definition}
\newtheorem{lemma}{Lemma}
\newtheorem{claim}{Claim}
\newtheorem{remark}{Remark}
\newenvironment{claimproof}{\proof}{\endproof}
\def\BibTeX{{\rm B\kern-.05em{\sc i\kern-.025em b}\kern-.08em
    T\kern-.1667em\lower.7ex\hbox{E}\kern-.125emX}}
\begin{document}
\title{Inverse Unscented Kalman Filter}

\author{Himali Singh, Kumar Vijay Mishra$^\ast$ and Arpan Chattopadhyay$^\ast$\vspace{-24pt}
\thanks{$^\ast$K. V. M. and A. C. have made equal contributions.}
\thanks{H. S. and A. C. are with the Electrical Engineering Department, Indian Institute of Technology (IIT) Delhi, India. A. C. is also associated with the Bharti School of Telecommunication Technology and Management, IIT Delhi. Email: \{eez208426, arpanc\}@ee.iitd.ac.in.} 
\thanks{K. V. M. is with the United States DEVCOM Army Research Laboratory, Adelphi, MD 20783 USA. E-mail: kvm@ieee.org.}
\thanks{A. C. acknowledges support via the professional development fund and professional development  allowance from IIT Delhi, grant no. GP/2021/ISSC/022 from I-Hub Foundation for Cobotics and grant no. CRG/2022/003707 from Science and Engineering Research Board (SERB), India. H. S. acknowledges support via Prime Minister Research Fellowship. K. V. M. acknowledges support from the National Academies of Sciences, Engineering, and Medicine via Army Research Laboratory Harry Diamond Distinguished Fellowship.}
\thanks{The conference precursor of this work has been accepted for publication at the 2023 IEEE Conference on Decision and Control (CDC).}
}

\maketitle

\begin{abstract}
Rapid advances in designing cognitive and counter-adversarial systems have motivated the development of inverse Bayesian filters. In this setting, a cognitive `adversary' tracks its target of interest via a stochastic framework such as a Kalman filter (KF). The target or `defender' then employs another \textit{inverse} stochastic filter to infer the forward filter estimates of the defender computed by the adversary. For linear systems, the \textit{inverse Kalman filter} (I-KF) has been recently shown to be effective in these counter-adversarial applications. In the paper, contrary to prior works, we focus on non-linear system dynamics and formulate the inverse unscented KF (I-UKF) to estimate the defender's state based on the unscented transform, or equivalently, statistical linearization technique. We then generalize this framework to unknown systems by proposing reproducing kernel Hilbert space-based UKF (RKHS-UKF) to learn the system dynamics and estimate the state based on its observations. Our theoretical analyses to guarantee the stochastic stability of I-UKF and RKHS-UKF in the mean-squared sense show that, provided the forward filters are stable, the inverse filters are also stable under mild system-level conditions. We show that, despite being a suboptimal filter, our proposed I-UKF is a conservative estimator, i.e., I-UKF's estimated error covariance upper-bounds its true value. Our numerical experiments for several different applications demonstrate the estimation performance of the proposed filters using \textit{recursive} Cram\'{e}r-Rao lower bound and non-credibility index (NCI).
\end{abstract}

\begin{IEEEkeywords}
Bayesian filtering, cognitive systems, counter-adversarial systems, inverse filtering, non-linear processes, unscented Kalman filter.
\end{IEEEkeywords}

\section{Introduction}
\label{sec:introduction}
Inference and control form an integral part of many dynamic systems in various engineering applications, including navigation\cite{simon2006optimal}, guidance \cite{bar2004estimation}, and radar target tracking \cite{bell2015cognitive}. Often, these applications involve \textit{cognitive} agents that sense the environment and, based on the information gathered, adapt their actions to achieve optimal performance. In military surveillance, for instance, a cognitive radar\cite{mishra2020toward} adapts its transmit waveform and receive processing to improve target detection \cite{mishra2017performance} and tracking \cite{bell2015cognitive,sharaga2015optimal}. In this context, \textit{inverse cognition} has been recently proposed for a \textit{defender} agent to detect its cognitive adversarial \textit{attacker} agent and infer the information adversary has learned about the defender \cite{krishnamurthy2019how,krishnamurthy2020identifying}. This aids in designing counter-adversarial systems to assist or desist the adversary\cite{mattila2020hmm,krishnamurthy2019how}. For example, an intelligent target senses its adversarial radar's waveform adaptations and designs smart interference to force the latter to change its actions\cite{krishnamurthy2021adversarial,kang2023}. In \cite{krishnamurthy2019how}, optimal probe signals are designed to estimate the adversary's sensor characteristics. In \cite{mattila2017inverse}, observations of an automatic sleep-tracking system are estimated for its real-time fault diagnosis. Similar examples abound in interactive learning and cyber-physical security\cite{krishnamurthy2019how,mattila2020hmm}.

In order to predict the attacker's future actions, a defender requires an estimate of the attacker's inference. In this context, \textit{inverse Bayesian filtering}\cite{krishnamurthy2019how} has been proposed to infer the attacker's inference at the defender's end. An attacker employs a (forward) Bayesian filter to compute a posterior distribution for the defender's state given its noisy observations. A common example is the Kalman filter (KF), which recursively estimates the state in linear Gaussian state-space models and is optimal in the minimum mean-squared error sense. The attacker further cognitively adapts its actions based on this inference. An inverse Bayesian filter then provides a posterior distribution for the forward Bayesian filter's estimate given noisy measurements of the attacker's actions.

In this inverse filtering context, \cite{mattila2020hmm} proposed \textit{inverse hidden Markov model} to estimate the adversary's observations and observation likelihood. In \cite{krishnamurthy2019how}, inverse filtering was extended to general linear Gaussian state-space models. Here, the attacker employed a KF to estimate the defender's state. Then, an inverse KF (I-KF) was developed to estimate the \textit{attacker's estimate of the defender's state}. In practice, counter-adversarial systems are non-linear. Here, the adversary may also employ the extended KF (EKF), which extends the standard KF to the non-linear dynamics using Taylor series expansion. For this setting, our previous work \cite{singh2022inverse, singh2022inverse_part1} proposed an inverse EKF (I-EKF) for the defender. While EKF is a widely used non-linear filter, it is sensitive to initialization/modeling errors and performs poorly when the system is considerably non-linear \cite{li2017approximate}. In many practical applications, the computation of the Jacobian matrices required for EKF is also non-trivial. In inverse cognition, some of these drawbacks may be addressed through more advanced variants of I-EKF \cite{singh2022inverse_part2}.

In spite of its ease of implementation, EKF suffers from linearization errors in highly non-linear applications\cite{daum2005nonlinear}, which can be efficiently tackled through derivative-free sigma-point KFs (SPKFs)\cite{bhaumik2019nonlinear}, which are based on a weighted sum of function evaluations at a finite number of deterministic \textit{sigma-points}. These points aim to approximate the posterior probability density of a random variable under non-linear transformation. For example, the unscented KF (UKF) \cite{julier2004unscented} draws points using the unscented transform and delivers estimates that are exact in mean for monomials up to third-degree; the covariance computation is exact only for linear functions. The unscented transform is further equivalent to statistically linearizing the non-linear functions using specific regression points such that UKF is a special case of linear regression KF\cite{lefebvre2002comment}. To this end, unlike EKF, the UKF takes into account the increased uncertainty due to linearization errors, providing enhanced performance in many applications. Square-root UKF\cite{van2001square} and Gaussian-sum UKF\cite{kottakki2014state} have also been proposed for, respectively, enhanced numerical stability and non-Gaussian models. In this paper, we focus on inverse filtering based on the unscented transform technique.

\vspace{-8pt}
\subsection{Prior Art}
Conventionally, inverse filtering was limited to non-dynamic systems for applications such as system identification, fault detection, image deblurring and signal deconvolution\cite{idier2013bayesian,gustafsson2007statistical}. However, recent cognitive and counter-adversarial systems applications have motivated the design of inverse stochastic filters. These types of inverse problems may be traced to \cite{kalman1964linear} that aimed to find the cost criterion for a given control policy. An analogous formulation also appears in inverse reinforcement learning (IRL) where the associated reward function is learned passively\cite{ng2000algorithms}. The inverse cognition agent, on the other hand, actively probes its adversarial agent and, hence, can be considered as a generalization of IRL.

The UKF is also related to general approximate Bayesian inference methods that are encountered in machine learning literature\cite{opper1999bayesian,minka2013expectation}. In this context, the UKF belongs to the class of general Gaussian filters that assume a Gaussian posterior distribution for the underlying state. The assumed posterior's mean and covariance are then updated recursively using the observations. These Gaussian filtering techniques form a subset of assumed density filtering (ADF)\cite{maybeck1982stochastic} or online Bayesian learning\cite{opper1999bayesian}, which successively approximates the posterior distribution. Expectation propagation is an extension of ADF in which new observations are utilized to iteratively modify earlier estimates\cite{minka2013expectation}. On the contrary, sequential Monte-Carlo (MC)-based methods (e.g., particle filter (PF) \cite{li2017approximate}) do not assume any posterior form and are also applicable to non-Gaussian systems. The PF framework has also been used to realize Bernoulli filters\cite{ristic2013tutorial} (for randomly switching systems), possibility PFs\cite{ristic2019robust} (for mismatched models), and probability hypothesis density (PHD)\cite{mahler2003multitarget} filters (for high-dimensional multi-object Bayesian inference). However, in general, MC approaches are computationally expensive\cite{ristic2003beyond}.

The inverse stochastic filters developed in \cite{krishnamurthy2019how,singh2022inverse_part1} assume perfect system model information on the part of both attacker and defender. However, in many applications, the prior system model information is not available. In the inverse cognition scenario, the attacker may lack the defender's state evolution information, while the defender may be unaware of the attacker's forward filter and adaptation strategy. The uncertainty in system parameters limits the applicability of the inverse filters developed so far. Some prior works on (forward) non-linear filtering have addressed the unknown system case using kernel-based techniques. While \cite{zhu2011extended} suggested coupling kernel recursive least squares (KRLS)\cite{engel2004kernel} with EKF to learn an unknown non-linear measurement model, the state transition was assumed to be known and linear. A conditional embedding operator was used to reformulate the KF algorithm in \cite{zhu2013learning} for non-linear state-transition functions in reproducing kernel Hilbert space (RKHS), but with linear observations. In our prior work\cite{singh2022inverse_part2}, we adopted an iterative expectation maximization (EM) for non-linear parameter learning and developed RKHS-EKF. However, similar to EKF, the RKHS-EKF's performance also degrades because of the linearization of non-linear functions. In this paper, we further develop RKHS-based UKF for improved performance.

\vspace{-8pt}
\subsection{Our contributions}
Preliminary results of this work appeared in our conference publication \cite{singh2023counter}, where only inverse UKF (I-UKF) was formulated and proofs of stability guarantees were not included. Our main contributions in this paper are:\\
\textbf{1) Inverse UKF.} To address the limitations of I-EKF's linearization, we consider the unscented transform and develop I-UKF. Our I-UKF estimates an adversary's inference, who also deploys a forward UKF. Similar to the inverse cognition framework investigated in \cite{krishnamurthy2019how,mattila2020hmm,singh2022inverse_part1}, we assume perfect system model information, i.e., the attacker's forward filter is known to the defender. The inverse filter is formulated using augmented states to take into account the non-additive process noise terms in the forward filter's state estimate evolution. Our numerical experiments show that I-UKF provides reasonably accurate estimates even when the defender's forward UKF assumption is incorrect. We remark that the I-UKF is different from the \textit{inversion of UKF} \cite{zhengyu2021iterated}, which estimates the input based on the output. Clearly, such an inversion of UKF may not take the same mathematical form because the UKF is employed on the adversary's side. Hence, this formulation is unrelated to our inverse cognition problem. Note that our proposed I-UKF does not follow trivially from I-KF\cite{krishnamurthy2019how} or I-EKF\cite{singh2022inverse_part1}; see also Remark~\ref{remark:I-UKF with I-KF and I-EKF}.\\
\textbf{2) Generalizations of I-UKF.} In practice, the systems often involve continuous-time state evolution or are complex-valued such that suitable continuous-discrete or complex filters are required. Hence, we generalize our I-UKF theory to obtain continuous-discrete and complex I-UKFs for inverse filtering applications. We also consider maximum correntropy criterion (MCC)-based I-UKF to tackle non-Gaussian noises.\\
\textbf{3) RKHS-UKF.} When the defender lacks prior knowledge of the state-transition and observation models, we propose RKHS-UKF for the defender. Further, it may be employed by both attacker and defender to infer the defender's state and attacker's state estimate, respectively. While RKHS-UKF adopts the online approximate EM of RKHS-EKF\cite{singh2022inverse_part2} to learn the system parameters, the expectations under non-linear transformations are computed using the unscented transform, thereby avoiding Jacobian computations. Our numerical experiments show that RKHS-UKF outperforms RKHS-EKF in terms of estimation accuracy.\\
\textbf{4) Stability of I-UKF and RKHS-UKF.} In general, stability and convergence results are difficult to obtain for non-linear KFs. A bounded non-linearity approach was employed in \cite{reif1999stochastic} to prove EKF's stability in the exponential mean-squared boundedness sense, but for bounded initial estimation error. The unknown matrix approach introduced in \cite{xiong2006performance_ukf} for UKF's stability relaxed the bound on the initial error by introducing unknown instrumental matrices to model the linearization errors but the measurements were still linear. Besides providing sufficient conditions for error boundedness, this approach also rigorously justifies enlarging the noise covariance matrices to stabilize the filter. In this paper, we provide the stability conditions for I-UKF and RKHS-UKF based on the unknown matrix approach. In the process, we also obtain hitherto unreported general stability results for forward UKF. We then prove that I-UKF's recursive estimates are conservative, i.e., the true error covariance is upper bounded by the I-UKF's estimate of the same. We validate the estimation performance of all inverse filters through extensive numerical experiments with \textit{recursive} Cram\'{e}r-Rao lower bound (RCRLB) \cite{tichavsky1998posterior} and non-credibility index (NCI) as the performance metrics.

The rest of the paper is organized as follows. The next section describes the system model for inverse cognition problem%. In Section~\ref{sec:IUKF}
, whereas Section~\ref{sec:IUKF} presents I-UKFs for systems with prior model information. We address the unknown system model case through RKHS-UKF in Section~\ref{sec:RKHS-UKF}. We provide the performance guarantees in Section~\ref{sec:stability} and demonstrate the proposed filters' performance via numerical experiments in Section~\ref{sec:numerical}. We conclude in Section~\ref{sec:summary}.

Throughout the paper, we reserve boldface lowercase and uppercase letters for vectors (column vectors) and matrices, respectively, and $\lbrace a_{i}\rbrace_{i_{1}\leq i\leq i_{2}}$ denotes a set of elements indexed by an integer $i$. The notation $[\mathbf{a}]_{i}$ is used to denote the $i$-th component of vector $\mathbf{a}$ and $[\mathbf{A}]_{i,j}$ denotes the $(i,j)$-th component of matrix $\mathbf{A}$, with $[\mathbf{A}]_{(i,:)}$ and $[\mathbf{A}]_{(:,j)}$, respectively, denoting the $i$-th row and $j$-th column of the matrix. Also, $[\mathbf{A}]_{(i_{1}:i_{2},j_{1}:j_{2})}$ represents the sub-matrix of $\mathbf{A}$ consisting of rows $i_{1}$ to $i_{2}$ and columns $j_{1}$ to $j_{2}$ while $[\mathbf{a}]_{i_{1}:i_{2}}$ denotes the corresponding sub-vector. The transpose/Hermitian operation is $(\cdot)^{T/H}$; the $l_{2}$ norm and norm with respect to matrix $\mathbf{A}$ of a vector are $\|\cdot\|_{2}$ and $\|\cdot\|_{\mathbf{A}}$, respectively; and the notation $\textrm{det}(\mathbf{A})$, $\textrm{Tr}(\mathbf{A})$ and $\|\mathbf{A}\|$, respectively, denote the determinant, trace and spectral norm of $\mathbf{A}$. For matrices $\mathbf{A}$ and $\mathbf{B}$, the inequality $\mathbf{A}\preceq\mathbf{B}$ means that $\mathbf{B}-\mathbf{A}$ is a positive semidefinite (p.s.d.) matrix. For a function $f:\mathbb{R}^{n}\rightarrow\mathbb{R}^{m}$, $\frac{\partial f}{\partial\mathbf{x}}$ denotes the $\mathbb{R}^{m\times n}$ Jacobian matrix with $\mathbb{R}^{m\times n}$ denoting the set of all real-valued $m\times n$ matrices, while for function $f:\mathbb{R}^{n}\rightarrow\mathbb{R}$, it denotes the $\mathbb{R}^{n\times 1}$ gradient vector with respect to vector $\mathbf{x}$. Also, $\mathbf{I}_{n}$ and $\mathbf{0}_{n\times m}$ denote a `$n\times n$' identity matrix and a `$n\times m$' all zero matrix, respectively. The Gaussian random variable is represented as $\mathbf{x} \sim \mathcal{N}(\boldsymbol{\mu},\mathbf{Q})$ with mean  $\boldsymbol{\mu}$ and covariance matrix $\mathbf{Q}$ while the covariance of random variable $\mathbf{x}$ is denoted by $\textrm{Cov}(\mathbf{x})$. We denote the Cholesky decomposition of matrix $\mathbf{A}$ as $\mathbf{A}=\sqrt{\mathbf{A}}\sqrt{\mathbf{A}}^{T}$.

\section{System Model}
\label{sec:sys_mod}
Consider a discrete-time stochastic dynamical system as the defender's state evolution process $\{\mathbf{x}_{k}\}_{k\geq 0}$, where $\mathbf{x}_{k}\in\mathbb{R}^{n_{x}\times 1}$ is the defender's state at the $k$-th time instant. The defender's state, perfectly known to the defender, evolves as
\par\noindent\small
\begin{align}
    \mathbf{x}_{k+1}=f(\mathbf{x}_{k})+\mathbf{w}_{k},\label{eqn:state transition x}
\end{align}
\normalsize
where process noise $\mathbf{w}_{k}\sim\mathcal{N}(\mathbf{0}_{n_{x}\times 1},\mathbf{Q})$ with covariance matrix $\mathbf{Q}\in\mathbb{R}^{n_{x}\times n_{x}}$. At the $k$-th time instant, the attacker observes defender's state as $\mathbf{y}_{k}\in\mathbb{R}^{n_{y}\times 1}$ given by
\par\noindent\small
\begin{align}
    \mathbf{y}_{k}=h(\mathbf{x}_{k})+\mathbf{v}_{k},\label{eqn:observation y}
\end{align}
\normalsize
where $\mathbf{v}_{k}\sim\mathcal{N}(\mathbf{0}_{n_{y}\times 1},\mathbf{R})$ is the attacker's measurement noise with covariance matrix $\mathbf{R}\in\mathbb{R}^{n_{y}\times n_{y}}$. The attacker computes an estimate $\hat{\mathbf{x}}_{k}$ of the defender's state $\mathbf{x}_{k}$ given its observations $\{\mathbf{y}_{j}\}_{1\leq j\leq k}$ using a (forward) stochastic filter. The attacker then takes an action whose noisy observation by the defender is $\mathbf{a}_{k}\in\mathbb{R}^{n_{a}\times 1}$ as
\par\noindent\small
\begin{align}
    \mathbf{a}_{k}=g(\hat{\mathbf{x}}_{k})+\bm{\epsilon}_{k},\label{eqn:observation a}
\end{align}
\normalsize
where $\bm{\epsilon}_{k}\sim\mathcal{N}(\mathbf{0}_{n_{a}\times 1},\bm{\Sigma}_{\epsilon})$ is the defender's measurement noise with covariance matrix $\bm{\Sigma}_{\epsilon}\in\mathbb{R}^{n_{a}\times n_{a}}$. In this context, the function $g(\cdot)$ represents the combined effect of the attacker's action strategy and the defender's observation. For example, inspired by linear quadratic Gaussian (LQG) control problems, I-KF\cite{krishnamurthy2019how} selects $\mathbf{a}_{k}$ based on a linear relationship with $\hat{\mathbf{x}}_{k}$, adjusted by its estimated error covariance matrix $\bm{\Sigma}_{k}$. However, we consider a general non-linear observation as defined in \eqref{eqn:observation a}. Finally, the defender uses $\{\mathbf{a}_{j},\mathbf{x}_{j}\}_{1\leq j\leq k}$ to compute an estimate $\doublehat{\mathbf{x}}_{k}$ of the attacker's estimate $\hat{\mathbf{x}}_{k}$ using the inverse stochastic filter. The noise processes $\{\mathbf{w}_{k}\}_{k\geq 0}$, $\{\mathbf{v}_{k}\}_{k\geq 1}$ and $\{\bm{\epsilon}_{k}\}_{k\geq 1}$ are mutually independent and identically distributed across time. To maintain simplicity, we do not assume time-varying noise covariances, while $f(\cdot)$, $h(\cdot)$, and $g(\cdot)$ are chosen as appropriate non-linear functions. In the following, we operate under the assumption that both the attacker and defender possess perfect knowledge of these functions and the noise distributions. Later, we examine the problem without this assumption of perfect system knowledge in Section~\ref{sec:RKHS-UKF}. Furthermore, the attacker is unaware that the defender is observing the former. The case when the attacker deliberately changes its actions to guard against the defender requires an inverse-inverse reinforcement learning-based representation of the problem, which has been recently addressed in \cite{pattanayak2022inverse,pattanayak2022meta}.

\section{Inverse UKF}\label{sec:IUKF}
The UKF generates a set of `$2n_{x}+1$' sigma points deterministically from the previous state estimate including the previous estimate itself as one of the sigma points. The sigma points are then propagated through the non-linear system model, and the state estimates are obtained as a weighted sum of these propagated points. In I-UKF, we assume that the attacker is employing a forward UKF to compute its estimate $\hat{\mathbf{x}}_{k}$ with known state transition \eqref{eqn:state transition x} and observation \eqref{eqn:observation y}. The I-UKF then infers the estimate $\doublehat{\mathbf{x}}_{k}$ of $\hat{\mathbf{x}}_{k}$ using observation \eqref{eqn:observation a}.

\vspace{-8pt}
\subsection{Forward UKF}
\label{subsec:forward UKF}
Consider the scaling parameter $\kappa\in\mathbb{R}$ controlling the spread of the forward UKF's sigma points around the previous estimate. The sigma points $\{\widetilde{\mathbf{x}}_{i}\}_{0\leq i\leq 2n_{x}}$ are generated from state estimate $\hat{\mathbf{x}}$ and its error covariance matrix $\bm{\Sigma}$ as
\par\noindent\small
\begin{align}
  &\{\widetilde{\mathbf{x}}_{i}\}_{0\leq i\leq 2n_{x}}=S_{gen}(\hat{\mathbf{x}},\bm{\Sigma})\nonumber\\
  &=\begin{cases}
  \hat{\mathbf{x}},\;\;\;\;\;\;\;i=0,\\
 \hat{\mathbf{x}}+\left[\sqrt{(n_{x}+\kappa)\bm{\Sigma}}\right]_{(:,i)},\;i=1,2,\hdots,n_{x}\\
    \hat{\mathbf{x}}-\left[\sqrt{(n_{x}+\kappa)\bm{\Sigma}}\right]_{(:,i-n_{x})},\;i=n_{x}+1,n_{x}+2,\hdots,2n_{x}\end{cases},\label{eqn:sigma points generation}
\end{align}
\normalsize
with their weights $\omega_{i}=\begin{cases}\frac{\kappa}{n_{x}+\kappa} & i=0\\\frac{1}{2(n_{x}+\kappa)} & i=1,2,\hdots,2n_{x}\end{cases}$.

Denote the sigma points generated and propagated for the time update at $k$-th time instant by $\lbrace\mathbf{s}_{i,k}\rbrace$ and $\lbrace\mathbf{s}^{*}_{i,k+1|k}\rbrace$, respectively. Similarly, $\lbrace\mathbf{q}_{i,k+1|k}\rbrace$ and $\lbrace\mathbf{q}^{*}_{i,k+1|k}\rbrace$ are the sigma points, respectively, generated and propagated to predict observation $\mathbf{y}_{k+1}$ as $\hat{\mathbf{y}}_{k+1|k}$. The attacker's forward UKF recursions to compute state estimate $\hat{\mathbf{x}}_{k+1}$ and the associated error covariance matrix estimate $\bm{\Sigma}_{k+1}$ are \cite{simon2006optimal}
\par\noindent\small
\begin{align}
    &\textrm{Time update:}\;\;\lbrace\mathbf{s}_{i,k}\rbrace_{0\leq i\leq 2n_{x}}=S_{gen}(\hat{\mathbf{x}}_{k},\bm{\Sigma}_{k}),\label{eqn:forward ukf prediction sigma points}\\
    &\mathbf{s}^{*}_{i,k+1|k}=f(\mathbf{s}_{i,k})\;\;\;\forall i=0,1,\hdots,2n_{x},\nonumber\\
    &\hat{\mathbf{x}}_{k+1|k}=\sum_{i=0}^{2n_{x}}\omega_{i}\mathbf{s}^{*}_{i,k+1|k},\label{eqn:forward ukf x predict}\\
    &\bm{\Sigma}_{k+1|k}=\sum_{i=0}^{2n_{x}}\omega_{i}\mathbf{s}^{*}_{i,k+1|k}(\mathbf{s}^{*}_{i,k+1|k})^{T}-\hat{\mathbf{x}}_{k+1|k}\hat{\mathbf{x}}_{k+1|k}^{T}+\mathbf{Q},\nonumber\\
    &\textrm{Measurement update:}\;\;\lbrace\mathbf{q}_{i,k+1|k}\rbrace_{0\leq i\leq 2n_{x}}=S_{gen}(\hat{\mathbf{x}}_{k+1|k},\bm{\Sigma}_{k+1|k}),\label{eqn:forward UKF update sigma points}\\
    &\mathbf{q}^{*}_{i,k+1|k}=h(\mathbf{q}_{i,k+1|k})\;\;\;\forall i=0,1,\hdots,2n_{x},\nonumber\\
    &\hat{\mathbf{y}}_{k+1|k}=\sum_{i=0}^{2n_{x}}\omega_{i}\mathbf{q}^{*}_{i,k+1|k},\label{eqn:forward ukf y predict}\\
    &\bm{\Sigma}^{y}_{k+1}=\sum_{i=0}^{2n_{x}}\omega_{i}\mathbf{q}^{*}_{i,k+1|k}(\mathbf{q}^{*}_{i,k+1|k})^{T}-\hat{\mathbf{y}}_{k+1|k}\hat{\mathbf{y}}_{k+1|k}^{T}+\mathbf{R},\nonumber\\
    &\bm{\Sigma}^{xy}_{k+1}=\sum_{i=0}^{2n_{x}}\omega_{i}\mathbf{q}_{i,k+1|k}(\mathbf{q}^{*}_{i,k+1|k})^{T}-\hat{\mathbf{x}}_{k+1|k}\hat{\mathbf{y}}_{k+1|k}^{T}\nonumber,\\
    &\hat{\mathbf{x}}_{k+1}=\hat{\mathbf{x}}_{k+1|k}+\mathbf{K}_{k+1}(\mathbf{y}_{k+1}-\hat{\mathbf{y}}_{k+1|k}),\label{eqn:forward ukf x update}\\
    &\bm{\Sigma}_{k+1}=\bm{\Sigma}_{k+1|k}-\mathbf{K}_{k+1}\bm{\Sigma}^{y}_{k+1}\mathbf{K}_{k+1}^{T},\label{eqn:forward UKF sigma update}
\end{align}
\normalsize
where gain matrix $\mathbf{K}_{k+1}=\bm{\Sigma}^{xy}_{k+1}\left(\bm{\Sigma}^{y}_{k+1}\right)^{-1}$. The step to generate the second set of sigma points may be omitted, and $\lbrace\mathbf{q}^{*}_{i,k+1|k}\rbrace$ may be obtained by propagating $\lbrace\mathbf{s}^{*}_{i,k+1|k}\rbrace$ through the observation function $h(\cdot)$ to save computational efforts. However, this may degrade the performance of the classical UKF because the effect of process noise here is unaccounted for. On the other hand, pure-propagation UKF \cite{straka2014design} generates a modified sigma-point set with increased covariance only once without compromising the performance.

\vspace{-8pt}
\subsection{I-UKF}
\label{subsec:IUKF}
Under the known forward UKF assumption, substituting \eqref{eqn:observation y}, \eqref{eqn:forward ukf x predict}, and \eqref{eqn:forward ukf y predict} in \eqref{eqn:forward ukf x update}, yields the inverse filter's state transition as
\par\noindent\small
\begin{align}
    \hat{\mathbf{x}}_{k+1}&=\sum_{i=0}^{2n_{x}}\omega_{i}\left(\mathbf{s}^{*}_{i,k+1|k}-\mathbf{K}_{k+1}\mathbf{q}^{*}_{i,k+1|k}\right)+\mathbf{K}_{k+1}h(\mathbf{x}_{k+1})\nonumber\\
    &\;\;+\mathbf{K}_{k+1}\mathbf{v}_{k+1}.\label{eqn: IUKF state transition detail}
\end{align}
\normalsize
In this state transition, $\mathbf{x}_{k+1}$ is a known exogenous input, while $\mathbf{v}_{k+1}$ represents the process noise involved. Since the functions $f(\cdot)$ and $h(\cdot)$ are known, the propagated sigma points $\{\mathbf{s}^{*}_{i,k+1|k}\}$ and $\{\mathbf{q}^{*}_{i,k+1|k}\}$, and the gain matrix $\mathbf{K}_{k+1}$ are functions of the first set of sigma points $\{\mathbf{s}_{i,k}\}$. These sigma points, in turn, are obtained deterministically from the previous state estimate $\hat{\mathbf{x}}_{k}$ and covariance matrix $\bm{\Sigma}_{k}$ using \eqref{eqn:forward ukf prediction sigma points}. Hence, I-UKF's state transition, \emph{under the assumption that parameter $\kappa$ is known to the defender}, is
\par\noindent\small
\begin{align}
    \hat{\mathbf{x}}_{k+1}=\widetilde{f}(\hat{\mathbf{x}}_{k},\bm{\Sigma}_{k},\mathbf{x}_{k+1},\mathbf{v}_{k+1}).\label{eqn:inverse ukf state transition}
\end{align}
\normalsize
Note that the process noise $\mathbf{v}_{k+1}$ is non-additive because $\mathbf{K}_{k+1}$ depends on the previous estimates. Furthermore, $\bm{\Sigma}_{k}$ does not depend on the current forward filter's observation $\mathbf{y}_{k}$ but is evaluated recursively using the previous estimate. We approximate $\bm{\Sigma}_{k}$ as $\bm{\Sigma}_{k}^{*}$ by computing the covariance matrix using its own previous estimate, i.e. $\doublehat{\mathbf{x}}_{k}$, in the same recursive manner as the forward filter estimates at its estimate $\hat{\mathbf{x}}_{k}$ (using \eqref{eqn:forward UKF sigma update} after computing gain matrix $\mathbf{K}_{k+1}$ from the generated sigma points). Hence, the inverse filter treats $\bm{\Sigma}_{k}$ as a known exogenous input in the state transition \eqref{eqn:inverse ukf state transition}.

Since the state transition \eqref{eqn:inverse ukf state transition} involves non-additive noise term, we consider an augmented state vector $\mathbf{z}_{k}=[\hat{\mathbf{x}}_{k}^{T},\mathbf{v}_{k+1}^{T}]^{T}$ of dimension $n_{z}=n_{x}+n_{y}$ for I-UKF formulation such that the state transition \eqref{eqn:inverse ukf state transition} becomes $\hat{\mathbf{x}}_{k+1}=\widetilde{f}(\mathbf{z}_{k},\bm{\Sigma}_{k},\mathbf{x}_{k+1})$. Denote
\par\noindent\small
\begin{align}
    \hat{\mathbf{z}}_{k}=[\doublehat{\mathbf{x}}_{k}^{T},\mathbf{0}_{1\times n_{y}}]^{T},\;\;\overline{\bm{\Sigma}}^{z}_{k}=\begin{bsmallmatrix}\overline{\bm{\Sigma}}_{k}&\mathbf{0}_{n_{x}\times n_{y}}\\\mathbf{0}_{n_{y}\times n_{x}}&\mathbf{R}\end{bsmallmatrix}.\label{eqn:IUKF z hat and sigma z}
\end{align}
\normalsize
Considering the I-UKF's scaling parameter as $\overline{\kappa}\in\mathbb{R}$, the sigma points $\{\overline{\mathbf{s}}_{j,k}\}_{0\leq j\leq 2n_{z}}$ are generated from $\hat{\mathbf{z}}_{k}$ and $\overline{\bm{\Sigma}}^{z}_{k}$ similar to \eqref{eqn:sigma points generation} with weights $\overline{\omega}_{j}$. I-UKF then computes $\doublehat{\mathbf{x}}_{k}$ and its associated error covariance matrix $\overline{\bm{\Sigma}}_{k}$ recursively as
\par\noindent\small
\begin{align}
    &\textit{Time update:}\nonumber\\
    &\overline{s}^{*}_{j,k+1|k}=\widetilde{f}(\overline{\mathbf{s}}_{j,k},\bm{\Sigma}_{k}^{*},\mathbf{x}_{k+1})\;\;\;\forall j=0,1,\hdots,2n_{z},\label{eqn:IUKF f propagate}\\   
    &\doublehat{\mathbf{x}}_{k+1|k}=\sum_{j=0}^{2n_{z}}\overline{\omega}_{j}\overline{\mathbf{s}}^{*}_{j,k+1|k},\label{eqn:IUKF x predict}\\
    &\overline{\bm{\Sigma}}_{k+1|k}=\sum_{j=0}^{2n_{z}}\overline{\omega}_{j}\overline{\mathbf{s}}^{*}_{j,k+1|k}(\overline{\mathbf{s}}^{*}_{j,k+1|k})^{T}-\doublehat{\mathbf{x}}_{k+1|k}\doublehat{\mathbf{x}}_{k+1|k}^{T},\label{eqn:IUKF sig predict}\\
    &\textit{Measurement update:}\nonumber\\
    &\mathbf{a}^{*}_{j,k+1|k}=g(\overline{\mathbf{s}}^{*}_{j,k+1|k})\;\;\;\forall j=0,1,\hdots,2n_{z},\label{eqn:IUKF g propagate}\\
    &\hat{\mathbf{a}}_{k+1|k}=\sum_{j=0}^{2n_{z}}\overline{\omega}_{j}\mathbf{a}^{*}_{j,k+1|k},\label{eqn:IUKF a predict}\\
    &\overline{\bm{\Sigma}}^{a}_{k+1}=\sum_{j=0}^{2n_{z}}\overline{\omega}_{j}\mathbf{a}^{*}_{j,k+1|k}(\mathbf{a}^{*}_{j,k+1|k})^{T}-\hat{\mathbf{a}}_{k+1|k}\hat{\mathbf{a}}_{k+1|k}^{T}+\bm{\Sigma}_{\epsilon},\label{eqn:IUKF sig a}\\
    &\overline{\bm{\Sigma}}^{xa}_{k+1}=\sum_{j=0}^{2n_{z}}\overline{\omega}_{j}\overline{\mathbf{s}}^{*}_{j,k+1|k}(\mathbf{a}^{*}_{j,k+1|k})^{T}-\doublehat{\mathbf{x}}_{k+1|k}\hat{\mathbf{a}}_{k+1|k}^{T},\label{eqn:IUKF cross cov}\\
     &\overline{\mathbf{K}}_{k+1}=\overline{\bm{\Sigma}}^{xa}_{k+1}\left(\overline{\bm{\Sigma}}^{a}_{k+1}\right)^{-1},\label{eqn:IUKF gain}\\
    &\doublehat{x}_{k+1}=\doublehat{x}_{k+1|k}+\overline{\mathbf{K}}_{k+1}(\mathbf{a}_{k+1}-\hat{\mathbf{a}}_{k+1|k}),\label{eqn:IUKF state update}\\
    &\overline{\bm{\Sigma}}_{k+1}=\overline{\bm{\Sigma}}_{k+1|k}-\overline{\mathbf{K}}_{k+1}\overline{\bm{\Sigma}}^{a}_{k+1}\overline{\mathbf{K}}_{k+1}^{T}.\label{eqn:IUKF covariance update}
\end{align}
\normalsize
%---------------------------------------------------------------------
\begin{figure}
  \centering
  \includegraphics[width = 1.0\columnwidth]{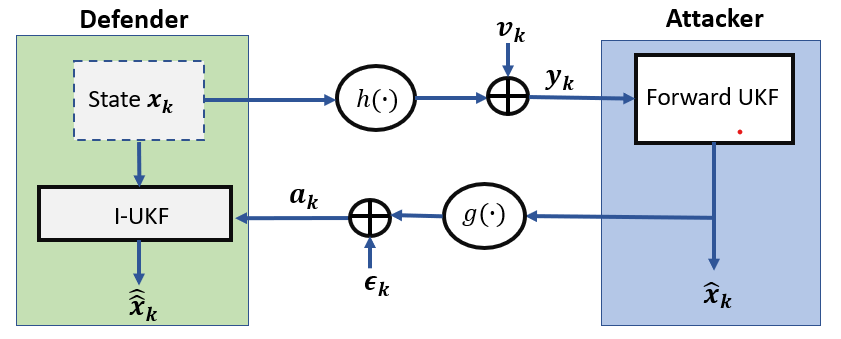}
  \caption{An illustration of I-UKF's recursion at $k$-th time step. The defender's true state at $k$-th time is $\mathbf{x}_{k}$, which the attacker observes as $\mathbf{y}_{k}$ through observation function $h(\cdot)$ with additive measurement noise $\mathbf{v}_{k}$. Forward UKF provides estimate $\hat{\mathbf{x}}_{k}$ of $\mathbf{x}_{k}$ using $\mathbf{y}_{k}$. The defender observes $\hat{\mathbf{x}}_{k}$ as $\mathbf{a}_{k}$ through observation function $g(\cdot)$ with additive measurement noise $\bm{\epsilon}_{k}$. Finally, with $\mathbf{a}_{k}$ and $\mathbf{x}_{k}$ as inputs, I-UKF computes estimate $\doublehat{\mathbf{x}}_{k}$ of $\hat{\mathbf{x}}_{k}$.}
 \label{fig:I-UKF's schematic}
\end{figure}
%-----------------------------------------------------

Fig.~\ref{fig:I-UKF's schematic} illustrates the I-UKF's recursions at $k$-th time step. These recursions follow from UKF's non-additive noise formulation\cite{wan2000unscented} with the sigma points generated in higher ($n_{z}=n_{x}+n_{y}$) dimensional state space than the $n_{x}$ dimensions in forward UKF. However, the latter requires a new set for the measurement update, while I-UKF generates these points only once. The defender's assumed observation \eqref{eqn:observation a} depends only on the state estimate $\hat{\mathbf{x}}_{k}$. However, in many applications, the attacker may also consider the estimated covariance $\bm{\Sigma}_{k}$ of $\hat{\mathbf{x}}_{k}$ in deciding its actions. In such cases, our proposed I-UKF can be trivially modified to include $\bm{\Sigma}_{k}$ in observations \eqref{eqn:observation a}. In particular, I-UKF's state transition \eqref{eqn:inverse ukf state transition} depends on forward UKF's $\bm{\Sigma}_{k}$ and hence, we compute the approximation $\bm{\Sigma}^{*}_{k}$. If observation \eqref{eqn:observation a} depends on $\bm{\Sigma}_{k}$, I-UKF's update step \eqref{eqn:IUKF g propagate} also uses $\bm{\Sigma}^{*}_{k}$. As mentioned earlier in Section~\ref{subsec:IUKF}, I-UKF computes $\bm{\Sigma}^{*}_{k}$ using its estimate $\doublehat{\mathbf{x}}_{k}$, in the same manner as the forward UKF computes $\bm{\Sigma}_{k}$ from its estimate $\hat{\mathbf{x}}_{k}$. In particular, we obtain $\mathbf{K}_{k+1}$ (and intermediately, $\bm{\Sigma}_{k+1|k}$ and $\bm{\Sigma}^{y}_{k+1}$) while propagating sigma-point $\overline{\mathbf{s}}_{j,k}$ through state transition \eqref{eqn:inverse ukf state transition} (equivalently, \eqref{eqn: IUKF state transition detail}). Hence, for each $\overline{\mathbf{s}}_{j,k}$, we obtain an estimate $\widetilde{\bm{\Sigma}}_{j,k+1}$ of forward UKF's covariance update $\bm{\Sigma}_{k+1}$ using \eqref{eqn:forward UKF sigma update}. We choose the average of these $\{\widetilde{\bm{\Sigma}}_{j,k+1}\}_{0\leq j\leq 2n_{z}}$ as the covariance approximation $\bm{\Sigma}^{*}_{k+1}$ for the next I-UKF recursion. 
 
\begin{remark}[Unknown $\kappa$]\label{remark:unknown kappa}
In the I-UKF formulation, we assumed that the parameter $\kappa$ of the forward UKF is known. In general, assuming a $\kappa$ different from the attacker's actual $\kappa$ may increase estimation errors in I-UKF. However, our numerical experiments in Section~\ref{sec:numerical} show that the I-UKF provides reasonable estimates even when assuming a different $\kappa$ from its true value. Further, the I-UKF's stability (Theorem~\ref{theorem:IUKF stability}) requires a stable forward UKF and is independent of the assumed $\kappa$. Note that the choice of forward UKF's $\kappa$ by the inverse filter is independent of its own control parameter $\overline{\kappa}$. Besides UKF, several numerical integration techniques-based SPKFs have also been developed in the literature. For example, cubature KF (CKF) \cite{arasaratnam2009cubature} and quadrature KF (QKF) \cite{ito2000gaussian,arasaratnam2007qkf} consider, respectively, the cubature and Gauss-Hermite quadrature rules. Our recent work \cite{singh2023inverse_ckf_qkf} proposed and analyzed these formulations for inverse CKF and QKF.
\end{remark}

\begin{remark}[Differences from I-KF and I-EKF]\label{remark:I-UKF with I-KF and I-EKF}
   Unlike I-KF \cite{krishnamurthy2019how} and I-EKF \cite{singh2022inverse_part1}, the forward gain matrix $\mathbf{K}_{k+1}$ is not treated as a time-varying parameter of I-UKF's state transition \eqref{eqn: IUKF state transition detail}. In KF, the gain matrix is fully deterministic given the model parameters and the initial covariance estimate $\bm{\Sigma}_{0}$. In EKF, it depends on the linearized model functions at the state estimates. However, the UKF gain matrix is computed from the covariance matrix estimates obtained as a weighted average of the generated sigma points, which are explicit functions of the state estimates. This prevents I-UKF from treating $\mathbf{K}_{k+1}$ as a parameter of \eqref{eqn: IUKF state transition detail}. 
\end{remark}
\begin{remark}[Non-Gaussian noise]\label{remark:non-Gaussian}
    UKF and hence, I-UKF assume Gaussian process and measurement noises. Recently, MCC-modified KFs have been developed to handle non-Gaussian noises\cite{izanloo2016kalman,liu2017maximum,wang2017maximum}. Our I-UKF can be trivially modified based on MCC for non-Gaussian system models. For instance, forward MCC-UKF proposed in \cite{wang2017maximum} introduces a scalar $l_{k+1}=G_{\sigma}(\|\mathbf{y}_{k+1}-\hat{\mathbf{y}}_{k+1|k}\|_{\widetilde{\mathbf{R}}_{k+1}^{-1}})$ with $G_{\sigma}(\cdot)$ as the Gaussian kernel and computes the gain matrix as $\mathbf{K}_{k+1}=\bm{\Sigma}_{k+1|k}l_{k+1}\widetilde{\mathbf{H}}_{k+1}^{T}(\widetilde{\mathbf{R}}_{k+1}+\widetilde{\mathbf{H}}_{k+1}\bm{\Sigma}_{k+1|k}l_{k+1}\widetilde{\mathbf{H}}_{k+1}^{T})^{-1}$. Here, $\widetilde{\mathbf{H}}_{k+1}$ and $\widetilde{\mathbf{R}}_{k+1}$, respectively, are the pseudo-measurement matrix and modified covariance matrix. The estimated error covariance then becomes $\bm{\Sigma}_{k+1}=(\mathbf{I}-\mathbf{K}_{k+1}\widetilde{\mathbf{H}}_{k+1})\bm{\Sigma}_{k+1|k}(\mathbf{I}-\mathbf{K}_{k+1}\widetilde{\mathbf{H}}_{k+1})^{T}+\mathbf{K}_{k+1}\widetilde{\mathbf{R}}_{k+1}\mathbf{K}_{k+1}^{T}$ while all other state prediction and update steps remain same as in forward UKF. These modifications need to be taken into account in the inverse filter's state transition equation while formulating the inverse filter. I-UKF's gain matrix $\overline{\mathbf{K}}_{k+1}$ and covariance estimate $\overline{\bm{\Sigma}}_{k+1}$ are also similarly modified using scalar $\overline{l}_{k+1}$ which is the counterpart of $l_{k+1}$ for the inverse filter's dynamics.
\end{remark}

\subsection{Continuous-time state evolution}\label{subsec:continuous-time}
The state-evolution \eqref{eqn:state transition x} and observations \eqref{eqn:observation y} represent discrete-time processes. In many practical applications, the defender's state evolves as a continuous-time process while the attacker observes the state at discrete-time instants. In such cases, (forward) continuous-discrete Kalman-Bucy filter \cite{kalman1961new} and its non-linear extensions \cite{jazwinski2007stochastic,kulikov2015accurate,sarkka2007unscented} are often employed for efficient state estimation. On the contrary, the inverse filtering problem still remains a discrete-time problem provided that the defender observes the attacker's actions (as $\mathbf{a}_{k}$) and estimates $\hat{\mathbf{x}}_{k}$ at the same discrete-time instants only. Our I-UKF can handle the continuous-time state evolution case with some trivial modifications. In particular, the forward continuous-discrete UKF's time update numerically integrates a pair of differential equations \cite[eq.~(34)]{sarkka2007unscented} to compute $\hat{\mathbf{x}}_{k+1|k}$ and $\bm{\Sigma}_{k+1|k}$ using estimates $\hat{\mathbf{x}}_{k}$ and $\bm{\Sigma}_{k}$ as the initial conditions. The measurement update steps are the same as in the forward UKF. I-UKF's state transition is modified to account for these differences. 

Denote the forward continuous-discrete UKF's time update (solutions of differential equations) as $\hat{\mathbf{x}}_{k+1|k}=\chi_{1}(\hat{\mathbf{x}}_{k})$ and $\bm{\Sigma}_{k+1|k}=\chi_{2}(\hat{\mathbf{x}}_{k},\bm{\Sigma}_{k})$. I-UKF's state transition \eqref{eqn: IUKF state transition detail} becomes
\par\noindent\small
\begin{align}
        \hat{\mathbf{x}}_{k+1}&=\chi_{1}(\hat{\mathbf{x}}_{1})-\sum_{i=0}^{2n_{x}}\omega_{i}\mathbf{K}_{k+1}\mathbf{q}^{*}_{i,k+1|k}+\mathbf{K}_{k+1}h(\mathbf{x}_{k+1})\nonumber\\
        &\;\;\;+\mathbf{K}_{k+1}\mathbf{v}_{k+1}.\label{eqn:state transition continuous discrete}
\end{align}
\normalsize
Here, the propagated points $\{\mathbf{q}^{*}_{i,k+1|k}\}$ are again obtained deterministically from the predicted state $\hat{\mathbf{x}}_{k+1|k}$ and covariance estimate $\bm{\Sigma}_{k+1|k}$, which in turn are functions of $\hat{\mathbf{x}}_{k}$ and $\bm{\Sigma}_{k}$ via solutions $\chi_{1}(\cdot)$ and $\chi_{2}(\cdot)$. Hence, \eqref{eqn:state transition continuous discrete} simply becomes $\hat{\mathbf{x}}_{k+1}=\widetilde{f}(\hat{\mathbf{x}}_{k},\bm{\Sigma}_{k},\mathbf{x}_{k+1},\mathbf{v}_{k+1})$ with $\widetilde{f}(\cdot)$ now denoting the modified state transition function.

\subsection{Complex-valued systems}\label{subsec:complex}
In many applications like frequency estimation and neural network training, the state $\mathbf{x}_{k}$ and its observations are not real but complex-valued such that complex KFs are employed\cite{dini2011widely,dini2012class}. While the simplest complex KFs assume second-order circularity and only use covariance matrix information, the recent widely linear filters\cite{dini2011widely,dini2012class,zhang2022unscented} consider general non-circular cases and enhance their accuracy using the pseudo-covariance matrix. Our I-UKF can be appropriately modified to obtain widely linear complex I-UKFs for complex-valued inverse filtering problems.

Consider the defender's state $\mathbf{x}_{k}\in\mathbb{C}^{n_{x}\times 1}$ with $\bm{\Sigma}_{k}$ and $\bm{\Sigma}^{p}_{k}$ denoting its covariance and pseudo-covariance matrices, respectively. In \cite{dini2011widely}, the forward widely linear complex UKF defines an augmented state $\bm{\xi}_{k}\doteq[\mathbf{x}_{k}^{T},\mathbf{x}^{H}_{k}]^{T}$ and its covariance matrix $\bm{\Sigma}^{\xi}_{k}\doteq\begin{bsmallmatrix}
    \bm{\Sigma}_{k}&\bm{\Sigma}^{p}_{k}\\
    (\bm{\Sigma}^{p}_{k})^{H}&\bm{\Sigma}_{k}^{H}
\end{bsmallmatrix}$. The forward filter recursions to estimate $\bm{\xi}_{k}$ then follow from the standard UKF with $(\cdot)^{T}$ replaced by $(\cdot)^{H}$. Note that the sigma points are then generated using estimate $\hat{\bm{\xi}}_{k}$ (of $\bm{\xi}_{k}$) and $\bm{\Sigma}^{\xi}_{k}$ which includes pseudo-covariance $\bm{\Sigma}^{p}_{k}$. While formulating the inverse filter, we need to consider the forward filter's augmented state $\bm{\xi}_{k}$ and modify the state transition \eqref{eqn: IUKF state transition detail}. Finally, \textit{mutatis mutandis}, the general complex I-UKF's recursions follow from the augmented complex UKF of \cite{dini2011widely} treating the I-UKF's modified state transition as the state evolution process and $\mathbf{a}_{k}$ as observations. On the other hand, the (forward) complex UKF proposed in \cite{zhang2022unscented} introduces modified sigma points and state updates using both innovation and its conjugate. These changes are similarly accommodated in the inverse filter's formulation to obtain an alternative complex I-UKF.

\section{RKHS-UKF}
\label{sec:RKHS-UKF}
In the previous section, we assumed perfect system information on both the attacker's and defender's sides to formulate the inverse filter. However, in many applications, the agent (attacker and/or defender) employing the stochastic filter may lack information about state evolution, observation, or both. Here, we develop RKHS-UKF to jointly estimate the desired state and learn the unknown system model. To this end, we consider an RKHS-based function approximation to represent the unknown non-linear functions\cite{aronszajn1950theory}. From the representer theorem \cite{scholkopf2001generalized}, the optimal approximation of a function $s(\cdot):\mathbb{R}^{n}\to\mathbb{R}$ in the RKHS induced by a kernel $K(\cdot,\cdot):\mathbb{R}^{n}\times\mathbb{R}^{n}\to\mathbb{R}$ takes the form $s(\cdot)\approx\sum_{i=1}^{M}a_{i}K(\widetilde{\mathbf{x}}_{i},\cdot)$ where $\{\widetilde{\mathbf{x}}_{i}\}_{1\leq i\leq M}$ are the $M$ input training samples or dictionary, and $\{a_{i}\}_{1\leq i\leq M}$ are the unknown coefficients to be learnt. This kernel function approximation has been used widely for non-linear state-space modeling \cite{tobar2015unsupervised,ralaivola2003dynamical} and recursive least-squares algorithms with unknown non-linear functions\cite{engel2004kernel,liu2009extended,van2006sliding}. Being a universal kernel \cite{steinwart2001influence}, a Gaussian kernel of kernel width $\sigma>0$ with $K(\mathbf{x}_{i},\mathbf{x}_{j})=\exp{\left(-\frac{\|\mathbf{x}_{i}-\mathbf{x}_{j}\|^{2}_{2}}{\sigma^2}\right)}$ is the most commonly used kernel for function approximation.

Consider a general non-linear system model with both state transition and observation models unknown to an agent. Our RKHS-UKF can be trivially simplified if the agent has perfect prior information about the state evolution, observation function, and/or noise covariance matrices. In particular, our RKHS-UKF couples the UKF to obtain state estimates with an approximate online EM algorithm - a popular choice to compute maximum likelihood estimates in the presence of missing data \cite{hajek2015random} - to learn the unknown system parameters. Further, the RKHS-UKF can be employed by both attacker and defender to infer, respectively, the defender's state (as a forward filter) and the attacker's state estimate (as an inverse filter). The inverse filters developed so far assumed a specific forward filter employed by the attacker to obtain their state transition equation. Since RKHS-UKF learns its state transition based on the available observations itself, we do not require any prior forward filter information to employ RKHS-UKF as the defender's inverse filter.

\textbf{System models for unknown dynamics:} We examine state transition \eqref{eqn:state transition x} and observation \eqref{eqn:observation y} when the functions $f(\cdot)$ and $h(\cdot)$, including the noise covariances $\mathbf{Q}$ and $\mathbf{R}$, are unknown. Consider a kernel function $K(\cdot,\cdot)$ and a dictionary $\{\widetilde{\mathbf{x}}_{l}\}_{1\leq l\leq L}$ of size $L$. Define $\bm{\Phi}(\mathbf{x})\doteq[K(\widetilde{\mathbf{x}}_{1},\mathbf{x}),\hdots,K(\widetilde{\mathbf{x}}_{L},\mathbf{x})]^{T}$. Using the kernel function approximation, the unknown state transition and observation are represented, respectively, as
\par\noindent\small
\begin{align}
    &\mathbf{x}_{k+1}=\mathbf{A}\bm{\Phi}(\mathbf{x}_{k})+\mathbf{w}_{k},\label{eqn:RKHS-UKF state transition approx}\\
    &\mathbf{y}_{k}=\mathbf{B}\bm{\Phi}(\mathbf{x}_{k})+\mathbf{v}_{k},\label{eqn:RKHS-UKF observation approx}
\end{align}
\normalsize
where $\mathbf{A}\in\mathbb{R}^{n_{x}\times L}$ and $\mathbf{B}\in\mathbb{R}^{n_{y}\times L}$ are the unknown coefficient matrices to be learnt. The dictionary $\{\widetilde{\mathbf{x}}_{l}\}_{1\leq l\leq L}$ may be formed using a sliding window\cite{van2006sliding} or approximate linear dependency (ALD)\cite{engel2004kernel} criterion. At the $k$-th time instant, RKHS-UKF estimates the unknown parameters $\Theta=\{\mathbf{A},\mathbf{B},\mathbf{Q},\mathbf{R}\}$ and current state $\mathbf{x}_{k}$ given observations $\{\mathbf{y}_{i}\}_{1\leq i\leq k}$. Note that the system model \eqref{eqn:state transition x}-\eqref{eqn:observation y} and hence, system parameters $\Theta$ are not time-varying.

RKHS-UKF's approximate online EM to learn the parameters $\Theta$ closely follows the RKHS-EKF's parameter learning steps detailed in \cite[Sec. V-A]{singh2022inverse_part2} and, hence, we only summarize them here. The key difference is that RKHS-EKF linearizes the non-linear kernel function to approximate the statistics of a Gaussian random variable under non-linear transformation, whereas the same approximation is performed by RKHS-UKF using the unscented transform. The UKF-based recursions then provide the state estimates using these parameter estimates. 

\textbf{Parameter learning:} Consider $\mathbf{Y}^{k}=\{\mathbf{y}_{j}\}_{1\leq j\leq k}$ as the observations upto time $k$ and $\hat{\Theta}_{k-1}=\{\hat{\mathbf{A}}_{k-1},\hat{\mathbf{B}}_{k-1},$ $\hat{\mathbf{Q}}_{k-1},\hat{\mathbf{R}}_{k-1}\}$ as the current estimate of $\Theta$ considering the previous $k-1$ observations. For simplicity, denote the conditional expectation operator $\mathbb{E}[\cdot|\mathbf{Y}^{k},\hat{\Theta}_{k-1}]$ by $\mathbb{E}_{k}[\cdot]$. Define the partial sums $\mathbf{S}^{x\phi}_{k}=\sum_{j=1}^{k}\mathbb{E}_{k}[\mathbf{x}_{j}\bm{\Phi}(\mathbf{x}_{j-1})^{T}]$ and $\mathbf{S}^{\phi 1}_{k}=\sum_{j=1}^{k}\mathbb{E}_{k}[\bm{\Phi}(\mathbf{x}_{j-1})\bm{\Phi}(\mathbf{x}_{j-1})^{T}]$; and approximate them as $\mathbf{S}^{x\phi}_{k}\approx\mathbf{S}^{x\phi}_{k-1}+\mathbb{E}_{k}[\mathbf{x}_{k}\bm{\Phi}(\mathbf{x}_{k-1})^{T}]$ and $\mathbf{S}^{\phi 1}_{k}\approx\mathbf{S}^{\phi 1}_{k-1}+\mathbb{E}_{k}[\bm{\Phi}(\mathbf{x}_{k-1})\bm{\Phi}(\mathbf{x}_{k-1})^{T}]$. In the approximate online EM, the current observation $\mathbf{y}_{k}$ and parameter estimate $\hat{\Theta}_{k-1}$ are used only to compute the expectations $\mathbb{E}_{k}[\mathbf{x}_{k}\bm{\Phi}(\mathbf{x}_{k-1})^{T}]$ and $\mathbb{E}_{k}[\bm{\Phi}(\mathbf{x}_{k-1})\bm{\Phi}(\mathbf{x}_{k-1})^{T}]$ and not to update $\mathbf{S}^{x\phi}_{k-1}$ and $\mathbf{S}^{\phi 1}_{k-1}$. With this approximation, the current observation is considered only once to update the parameter estimates. Similarly, we define the sums $\mathbf{S}^{y\phi}_{k}=\sum_{j=1}^{k}\mathbb{E}_{k}[\mathbf{y}_{j}\bm{\Phi}(\mathbf{x}_{j})^{T}]$ and $\mathbf{S}^{\phi}_{k}=\sum_{j=1}^{k}\mathbb{E}_{k}[\bm{\Phi}(\mathbf{x}_{j})\bm{\Phi}(\mathbf{x}_{j})^{T}]$ which are evaluated, respectively, as $\mathbf{S}^{y\phi}_{k}=\mathbf{S}^{y\phi}_{k-1}+\mathbb{E}_{k}[\mathbf{y}_{k}\bm{\Phi}(\mathbf{x}_{k})^{T}]$ and $\mathbf{S}^{\phi}_{k}=\mathbf{S}^{\phi}_{k-1}+\mathbb{E}_{k}[\bm{\Phi}(\mathbf{x}_{k})\bm{\Phi}(\mathbf{x}_{k})^{T}]$. Further, using \eqref{eqn:RKHS-UKF observation approx}, we have $\mathbb{E}_{k}[\mathbf{y}_{k}\bm{\Phi}(\mathbf{x}_{k})^{T}]=\hat{\mathbf{B}}_{k}\mathbb{E}_{k}[\bm{\Phi}(\mathbf{x}_{k})\bm{\Phi}(\mathbf{x}_{k})^{T}]$ and $\mathbb{E}_{k}[\mathbf{y}_{k}\mathbf{y}_{k}^{T}]=\hat{\mathbf{B}}_{k}\mathbb{E}_{k}[\bm{\Phi}(\mathbf{x}_{k})\bm{\Phi}(\mathbf{x}_{k})^{T}]\hat{\mathbf{B}}_{k}^{T}+\hat{\mathbf{R}}_{k-1}$. Finally, the approximate parameter updates are
\par\noindent\small
\begin{align}
    &\hat{\mathbf{A}}_{k}=\Gamma_{a}(\mathbf{S}^{x\phi}_{k}(\mathbf{S}^{\phi 1}_{k})^{-1}),\label{eqn:RKHS-UKF A estimate}\\
    &\hat{\mathbf{Q}}_{k}=\left(1-\frac{1}{k}\right)\hat{\mathbf{Q}}_{k-1}+\frac{1}{k}(\mathbb{E}_{k}[\mathbf{x}_{k}\mathbf{x}_{k}^{T}]-\hat{\mathbf{A}}_{k}\mathbb{E}_{k}[\bm{\Phi}(\mathbf{x}_{k-1})\mathbf{x}_{k}^{T}]\nonumber\\
    &-\mathbb{E}_{k}[\mathbf{x}_{k}\bm{\Phi}(\mathbf{x}_{k-1})^{T}]\hat{\mathbf{A}}_{k}^{T}+\hat{\mathbf{A}}_{k}\mathbb{E}_{k}[\bm{\Phi}(\mathbf{x}_{k-1})\bm{\Phi}(\mathbf{x}_{k-1})^{T}]\hat{\mathbf{A}}_{k}^{T}),\label{eqn:RKHS-UKF Q estimate}\\
    &\hat{\mathbf{B}}_{k}=\Gamma_{b}(\mathbf{S}^{y\phi}_{k}(\mathbf{S}^{\phi}_{k})^{-1}),\label{eqn:RKHS-UKF B estimate}\\
    &\hat{\mathbf{R}}_{k}=\left(1-\frac{1}{k}\right)\hat{\mathbf{R}}_{k-1}+\frac{1}{k}(\mathbb{E}_{k}[\mathbf{y}_{k}\mathbf{y}_{k}^{T}]-\hat{\mathbf{B}}_{k}\mathbb{E}_{k}[\bm{\Phi}(\mathbf{x}_{k})\mathbf{y}_{k}^{T}]\nonumber\\
    &-\mathbb{E}_{k}[\mathbf{y}_{k}\bm{\Phi}(\mathbf{x}_{k})^{T}]\hat{\mathbf{B}}_{k}^{T}+\hat{\mathbf{B}}_{k}\mathbb{E}_{k}[\bm{\Phi}(\mathbf{x}_{k})\bm{\Phi}(\mathbf{x}_{k})^{T}]\hat{\mathbf{B}}_{k}^{T}),\label{eqn:RKHS-UKF R estimate}
\end{align}
\normalsize
where $\Gamma_{a}(\cdot)$ and $\Gamma_{b}(\cdot)$ denote respectively the projection of the coefficient matrices estimates to satisfy known bounds. Note that this projection is optional and used only to analyze the stability of the proposed RKHS-UKF algorithm in Section~\ref{sec:stability}.

To compute the required conditional expectations, we use the state estimates from the UKF recursions themselves, which approximate the posterior distribution of the required states given the observations by a Gaussian density. However, the expectations involve a non-linear transformation $\bm{\Phi}(\cdot)$. Based on the unscented transform, we generate the deterministic sigma points to approximate the statistics of a Gaussian distribution under non-linear transformation. Further, we need the statistics of $\bm{\Phi}(\mathbf{x}_{k-1})$ given $\mathbf{Y}^{k}$. Hence, an augmented state $\mathbf{z}_{k}=[\mathbf{x}_{k}^{T}\;\mathbf{x}_{k-1}^{T}]^{T}$ is considered in the RKHS-UKF formulation. Using these approximations, the required expectations are then computed as detailed below.

\textbf{Recursion:} In terms of the augmented state $\mathbf{z}_{k}$, the kernel approximated system model is
\par\noindent\small
\begin{align}
    &\mathbf{z}_{k}=\widetilde{f}(\mathbf{z}_{k-1})+\widetilde{\mathbf{w}}_{k-1},\label{eqn:RKHS-UKF state transition aug}\\
    &\mathbf{y}_{k}=\widetilde{h}(\mathbf{z}_{k})+\mathbf{v}_{k},\label{eqn:RKHS-UKF observation aug}
\end{align}
\normalsize
where $\widetilde{f}(\mathbf{z}_{k-1})=[(\mathbf{A}\bm{\Phi}(\mathbf{x}_{k-1}))^{T}\;\mathbf{x}_{k-1}^{T}]^{T}$ and $\widetilde{h}(\mathbf{z}_{k})=\mathbf{B}\bm{\Phi}(\mathbf{x}_{k})$ with process noise $\widetilde{\mathbf{w}}_{k-1}=[\mathbf{w}_{k-1}^{T}\;\mathbf{0}_{1\times n_{x}}]^{T}$ of actual noise covariance matrix $\widetilde{\mathbf{Q}}=\begin{bsmallmatrix}\mathbf{Q}&\mathbf{0}_{n_{x}\times n_{x}}\\\mathbf{0}_{n_{x}\times n_{x}}&\mathbf{0}_{n_{x}\times n_{x}}\end{bsmallmatrix}$. Denote $\kappa$ as the scaling parameter for RKHS-UKF's sigma points. Note that these sigma points are generated in the augmented state-space with dimension $n_{z}=2n_{x}$. At $k$-th recursion, we have an estimate $\hat{\mathbf{z}}_{k-1}=[\hat{\mathbf{x}}_{k-1|k-1}^{T}\;\hat{\mathbf{x}}_{k-2|k-1}^{T}]^{T}$, its associated error covariance matrix $\bm{\Sigma}^{z}_{k-1}$ and the parameter estimate $\hat{\Theta}_{k-1}$ from the previous time step. The current state and parameter estimates are then obtained using the new observation $\mathbf{y}_{k}$ as summarized below.\\
\textit{1) Prediction:} Generate the sigma points $\{\mathbf{s}_{i,k-1}\}_{0\leq i\leq 2n_{z}}=S_{gen}(\hat{\mathbf{z}}_{k-1},\bm{\Sigma}^{z}_{k-1})$ with their corresponding weights $\{\omega_{i}\}_{0\leq i\leq 2n_{z}}$ similar to \eqref{eqn:sigma points generation}. Using $\mathbf{A}=\hat{\mathbf{A}}_{k-1}$ and $\widetilde{\mathbf{Q}}_{k-1}=\begin{bsmallmatrix}\hat{\mathbf{Q}}_{k-1}&\mathbf{0}_{n_{x}\times n_{x}}\\\mathbf{0}_{n_{x}\times n_{x}}&\mathbf{0}_{n_{x}\times n_{x}}\end{bsmallmatrix}$ in \eqref{eqn:RKHS-UKF state transition aug}, propagate $\{\mathbf{s}_{i,k-1}\}$ through $\widetilde{f}(\cdot)$ to obtain the predicted state and associated prediction error covariance matrix as
\par\noindent\small
\begin{align}
    &\mathbf{s}^{*}_{i,k|k-1}=\widetilde{f}(\mathbf{s}_{i,k-1})\;\;\forall\; i=0,1,\hdots,2n_{z},\label{eqn:RKHS-UKF prediction propagate}\\  
    &\hat{\mathbf{z}}_{k|k-1}=\sum_{i=0}^{2n_{z}}\omega_{i}\mathbf{s}^{*}_{i,k|k-1},\label{eqn:RKHS-UKF predicted state}\\
    &\bm{\Sigma}^{z}_{k|k-1}=\sum_{i=0}^{2n_{z}}\omega_{i}\mathbf{s}^{*}_{i,k|k-1}(\mathbf{s}^{*}_{i,k|k-1})^{T}-\hat{\mathbf{z}}_{k|k-1}\hat{\mathbf{z}}_{k|k-1}^{T}+\widetilde{\mathbf{Q}}_{k-1}.\label{eqn:RKHS-UKF sig predict}
\end{align}
\normalsize
\textit{2) State update:} Generate the sigma points  $\{\mathbf{q}_{i,k|k-1}\}_{0\leq i\leq 2n_{z}}\\=S_{gen}(\hat{\mathbf{z}}_{k|k-1},\bm{\Sigma}^{z}_{k|k-1})$ and propagate through $\widetilde{h}(\cdot)$ using $\mathbf{B}=\hat{\mathbf{B}}_{k-1}$ and $\mathbf{R}=\hat{\mathbf{R}}_{k-1}$ in \eqref{eqn:RKHS-UKF observation aug} as
\par\noindent\small
\begin{align}
    &\mathbf{q}^{*}_{i,k|k-1}=\widetilde{h}(\mathbf{q}_{i,k|k-1})\;\;\forall\; i=0,1,\hdots,2n_{z},\label{eqn:RKHS-UKF state update propagate}\\
    &\hat{\mathbf{y}}_{k|k-1}=\sum_{i=0}^{2n_{z}}\omega_{i}\mathbf{q}^{*}_{i,k|k-1},\label{eqn:RKHS-UKF predicted y}\\
    &\bm{\Sigma}^{y}_{k}=\sum_{i=0}^{2n_{z}}\omega_{i}\mathbf{q}^{*}_{i,k|k-1}(\mathbf{q}^{*}_{i,k|k-1})^{T}-\hat{\mathbf{y}}_{k|k-1}\hat{\mathbf{y}}_{k|k-1}^{T}+\mathbf{R},\label{eqn:RKHS-UKF sig y predict}\\
    &\bm{\Sigma}^{zy}_{k}=\sum_{i=0}^{2n_{z}}\omega_{i}\mathbf{q}_{i,k|k-1}(\mathbf{q}^{*}_{i,k|k-1})^{T}-\hat{\mathbf{z}}_{k|k-1}\hat{\mathbf{y}}_{k|k-1}^{T},\label{eqn:RKHS-UKF sig zy predict}\\
    &\hat{\mathbf{z}}_{k}=\hat{\mathbf{z}}_{k|k-1}+\mathbf{K}_{k}(\mathbf{y}_{k}-\hat{\mathbf{y}}_{k|k-1}),\label{eqn:RKHS-UKF state update}\\
     &\bm{\Sigma}^{z}_{k}=\bm{\Sigma}^{z}_{k|k-1}-\mathbf{K}_{k}\bm{\Sigma}^{y}_{k}\mathbf{K}_{k}^{T},\label{eqn:RKHS-UKF sig update}
\end{align}
\normalsize
where gain matrix $\mathbf{K}_{k}=\bm{\Sigma}^{zy}_{k}(\bm{\Sigma}^{y}_{k})^{-1}$. Here, $\hat{\mathbf{z}}_{k}=[\hat{\mathbf{x}}_{k|k}^{T}\;\hat{\mathbf{x}}_{k-1|k}^{T}]$ where $\hat{\mathbf{x}}_{k|k}$ is the RKHS-UKF's estimate of state $\mathbf{x}_{k}$. These prediction and state updates follow from the standard UKF recursions to estimate $\mathbf{z}_{k}$ with system model \eqref{eqn:RKHS-UKF state transition aug} and \eqref{eqn:RKHS-UKF observation aug} given parameters $\hat{\Theta}_{k-1}$.\\
\textit{3) Parameters update:} Generate sigma points $\{\mathbf{s}_{i,k}\}_{0\leq i\leq 2n_{z}}\\=S_{gen}(\hat{\mathbf{z}}_{k},\bm{\Sigma}^{z}_{k})$. Denote the two $n_{x}$-dimensional sub-vectors of the $i$-th sigma point, respectively, as $\mathbf{s}_{i,k}^{(1)}=[\mathbf{s}_{i,k}]_{1:n_{x}}$ and $\mathbf{s}_{i,k}^{(2)}=[\mathbf{s}_{i,k}]_{n_{x}+1:2n_{x}}$ corresponding to $\mathbf{x}_{k}$ and $\mathbf{x}_{k-1}$ parts of the augmented state $\mathbf{z}_{k}$. In order to approximate the statistics of $\bm{\Phi}(\mathbf{x}_{k-1})$ given observations $\mathbf{Y}^{k}$, we propagate the $\mathbf{x}_{k-1}$ part of the sigma points through $\bm{\Phi}(\cdot)$, i.e., $\widetilde{\mathbf{s}}_{i,k}^{(2)}=\bm{\Phi}(\mathbf{s}_{i,k}^{(2)})$. Similarly, we propagate $\mathbf{s}_{i,k}^{(1)}$ through $\bm{\Phi}(\cdot)$ to approximate the statistics of $\bm{\Phi}(\mathbf{x}_{k})$ as $\widetilde{\mathbf{s}}_{i,k}^{(1)}=\bm{\Phi}(\mathbf{s}_{i,k}^{(1)})$. Also, by definition $\textrm{Cov}(\mathbf{x}_{k}-\hat{\mathbf{x}}_{k|k})\approx[\bm{\Sigma}^{z}_{k}]_{(1:n_{x},1:n_{x})}$. With these approximations, the various expectations are computed as
\par\noindent\small
\begin{align}
    &\mathbb{E}_{k}[\mathbf{x}_{k}\mathbf{x}_{k}^{T}]=[\bm{\Sigma}^{z}_{k}]_{(1:n_{x},1:n_{x})}+\hat{\mathbf{x}}_{k|k}\hat{\mathbf{x}}_{k|k}^{T},\label{eqn:RKHS-UKF expect xx}\\
     &\mathbb{E}_{k}[\bm{\Phi}(\mathbf{x}_{k-1})\bm{\Phi}(\mathbf{x}_{k-1})^{T}]=\sum_{i=0}^{2n_{z}}\omega_{i}\widetilde{\mathbf{s}}_{i,k}^{(2)}(\widetilde{\mathbf{s}}_{i,k}^{(2)})^{T},\label{eqn:RKHS-UKF expect phi1}\\
     &\mathbb{E}_{k}[\mathbf{x}_{k}\bm{\Phi}(\mathbf{x}_{k-1})^{T}]=\sum_{i=0}^{2n_{z}}\omega_{i}\mathbf{s}_{i,k}^{(1)}(\widetilde{\mathbf{s}}_{i,k}^{(2)})^{T},\label{eqn:RKHS-UKF expect xphi}\\
     &\mathbb{E}_{k}[\bm{\Phi}(\mathbf{x}_{k})\bm{\Phi}(\mathbf{x}_{k})^{T}]=\sum_{i=0}^{2n_{z}}\omega_{i}\widetilde{\mathbf{s}}_{i,k}^{(1)}(\widetilde{\mathbf{s}}_{i,k}^{(1)})^{T}.\label{eqn:RKHS-UKF expect phi}
\end{align}
\normalsize
Using these expectations, the updated parameters estimate $\hat{\Theta}_{k}$ is obtained using \eqref{eqn:RKHS-UKF A estimate}-\eqref{eqn:RKHS-UKF R estimate}.

Finally, the dictionary $\{\widetilde{\mathbf{x}}_{l}\}_{1\leq l\leq L}$ is updated using the current state estimate $\hat{\mathbf{x}}_{k|k}$ based on the sliding window\cite{van2006sliding} or ALD\cite{engel2004kernel} criterion. Note that in our RKHS-UKF, we need to generate only two sets of sigma points per recursion, similar to the standard UKF with a known system model. The sigma points generated in the parameters update step are the same as that obtained in the prediction step at the next time instant. Algorithm~\ref{alg:RKHS-UKF initialization} describes the initialization of the RKHS-UKF assuming initial state $\mathbf{x}_{0}\sim\mathcal{N}(\hat{\mathbf{x}}_{0},\bm{\Sigma}_{0})$, while Algorithm~\ref{alg:RKHS-UKF recursion} summarizes the RKHS-UKF recursions. In the ALD criterion, the dictionary size increases if the current estimate $\hat{\mathbf{x}}_{k|k}$ is added to the dictionary, while in the sliding window criterion, the dictionary size changes only during the initial phase when the current size $L$ is less than the window length considered. Algorithms~\ref{alg:RKHS-UKF initialization} and \ref{alg:RKHS-UKF recursion} summarize all the above-mentioned steps.

The usage of the unscented transform implies that the scaling parameter $\kappa$ also impacts the parameter learning in RKHS-UKF. Alternatively, efficient numerical integration techniques can be employed and RKHS-CKF (RKHS-QKF) can be trivially obtained from the developed RKHS-UKF. In \cite{haykin2004kalman,zhan2006neural}, UKF has been coupled with neural networks (NNs) for non-linear system identification with the unknown weights learned by augmenting them with the state, which is computationally expensive. In \cite{kallapur2008ukf}, NN is used to model the non-linear dynamics or uncertainties with the weights updated online using previous UKF-computed state estimates. However, in general, the stability of these algorithms can not be assured\cite{haykin2004kalman}. Contrarily, our RKHS-UKF extends the recursive Bayesian state estimation framework to unknown systems by including an additional parameter update step using the expectations from UKF recursion itself. Hence, we are able to analyze its stochastic stability using the unknown matrix approach\cite{xiong2006performance_ukf} in Section~\ref{subsec:RKHS-UKF stability}. Such stability guarantees have not been derived for RKHS-EKF\cite{singh2022inverse_part2} as well.

\begin{remark}[Difference from kernel KFs]\label{remark:kernel KF}
    Kernel-based KFs (KKFs) have been previously proposed in the literature for unknown non-linear time series prediction\cite{ralaivola2005time,zhu2013learning,dang2019kernel}. However, our RKHS-UKF is fundamentally different from these KKFs. In \cite{ralaivola2005time}, the unknown system model is transformed to an RKHS-based feature space $\mathcal{H}$, wherein it is assumed to be linear Gaussian such that KF-based filtering recursions are applicable. The unknown parameters are represented using an orthonormal basis in $\mathcal{H}$ and learned from an exact EM algorithm. The kernel principal component analysis (PCA) provides the orthonormal basis. KKFs in \cite{zhu2013learning,dang2019kernel} consider conditional distribution embedding of the noisy but linear observation $\mathbf{y}_{k}$ to construct a new state-space model in RKHS. KF recursions are then applied to estimate the embedded observation, from which the estimate $\hat{\mathbf{x}}_{k}$ is computed. On the contrary, our RKHS-UKF employs the kernel function approximation (with a universal Gaussian kernel) to directly represent the unknown functions in the original state space without any feature mapping. Furthermore, these KKFs involve a prior training phase to learn the orthonormal basis and the embedding operator in \cite{ralaivola2005time} and \cite{zhu2013learning,dang2019kernel}, respectively. Hence, KKFs suffer if the training data does not adequately represent the test data. On the contrary, our RKHS-UKF learns its dictionary online (from the computed estimates) based on the ALD or sliding window criterion.
\end{remark}

%-------------------------------------
\begin{algorithm}[t]
	\caption{RKHS-UKF initialization}
	\label{alg:RKHS-UKF initialization}
    \begin{algorithmic}[1]
    \Statex \textbf{Input:} $\hat{\mathbf{x}}_{0}$, $\bm{\Sigma}_{0}$, $\kappa$
    \Statex \textbf{Output:} $\{\mathbf{s}_{i,0}\}_{0\leq i\leq 2n_{z}}$, $\{\omega_{i}\}_{0\leq i\leq 2n_{z}}$, $\bm{\Sigma}^{z}_{0}$, $L$, $\{\widetilde{x}_{l}\}_{1\leq l\leq L}$, $\hat{\mathbf{A}}_{0}$, $\hat{\mathbf{B}}_{0}$, $\hat{\mathbf{Q}}_{0}$, $\hat{\mathbf{R}}_{0}$, $\mathbf{S}^{x\phi}_{0}$, $\mathbf{S}^{\phi 1}_{0}$, $\mathbf{S}^{y\phi}_{0}$ and $\mathbf{S}^{\phi}_{0}$
\State $\hat{\mathbf{z}}_{0}\gets [\hat{\mathbf{x}}_{0}^{T}\;\;\hat{\mathbf{x}}_{0}^{T}]^{T}$, and $\bm{\Sigma}^{z}_{0}\gets\begin{bmatrix}\bm{\Sigma}_{0}&\mathbf{0}_{n_{x}\times n_{x}}\\\mathbf{0}_{n_{x}\times n_{x}}&\bm{\Sigma}_{0}\end{bmatrix}$.
\State $\{\mathbf{s}_{i,0},\omega_{i}\}_{0\leq i\leq 2n_{z}}\gets S_{gen}(\hat{\mathbf{z}}_{0},\bm{\Sigma}^{z}_{0})$ using \eqref{eqn:sigma points generation} and $\kappa$.
\State Set $L=1$ and $\widetilde{\mathbf{x}}_{1}\gets\hat{\mathbf{x}}_{0}$.
\State Initialize $\hat{\mathbf{A}}_{0}$ and $\hat{\mathbf{B}}_{0}$ with arbitrary $\mathbb{R}_{n_{x}\times L}$ and $\mathbb{R}_{n_{y}\times L}$ matrices, respectively.
\State Initialize $\hat{\mathbf{Q}}_{0}$ and $\hat{\mathbf{R}}_{0}$ with some suitable positive definite noise covariance matrices.
\State Set $\mathbf{S}^{x\phi}_{0}=\mathbf{0}_{n_{x}\times L}$, $\mathbf{S}^{\phi 1}_{0}=\mathbf{0}_{L\times L}$, $\mathbf{S}^{y\phi}_{0}=\mathbf{0}_{n_{y}\times L}$ and $\mathbf{S}^{\phi}_{0}=\mathbf{0}_{L\times L}$.

\Statex \Return $\{\mathbf{s}_{i,0}\}_{0\leq i\leq 2n_{z}}$, $\{\omega_{i}\}_{0\leq i\leq 2n_{z}}$, $\bm{\Sigma}^{z}_{0}$, $L$, $\{\widetilde{x}_{l}\}_{1\leq l\leq L}$, $\hat{\mathbf{A}}_{0}$, $\hat{\mathbf{B}}_{0}$, $\hat{\mathbf{Q}}_{0}$, $\hat{\mathbf{R}}_{0}$, $\mathbf{S}^{x\phi}_{0}$, $\mathbf{S}^{\phi 1}_{0}$, $\mathbf{S}^{y\phi}_{0}$ and $\mathbf{S}^{\phi}_{0}$.

    \end{algorithmic}
\end{algorithm}
%-------------------------------------------------
%-------------------------------------------------
\begin{algorithm}[ht]
	\caption{RKHS-UKF recursion}
	\label{alg:RKHS-UKF recursion}
    \begin{algorithmic}[1]
    \Statex \textbf{Input:} $\{\mathbf{s}_{i,k-1},\omega_{i}\}_{0\leq i\leq 2n_{z}}$, $\bm{\Sigma}^{z}_{k-1}$, $\hat{\mathbf{A}}_{k-1}$, $\hat{\mathbf{B}}_{k-1}$, $\hat{\mathbf{Q}}_{k-1}$, $\hat{\mathbf{R}}_{k-1}$, $\mathbf{S}^{x\phi}_{k-1}$, $\mathbf{S}^{\phi 1}_{k-1}$, $\mathbf{S}^{y\phi}_{k-1}$, $\mathbf{S}^{\phi}_{k-1}$, and $\mathbf{y}_{k}$
    \Statex \textbf{Output:} $\hat{\mathbf{x}}_{k|k}$, $\{\mathbf{s}_{i,k}\}_{0\leq i\leq 2n_{z}}$, $\bm{\Sigma}^{z}_{k}$, $\hat{\mathbf{A}}_{k}$, $\hat{\mathbf{B}}_{k}$, $\hat{\mathbf{Q}}_{k}$, $\hat{\mathbf{R}}_{k}$, $\mathbf{S}^{x\phi}_{k}$, $\mathbf{S}^{\phi 1}_{k}$, $\mathbf{S}^{y\phi}_{k}$, and $\mathbf{S}^{\phi}_{k}$
\State Propagate $\{\mathbf{s}_{i,k-1}\}$ through $\widetilde{f}(\cdot)$ and compute $\hat{\mathbf{z}}_{k|k-1}$ and $\bm{\Sigma}^{z}_{k|k-1}$ using \eqref{eqn:RKHS-UKF prediction propagate}-\eqref{eqn:RKHS-UKF sig predict}.
\State Generate sigma-points $\{\mathbf{q}_{i,k|k-1}\}_{0\leq i\leq 2n_{z}}=S_{gen}(\hat{\mathbf{z}}_{k|k-1},\bm{\Sigma}^{z}_{k|k-1})$ using \eqref{eqn:sigma points generation} and propagate through $\widetilde{h}(\cdot)$ using \eqref{eqn:RKHS-UKF state update propagate}.
\State Compute $\hat{\mathbf{z}}_{k}$ and $\bm{\Sigma}^{z}_{k}$ using \eqref{eqn:RKHS-UKF predicted y}-\eqref{eqn:RKHS-UKF sig update}.
\State $\hat{\mathbf{x}}_{k|k}\gets[\hat{\mathbf{z}}_{k}]_{1:n_{x}}$.
\State Generate sigma-points $\{\mathbf{s}_{i,k}\}_{0\leq i\leq 2n_{z}}=S_{gen}(\hat{\mathbf{z}}_{k},\bm{\Sigma}^{z}_{k})$ using \eqref{eqn:sigma points generation}.
\State Set $\mathbf{s}_{i,k}^{(1)}=[\mathbf{s}_{i,k}]_{1:n_{x}}$ and $\mathbf{s}_{i,k}^{(2)}=[\mathbf{s}_{i,k}]_{n_{x}+1:2n_{x}}$.
\State Compute $\widetilde{\mathbf{s}}_{i,k}^{(1)}=\bm{\Phi}(\mathbf{s}_{i,k}^{(1)})$ and $\widetilde{\mathbf{s}}_{i,k}^{(2)}=\bm{\Phi}(\mathbf{s}_{i,k}^{(2)})$.
\State Compute $\mathbb{E}_{k}[\mathbf{x}_{k}\mathbf{x}_{k}^{T}]$, $\mathbb{E}_{k}[\bm{\Phi}(\mathbf{x}_{k-1})\bm{\Phi}(\mathbf{x}_{k-1})^{T}]$, $\mathbb{E}_{k}[\mathbf{x}_{k}\bm{\Phi}(\mathbf{x}_{k-1})^{T}]$ and $\mathbb{E}_{k}[\bm{\Phi}(\mathbf{x}_{k})\bm{\Phi}(\mathbf{x}_{k})^{T}]$ using \eqref{eqn:RKHS-UKF expect xx}-\eqref{eqn:RKHS-UKF expect phi}.
\State Compute $\hat{\mathbf{A}}_{k}$, $\hat{\mathbf{B}}_{k}$, $\hat{\mathbf{Q}}_{k}$, and $\hat{\mathbf{R}}_{k}$ using \eqref{eqn:RKHS-UKF A estimate}-\eqref{eqn:RKHS-UKF R estimate}.
\State Update dictionary $\{\widetilde{\mathbf{x}}_{l}\}_{1\leq l\leq L}$ using the state estimate $\hat{\mathbf{x}}_{k|k}$ based on the sliding window\cite{van2006sliding} or ALD\cite{engel2004kernel} criterion.
\If{dictionary size increases}
    \Statex Augment $\hat{\mathbf{A}}_{k}$, $\hat{\mathbf{B}}_{k}$, $\mathbf{S}^{x\phi}_{k}$, $\mathbf{S}^{\phi 1}_{k}$, $\mathbf{S}^{y\phi}_{k}$, and $\mathbf{S}^{\phi}_{k}$ with suitable initial values to take into account the updated dictionary size.
\EndIf
\Statex \Return $\hat{\mathbf{x}}_{k|k}$, $\{\mathbf{s}_{i,k}\}_{0\leq i\leq 2n_{z}}$, $\bm{\Sigma}^{z}_{k}$, $\hat{\mathbf{A}}_{k}$, $\hat{\mathbf{B}}_{k}$, $\hat{\mathbf{Q}}_{k}$, $\hat{\mathbf{R}}_{k}$, $\mathbf{S}^{x\phi}_{k}$, $\mathbf{S}^{\phi 1}_{k}$, $\mathbf{S}^{y\phi}_{k}$, and $\mathbf{S}^{\phi}_{k}$.
    \end{algorithmic}    
\end{algorithm}
%-------------------------------------------------
\section{Performance Analyses}
\label{sec:stability}
We adopt the unknown matrix approach to derive sufficient conditions for the stochastic stability of the proposed filters in the exponential-mean-squared-boundedness sense. Proposed in \cite{xiong2006performance_ukf} for UKF with linear observations, the unknown matrix approach has been used to derive the stability conditions for UKF with intermittent observations \cite{li2012stochastic_ukf} and consensus-based UKF \cite{li2015weighted}. In the following, we derive the stability conditions for I-UKF and RKHS-UKF considering the general case of time-varying process and measurement noise covariances $\mathbf{Q}_{k}$, $\mathbf{R}_{k}$ and $\overline{\mathbf{R}}_{k}$ instead of $\mathbf{Q}$, $\mathbf{R}$ and $\bm{\Sigma}_{\epsilon}$, respectively. Note that, analogous to the stability literature of KF, Theorems~\ref{theorem:forward ukf stability}, \ref{theorem:IUKF stability} and \ref{theorem:RKHS-UKF stability} are sufficient but not necessary conditions for stability.

The SPKFs, including UKF, are local approximation approaches\cite{li2017approximate} such that, unlike KF, the gain computation and covariance update steps are coupled with the state updates. Therefore, the asymptotic convergence of the filter is not defined for UKF and I-UKF. Instead, we show that the I-UKF provides conservative estimates in Theorem~\ref{thm:consistency}.

First, recall the definition of the exponential-mean-squared-boundedness of a stochastic process.
\begin{definition}[Exponential mean-squared boundedness \cite{reif1999stochastic}] A stochastic process $\{\bm{\zeta}_{k} \}_{k \geq 0}$ is defined to be exponentially bounded in mean-squared sense if there are real numbers $\eta,\nu>0$ and $0<\lambda<1$ such that $\mathbb{E}\left[\|\bm{\zeta}_{k}\|_{2}^{2}\right]\leq \eta\mathbb{E}\left[\|\bm{\zeta}_{0}\|_{2}^{2}\right]\lambda^{k}+\nu$ holds for every $k\geq 0$.
\end{definition}
\vspace{-10pt}
\subsection{Inverse UKF}
\label{subsec:IUKF stability}
The I-UKF's estimation error can be shown to be exponentially bounded if the forward UKF is stable and the system satisfies some additional assumptions. Particularly, the I-UKF's error dynamics can be shown to satisfy the stability conditions of a generalized UKF. Therefore, we first examine the stochastic stability of forward UKF.

\subsubsection{Forward UKF}
Consider the forward UKF of Section~\ref{subsec:forward UKF} with state-transition \eqref{eqn:state transition x} and observation \eqref{eqn:observation y}. Denote the state prediction, state estimation and measurement prediction errors by $\widetilde{\mathbf{x}}_{k+1|k}\doteq\mathbf{x}_{k+1}-\hat{\mathbf{x}}_{k+1|k}$, $\widetilde{\mathbf{x}}_{k}\doteq\mathbf{x}_{k}-\hat{\mathbf{x}}_{k}$ and $\widetilde{\mathbf{y}}_{k+1}\doteq\mathbf{y}_{k+1}-\hat{\mathbf{y}}_{k+1|k}$, respectively. From \eqref{eqn:state transition x} and \eqref{eqn:forward ukf x predict}, we have $\widetilde{\mathbf{x}}_{k+1|k}=f(\mathbf{x}_{k})+\mathbf{w}_{k}-\sum_{i=0}^{2n_{x}}\omega_{i}\mathbf{s}^{*}_{i,k+1|k}$, which on substituting $\mathbf{s}^{*}_{i,k+1|k}=f(\mathbf{s}_{i,k})$ yields $\widetilde{\mathbf{x}}_{k+1|k}=f(\mathbf{x}_{k})+\mathbf{w}_{k}-\sum_{i=0}^{2n_{x}}\omega_{i}f(\mathbf{s}_{i,k})$.
Using the first-order Taylor series expansion of $f(\cdot)$ at $\hat{\mathbf{x}}_{k}$, we have $\widetilde{\mathbf{x}}_{k+1|k}\approx f(\hat{\mathbf{x}}_{k})+\mathbf{F}_{k}(\mathbf{x}_{k}-\hat{\mathbf{x}}_{k})+\mathbf{w}_{k}-\sum_{i=0}^{2n_{x}}\omega_{i}(f(\hat{\mathbf{x}}_{k})+\mathbf{F}_{k}(\mathbf{s}_{i,k}-\hat{\mathbf{x}}_{k}))$, where $\mathbf{F}_{k}\doteq\frac{\partial f(\mathbf{x})}{\partial\mathbf{x}}\vert_{\mathbf{x}=\hat{\mathbf{x}}_{k}}$. The sigma points $\lbrace\mathbf{s}_{i,k}\rbrace_{0\leq i\leq 2n_{x}}$ are chosen symmetrically about $\hat{\mathbf{x}}_{k}$. Substituting for $\mathbf{s}_{i,k}$ in terms of $\hat{\mathbf{x}}_{k}$ and $\bm{\Sigma}_{k}$ using \eqref{eqn:forward ukf prediction sigma points} simplifies the state prediction error to $\widetilde{\mathbf{x}}_{k+1|k}\approx\mathbf{F}_{k}\widetilde{\mathbf{x}}_{k}+\mathbf{w}_{k}$. Similar to \cite{xiong2006performance_ukf}, we introduce an unknown instrumental diagonal matrix $\mathbf{U}^{x}_{k}\in\mathbb{R}^{n_{x}\times n_{x}}$ to account for the linearization errors as
\par\noindent\small
\begin{align}
    \widetilde{\mathbf{x}}_{k+1|k}=\mathbf{U}^{x}_{k}\mathbf{F}_{k}\widetilde{\mathbf{x}}_{k}+\mathbf{w}_{k}.\label{eqn:linearized x}
\end{align}
\normalsize
Similarly, linearizing $h(\cdot)$ in \eqref{eqn:observation y} and introducing unknown diagonal matrix $\mathbf{U}^{y}_{k}\in\mathbb{R}^{n_{y}\times n_{y}}$ in \eqref{eqn:forward ukf y predict} yields
\par\noindent\small
\begin{align}
    \widetilde{\mathbf{y}}_{k+1}=\mathbf{U}^{y}_{k+1}\mathbf{H}_{k+1}\widetilde{\mathbf{x}}_{k+1|k}+\mathbf{v}_{k+1},\label{eqn:linearized y}
\end{align}
\normalsize
where $\mathbf{H}_{k+1}\doteq\frac{\partial h(\mathbf{x})}{\partial\mathbf{x}}\vert_{\mathbf{x}=\hat{\mathbf{x}}_{k+1|k}}$. Using \eqref{eqn:forward ukf x update}, we have $\widetilde{\mathbf{x}}_{k}=\widetilde{\mathbf{x}}_{k|k-1}-\mathbf{K}_{k}\widetilde{\mathbf{y}}_{k}$, which when substituted in \eqref{eqn:linearized x} with \eqref{eqn:linearized y} yields the forward UKF's prediction error dynamics as
\par\noindent\small
\begin{align}
    \widetilde{\mathbf{x}}_{k+1|k}=\mathbf{U}^{x}_{k}\mathbf{F}_{k}(\mathbf{I}-\mathbf{K}_{k}\mathbf{U}^{y}_{k}\mathbf{H}_{k})\widetilde{\mathbf{x}}_{k|k-1}-\mathbf{U}^{x}_{k}\mathbf{F}_{k}\mathbf{K}_{k}\mathbf{v}_{k}+\mathbf{w}_{k}.\label{eqn:forward ukf error dynamics}
\end{align}
\normalsize
Denote the true prediction covariance by  $\mathbf{P}_{k+1|k}=\mathbb{E}\left[\widetilde{\mathbf{x}}_{k+1|k}\widetilde{\mathbf{x}}_{k+1|k}^{T}\right]$. Define $\delta\mathbf{P}_{k+1|k}$ as the difference of estimated prediction covariance $\bm{\Sigma}_{k+1|k}$ and the true prediction covariance $\mathbf{P}_{k+1|k}$, while $\Delta\mathbf{P}_{k+1|k}$ is the error in the approximation of the expectation $\mathbb{E}\left[\mathbf{U}^{x}_{k}\mathbf{F}_{k}(\mathbf{I}-\mathbf{K}_{k}\mathbf{U}^{y}_{k}\mathbf{H}_{k})\widetilde{\mathbf{x}}_{k|k-1}\widetilde{\mathbf{x}}_{k|k-1}^{T}(\mathbf{I}-\mathbf{K}_{k}\mathbf{U}^{y}_{k}\mathbf{H}_{k})^{T}\mathbf{F}_{k}^{T}\mathbf{U}^{x}_{k}\right]$ by $\mathbf{U}^{x}_{k}\mathbf{F}_{k}(\mathbf{I}-\mathbf{K}_{k}\mathbf{U}^{y}_{k}\mathbf{H}_{k})\bm{\Sigma}_{k|k-1}(\mathbf{I}-\mathbf{K}_{k}\mathbf{U}^{y}_{k}\mathbf{H}_{k})^{T}\mathbf{F}_{k}^{T}\mathbf{U}^{x}_{k}$. Denoting $\hat{\mathbf{Q}}_{k}=\mathbf{Q}_{k}+\mathbf{U}^{x}_{k}\mathbf{F}_{k}\mathbf{K}_{k}\mathbf{R}_{k}\mathbf{K}_{k}^{T}\mathbf{F}_{k}^{T}\mathbf{U}^{x}_{k}+\delta\mathbf{P}_{k+1|k}+\Delta\mathbf{P}_{k+1|k}$ and using \eqref{eqn:forward ukf error dynamics} similar to \cite{xiong2006performance_ukf, li2012stochastic_ukf}, we have
\par\noindent\small
\begin{align*}
    \bm{\Sigma}_{k+1|k}&=\mathbf{U}^{x}_{k}\mathbf{F}_{k}(\mathbf{I}-\mathbf{K}_{k}\mathbf{U}^{y}_{k}\mathbf{H}_{k})\bm{\Sigma}_{k|k-1}(\mathbf{I}-\mathbf{K}_{k}\mathbf{U}^{y}_{k}\mathbf{H}_{k})^{T}\mathbf{F}_{k}^{T}\mathbf{U}^{x}_{k}\\
    &+\hat{\mathbf{Q}}_{k}.
\end{align*}
\normalsize
Similarly, we have
\par\noindent\small
\begin{align*}
    \bm{\Sigma}^{y}_{k+1}&=\mathbf{U}^{y}_{k+1}\mathbf{H}_{k+1}\bm{\Sigma}_{k+1|k}\mathbf{H}_{k+1}^{T}\mathbf{U}^{y}_{k+1}+\hat{\mathbf{R}}_{k+1},\\
    \bm{\Sigma}^{xy}_{k+1}&=\begin{cases}\bm{\Sigma}_{k+1|k}\mathbf{U}^{xy}_{k+1}\mathbf{H}_{k+1}^{T}\mathbf{U}^{y}_{k+1}, & n_{x}\geq n_{y}\\
    \bm{\Sigma}_{k+1|k}\mathbf{H}_{k+1}^{T}\mathbf{U}^{y}_{k+1}\mathbf{U}^{xy}_{k+1}, & n_{x}<n_{y}\end{cases},
\end{align*}
\normalsize
where $\hat{\mathbf{R}}_{k+1}=\mathbf{R}_{k+1}+\Delta\mathbf{P}^{y}_{k+1}+\delta\mathbf{P}^{y}_{k+1}$ with $\delta\mathbf{P}^{y}_{k+1}$ and $\Delta\mathbf{P}^{y}_{k+1}$, respectively, accounting for the difference in true and estimated measurement prediction covariances, and error in the approximation of the expectation. Also, $\mathbf{U}^{xy}_{k+1}$ is the unknown matrix introduced to account for errors in the estimated cross-covariance $\bm{\Sigma}^{xy}_{k+1}$.

The error dynamics \eqref{eqn:forward ukf error dynamics} and the various covariances have the same form as that for forward EKF given by \cite[Sec. V-B-1]{singh2022inverse_part1}. Hence, \cite[Theorem 2]{singh2022inverse_part1} is applicable for forward UKF stability as well. This is formalized in Theorem \ref{theorem:forward ukf stability} below.
\begin{theorem}[Stochastic stability of forward UKF]
\label{theorem:forward ukf stability}
Consider the forward UKF with the non-linear stochastic system given by \eqref{eqn:state transition x} and \eqref{eqn:observation y}. The forward UKF's estimation error $\widetilde{\mathbf{x}}_{k}$ is exponentially bounded in mean-squared sense and bounded with probability one if the following conditions hold true.\\
\textbf{C1.} There exist positive real numbers $\bar{f}$, $\bar{h}$, $\bar{\alpha}$, $\bar{\beta}$, $\bar{\gamma}$, $\underline{\sigma}$, $\bar{\sigma}$, $\bar{q}$, $\bar{r}$, $\hat{q}$ and $\hat{r}$ such that the following bounds are fulfilled for all $k\geq 0$.
\par\noindent\small
\begin{align*}    &\|\mathbf{F}_{k}\|\leq\bar{f},\;\;\;\|\mathbf{H}_{k}\|\leq\bar{h},\;\;\;\|\mathbf{U}^{x}_{k}\|\leq\bar{\alpha},\;\;\;\|\mathbf{U}^{y}_{k}\|\leq\bar{\beta},\;\;\;\|\mathbf{U}^{xy}_{k}\|\leq\bar{\gamma},\\        &\mathbf{Q}_{k}\preceq\bar{q}\mathbf{I},\;\;\;\mathbf{R}_{k}\preceq\bar{r}\mathbf{I},\;\;\;\hat{q}\mathbf{I}\preceq\hat{\mathbf{Q}}_{k},\;\;\;\hat{r}\mathbf{I}\preceq\hat{\mathbf{R}}_{k},\;\;\;\underline{\sigma}\mathbf{I}\preceq\bm{\Sigma}_{k|k-1}\preceq\bar{\sigma}\mathbf{I}.
\end{align*}
\normalsize
\textbf{C2.} $\mathbf{U}^{x}_{k}$ and $\mathbf{F}_{k}$ are non-singular for every $k\geq 0$.\\
\textbf{C3.} The constants satisfy the inequality $\bar{\sigma}\bar{\gamma}\bar{h}^{2}\bar{\beta}^{2}<\hat{r}$.
\end{theorem}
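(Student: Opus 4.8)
The plan is to exploit the fact that, once the instrumental matrices $\mathbf{U}^{x}_{k}$, $\mathbf{U}^{y}_{k}$, $\mathbf{U}^{xy}_{k}$ have absorbed the sigma-point linearization residuals, the prediction-error recursion \eqref{eqn:forward ukf error dynamics} is formally a linear time-varying system driven by independent zero-mean noises, with the same algebraic structure as the EKF error recursion of \cite[Sec.~V-B-1]{singh2022inverse_part1}. I would therefore proceed by the stochastic Lyapunov (unknown-matrix) method of \cite{reif1999stochastic,xiong2006performance_ukf}: take the candidate $V_{k}(\widetilde{\mathbf{x}}_{k|k-1})=\widetilde{\mathbf{x}}_{k|k-1}^{T}\bm{\Sigma}_{k|k-1}^{-1}\widetilde{\mathbf{x}}_{k|k-1}$ and show it obeys a one-step drift inequality $\mathbb{E}[V_{k+1}\mid\widetilde{\mathbf{x}}_{k|k-1}]\leq(1-\rho)V_{k}+\mu$ for constants $0<\rho\leq 1$ and $\mu>0$. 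Reif's lemma then yields exponential mean-squared boundedness, while a companion supermartingale/Borel--Cantelli argument gives boundedness with probability one.

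First I would record the covariance recursions displayed above, namely $\bm{\Sigma}_{k+1|k}=\mathbf{U}^{x}_{k}\mathbf{F}_{k}(\mathbf{I}-\mathbf{K}_{k}\mathbf{U}^{y}_{k}\mathbf{H}_{k})\bm{\Sigma}_{k|k-1}(\mathbf{I}-\mathbf{K}_{k}\mathbf{U}^{y}_{k}\mathbf{H}_{k})^{T}\mathbf{F}_{k}^{T}\mathbf{U}^{x}_{k}+\hat{\mathbf{Q}}_{k}$ together with the expressions for $\bm{\Sigma}^{y}_{k+1}$ and $\bm{\Sigma}^{xy}_{k+1}$, and use condition \textbf{C1} to sandwich $\underline{\sigma}\mathbf{I}\preceq\bm{\Sigma}_{k|k-1}\preceq\bar{\sigma}\mathbf{I}$ so that $V_{k}$ is equivalent to $\|\widetilde{\mathbf{x}}_{k|k-1}\|_{2}^{2}$ up to these constants. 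The non-singularity hypothesis \textbf{C2} guarantees $\mathbf{U}^{x}_{k}\mathbf{F}_{k}$ is invertible, which is what lets one write the closed-loop transition matrix and bound its inverse; this is the device by which the unknown-matrix approach removes the restriction to small initial errors present in \cite{reif1999stochastic}. Substituting \eqref{eqn:forward ukf error dynamics} into $V_{k+1}$ and conditioning on the filtration up to time $k$, the process- and measurement-noise cross terms vanish and the additive noise contributes the bounded constant $\mu$, while the quadratic term in $\widetilde{\mathbf{x}}_{k|k-1}$ must be shown to be strictly smaller than $V_{k}$.

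The main obstacle is precisely this contraction estimate: proving that the covariance-weighted closed-loop map $\bm{\Sigma}_{k+1|k}^{-1/2}\,\mathbf{U}^{x}_{k}\mathbf{F}_{k}(\mathbf{I}-\mathbf{K}_{k}\mathbf{U}^{y}_{k}\mathbf{H}_{k})\,\bm{\Sigma}_{k|k-1}^{1/2}$ has operator norm bounded away from one. The clean route is a matrix-inversion-lemma manipulation of the Riccati-type recursion: one rewrites $(\mathbf{I}-\mathbf{K}_{k}\mathbf{U}^{y}_{k}\mathbf{H}_{k})\bm{\Sigma}_{k|k-1}(\cdot)^{T}$ using $\mathbf{K}_{k}=\bm{\Sigma}^{xy}_{k}(\bm{\Sigma}^{y}_{k})^{-1}$ and the lower bound $\hat{r}\mathbf{I}\preceq\hat{\mathbf{R}}_{k}\preceq\bm{\Sigma}^{y}_{k}$, whereupon condition \textbf{C3}, $\bar{\sigma}\bar{\gamma}\bar{h}^{2}\bar{\beta}^{2}<\hat{r}$, is exactly the inequality that forces the gain to subtract a strictly positive amount of error energy, yielding $\rho>0$. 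Once the drift inequality is in hand, the conclusion follows verbatim from the cited EKF result; in practice the most economical write-up is to verify term-by-term that \eqref{eqn:forward ukf error dynamics} and the covariance recursions match the hypotheses of \cite[Theorem~2]{singh2022inverse_part1} and then invoke it, as indicated in the paragraph preceding Theorem~\ref{theorem:forward ukf stability}.
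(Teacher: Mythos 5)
Your proposal follows essentially the same route as the paper: linearize the sigma-point propagation with unknown instrumental matrices to obtain the error recursion \eqref{eqn:forward ukf error dynamics}, note that it and the covariance recursions have exactly the algebraic form of the forward-EKF error dynamics, and then invoke \cite[Theorem~2]{singh2022inverse_part1}, which runs the stochastic Lyapunov argument with $V_{k}=\widetilde{\mathbf{x}}_{k|k-1}^{T}\bm{\Sigma}_{k|k-1}^{-1}\widetilde{\mathbf{x}}_{k|k-1}$ — this reduction is precisely the paper's entire proof. One small correction of attribution: the strict contraction $\rho>0$ in the drift inequality comes from the lower bound $\hat{q}\mathbf{I}\preceq\hat{\mathbf{Q}}_{k}$ (via $\bm{\Sigma}_{k+1|k}\succeq M\bm{\Sigma}_{k|k-1}M^{T}+\hat{q}\mathbf{I}$ for the closed-loop map $M$), whereas \textbf{C3} is what bounds the gain, $\|\mathbf{K}_{k}\|\leq\bar{\sigma}\bar{\gamma}\bar{h}\bar{\beta}/\hat{r}$, and hence makes $\mathbf{I}-\mathbf{K}_{k}\mathbf{U}^{y}_{k}\mathbf{H}_{k}$ invertible with bounded inverse — it is not itself the source of the energy decrease.
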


The UKF stability mentioned in \cite[Theorem~1]{xiong2006performance_ukf} for only linear measurements requires a lower bound on measurement noise covariance $\mathbf{R}_{k}$ and also, an upper bound on $\hat{\mathbf{Q}}_{k}$. But $\hat{\mathbf{Q}}_{k}$ is not upper-bounded in Theorem~\ref{theorem:forward ukf stability}. Our nonlinear stability guarantee requires upper (lower) bounds on noise covariances $\mathbf{Q}_{k}$ ($\hat{\mathbf{Q}}_{k}$) and $\mathbf{R}_{k}$ ($\hat{\mathbf{R}}_{k}$). Both $\hat{\mathbf{Q}}_{k}$ and $\hat{\mathbf{R}}_{k}$ can be made positive definite to satisfy the lower bounds and enhance the filter's stability by enlarging the noise covariance matrices $\mathbf{Q}_{k}$ and $\mathbf{R}_{k}$, respectively \cite{xiong2006performance_ukf,xiong2007authorreply}. For practical systems where we have a reasonable estimate of the state $\mathbf{x}_{k}$ (due to the process's constraints), the bounds on unknown matrices $\mathbf{U}^{x}_{k}$ and $\mathbf{U}^{y}_{k}$ can be estimated using functions $f(\cdot)$ and $h(\cdot)$\cite{boutayeb1999strong}.

\subsubsection{I-UKF}
We now consider the I-UKF of Section~\ref{subsec:IUKF} with state-transition \eqref{eqn:inverse ukf state transition} and observation \eqref{eqn:observation a}. Similar to the forward UKF, we introduce unknown matrices $\overline{\mathbf{U}}^{x}_{k}$ and $\overline{\mathbf{U}}^{a}_{k}$ to account for the errors in the linearization of functions $\widetilde{f}(\cdot)$ and $g(\cdot)$, respectively, and $\overline{\mathbf{U}}^{xa}_{k}$ for the errors in cross-covariance matrix estimation. Also, $\hat{\overline{\mathbf{Q}}}_{k}$ and $\hat{\overline{\mathbf{R}}}_{k}$ denote the counterparts of $\hat{\mathbf{Q}}_{k}$ and $\hat{\mathbf{R}}_{k}$, respectively, in the I-UKF's error dynamics. Define $\widetilde{\mathbf{F}}_{k}\doteq\left.\frac{\partial\widetilde{f}(\mathbf{x},\bm{\Sigma}_{k},\mathbf{x}_{k+1},\mathbf{0})}{\partial\mathbf{x}}\right\vert_{\mathbf{x}=\doublehat{\mathbf{x}}_{k}}$ and $\mathbf{G}_{k}\doteq\left.\frac{\partial g(\mathbf{x})}{\partial\mathbf{x}}\right\vert_{\mathbf{x}=\doublehat{\mathbf{x}}_{k|k-1}}$. While approximating $\bm{\Sigma}_{k}$ by $\bm{\Sigma}^{*}_{k}$ in I-UKF, we ignore any possible errors. Assume that the forward gain $\mathbf{K}_{k+1}$ computed from $\doublehat{\mathbf{x}}_{k}$ is approximately same as that computed from $\hat{\mathbf{x}}_{k}$ in forward UKF. Additionally, these approximation errors can be bounded by positive constants because $\bm{\Sigma}_{k}$ and $\mathbf{K}_{k+1}$ can be proved to be bounded matrices under the I-UKF's stability assumptions. The bounds required on various matrices for forward UKF's stability are also satisfied when these matrices are evaluated by I-UKF at its own estimates, i.e., $\left\|\frac{\partial f(\mathbf{x})}{\partial\mathbf{x}}\right\|\leq \bar{f}$ and $\left\|\frac{\partial h(\mathbf{x})}{\partial\mathbf{x}}\right\|\leq \bar{h}$ where $\mathbf{x}$ can be any sigma point.

\begin{theorem}[Stochastic stability of I-UKF]
\label{theorem:IUKF stability}
Consider the attacker's forward UKF that is stable as per Theorem~\ref{theorem:forward ukf stability}. The I-UKF's state estimation error is exponentially bounded in mean-squared sense and bounded with probability one if the following assumptions hold true.\\
 \textbf{C4.} There exist positive real numbers $\bar{g},\bar{c},\bar{d},\bar{\epsilon},\hat{c},\hat{d},\underline{p}$ and $\bar{p}$ such that the following bounds are fulfilled for all $k\geq 0$.
 \par\noindent\small
\begin{align*}
&\|\mathbf{G}_{k}\|\leq\bar{g},\;\;\|\overline{\mathbf{U}}^{a}_{k}\|\leq\bar{c},\;\;\|\overline{\mathbf{U}}^{xa}_{k}\|\leq\bar{d},\;\;\overline{\mathbf{R}}_{k}\preceq\bar{\epsilon}\mathbf{I},\;\;\hat{c}\mathbf{I}\preceq\hat{\overline{\mathbf{Q}}}_{k},\\
&\hat{d}\mathbf{I}\preceq\hat{\overline{R}}_{k},\;\;\underline{p}\mathbf{I}\preceq\overline{\bm{\Sigma}}_{k|k-1}\preceq\bar{p}\mathbf{I}.
\end{align*}
\normalsize
\textbf{C5.} There exist a real constant $\underline{y}$ (not necessarily positive) such that $\bm{\Sigma}^{y}_{k}\succeq\underline{y}\mathbf{I}$ for all $k\geq 0$.\\
\textbf{C6.} The functions $f(\cdot)$ and $h(\cdot)$ have bounded outputs i.e. $\|f(\cdot)\|_{2}\leq\delta_{f}$ and $\|h(\cdot)\|_{2}\leq\delta_{h}$ for some real positive numbers $\delta_{f}$ and $\delta_{h}$.\\
\textbf{C7.} For all $k\geq 0$, $\widetilde{\mathbf{F}}_{k}$ is non-singular and its inverse satisfies $\|\widetilde{\mathbf{F}}^{-1}_{k}\|\leq\bar{a}$ for some positive real constant $\bar{a}$.\\
\textbf{C8.} The constants satisfy the inequality $\bar{p}\bar{d}\bar{g}^{2}\bar{c}^{2}<\hat{d}$.
\end{theorem}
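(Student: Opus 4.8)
The plan is to recast the I-UKF estimation-error recursion into exactly the generalized-UKF form \eqref{eqn:forward ukf error dynamics} already analyzed for the forward filter, and then invoke Theorem~\ref{theorem:forward ukf stability} (equivalently, \cite[Theorem~2]{singh2022inverse_part1}) verbatim. Write the prediction and estimation errors as $\mathbf{e}_{k+1|k}\doteq\hat{\mathbf{x}}_{k+1}-\doublehat{\mathbf{x}}_{k+1|k}$ and $\mathbf{e}_{k}\doteq\hat{\mathbf{x}}_{k}-\doublehat{\mathbf{x}}_{k}$, recalling that the ``state'' tracked by the inverse filter is the forward estimate $\hat{\mathbf{x}}_{k}$. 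First I would linearize the I-UKF transition $\widetilde{f}$ of \eqref{eqn:inverse ukf state transition} by a first-order Taylor expansion about $\doublehat{\mathbf{x}}_{k}$; exactly as in the forward derivation, the symmetry of the sigma points about $\doublehat{\mathbf{x}}_{k}$ cancels the symmetric perturbations and leaves $\mathbf{e}_{k+1|k}\approx\widetilde{\mathbf{F}}_{k}\mathbf{e}_{k}+\mathbf{K}_{k+1}\mathbf{v}_{k+1}$, where the non-additive noise $\mathbf{v}_{k+1}$ in \eqref{eqn: IUKF state transition detail} enters to first order as $\mathbf{K}_{k+1}\mathbf{v}_{k+1}$ and plays the role of the effective process noise. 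Introducing the instrumental matrix $\overline{\mathbf{U}}^{x}_{k}$ to absorb the residual linearization error gives $\mathbf{e}_{k+1|k}=\overline{\mathbf{U}}^{x}_{k}\widetilde{\mathbf{F}}_{k}\mathbf{e}_{k}+\mathbf{K}_{k+1}\mathbf{v}_{k+1}$.

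Next I would linearize $g(\cdot)$ of \eqref{eqn:observation a} about $\doublehat{\mathbf{x}}_{k+1|k}$ and introduce $\overline{\mathbf{U}}^{a}_{k+1}$ to obtain $\mathbf{a}_{k+1}-\hat{\mathbf{a}}_{k+1|k}=\overline{\mathbf{U}}^{a}_{k+1}\mathbf{G}_{k+1}\mathbf{e}_{k+1|k}+\bm{\epsilon}_{k+1}$. Substituting the I-UKF update \eqref{eqn:IUKF state update}, i.e. $\mathbf{e}_{k+1}=\mathbf{e}_{k+1|k}-\overline{\mathbf{K}}_{k+1}(\mathbf{a}_{k+1}-\hat{\mathbf{a}}_{k+1|k})$, back into the prediction recursion yields the closed-loop error dynamics
\par\noindent\small
\begin{align*}
\mathbf{e}_{k+1|k}&=\overline{\mathbf{U}}^{x}_{k}\widetilde{\mathbf{F}}_{k}(\mathbf{I}-\overline{\mathbf{K}}_{k}\overline{\mathbf{U}}^{a}_{k}\mathbf{G}_{k})\mathbf{e}_{k|k-1}\\
&\quad-\overline{\mathbf{U}}^{x}_{k}\widetilde{\mathbf{F}}_{k}\overline{\mathbf{K}}_{k}\bm{\epsilon}_{k}+\mathbf{K}_{k+1}\mathbf{v}_{k+1},
\end{align*}
\normalsize
which is structurally identical to \eqref{eqn:forward ukf error dynamics} under the correspondence $(\mathbf{F}_{k},\mathbf{H}_{k},\mathbf{U}^{x}_{k},\mathbf{U}^{y}_{k},\mathbf{U}^{xy}_{k},\mathbf{w}_{k},\mathbf{v}_{k})\leftrightarrow(\widetilde{\mathbf{F}}_{k},\mathbf{G}_{k},\overline{\mathbf{U}}^{x}_{k},\overline{\mathbf{U}}^{a}_{k},\overline{\mathbf{U}}^{xa}_{k},\mathbf{K}_{k+1}\mathbf{v}_{k+1},\bm{\epsilon}_{k})$. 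The covariance recursions \eqref{eqn:IUKF sig predict}, \eqref{eqn:IUKF sig a} and \eqref{eqn:IUKF cross cov} then inherit the same algebraic form as the forward ones, with effective noise covariances $\hat{\overline{\mathbf{Q}}}_{k}$ and $\hat{\overline{\mathbf{R}}}_{k}$.

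It then remains to check that hypotheses C4--C8 reproduce C1--C3 for this generalized filter. Conditions C4 supply the norm and p.s.d. bounds $\bar{g},\bar{c},\bar{d},\bar{\epsilon},\hat{c},\hat{d},\underline{p},\bar{p}$ matching $\bar{h},\bar{\beta},\bar{\gamma},\bar{r},\hat{q},\hat{r},\underline{\sigma},\bar{\sigma}$; C7 matches the nonsingularity requirement C2 on $\widetilde{\mathbf{F}}_{k}$; and C8 is exactly the analogue of C3. The effective process noise $\mathbf{K}_{k+1}\mathbf{v}_{k+1}$ has covariance $\mathbf{K}_{k+1}\mathbf{R}_{k+1}\mathbf{K}_{k+1}^{T}$; its boundedness, together with the lower bound $\hat{c}\mathbf{I}\preceq\hat{\overline{\mathbf{Q}}}_{k}$, follows because the hypothesis that the forward UKF is stable (Theorem~\ref{theorem:forward ukf stability}) guarantees $\bm{\Sigma}_{k}$ and hence $\mathbf{K}_{k+1}$ are uniformly bounded, while C5 keeps $(\bm{\Sigma}^{y}_{k})^{-1}$ controlled and C6 keeps the nonlinear outputs---hence the linearization points and instrumental matrices---bounded. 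With all of C1--C3 thus verified for the error dynamics above, Theorem~\ref{theorem:forward ukf stability} applies directly and yields exponential mean-squared boundedness and boundedness with probability one of $\mathbf{e}_{k}=\hat{\mathbf{x}}_{k}-\doublehat{\mathbf{x}}_{k}$.

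The main obstacle is the non-additive, state-dependent noise term $\mathbf{K}_{k+1}\mathbf{v}_{k+1}$: unlike the forward filter's additive $\mathbf{w}_{k}$, its covariance $\mathbf{K}_{k+1}\mathbf{R}_{k+1}\mathbf{K}_{k+1}^{T}$ is coupled to the forward gain, so one must show it is uniformly bounded above and below for $\hat{\overline{\mathbf{Q}}}_{k}$ to satisfy the hypotheses of Theorem~\ref{theorem:forward ukf stability}; this is precisely where forward-UKF stability is indispensable. A secondary subtlety is justifying the approximation that the gain $\mathbf{K}_{k+1}$ computed from $\doublehat{\mathbf{x}}_{k}$ (through $\bm{\Sigma}^{*}_{k}$) coincides, up to a bounded error, with the forward gain computed from $\hat{\mathbf{x}}_{k}$, which again rests on the boundedness of $\bm{\Sigma}_{k}$ and $\mathbf{K}_{k+1}$ under the stated stability assumptions.
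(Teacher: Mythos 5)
Your overall strategy is the same as the paper's: recast the I-UKF error recursion as a generalized (augmented-state, non-additive-noise) UKF and invoke the forward stability result, with the non-additive noise handled through the bounded gain $\mathbf{K}_{k+1}$ and the correspondence you list. However, there is a genuine gap in the step where you claim that C4--C8 ``reproduce C1--C3 for this generalized filter.'' Condition \textbf{C1} of Theorem~\ref{theorem:forward ukf stability} also requires $\|\mathbf{F}_{k}\|\leq\bar{f}$ and $\|\mathbf{U}^{x}_{k}\|\leq\bar{\alpha}$ with $\mathbf{U}^{x}_{k}$ non-singular; the inverse-filter analogues, namely an upper bound on $\|\widetilde{\mathbf{F}}_{k}\|$ and an upper bound plus non-singularity of $\overline{\mathbf{U}}^{x}_{k}$, are \emph{not} among the hypotheses C4--C8 and must be derived. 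This derivation is the bulk of the actual proof. The map $\widetilde{f}$ in \eqref{eqn:inverse ukf state transition} depends on $\hat{\mathbf{x}}_{k}$ not only through $f$ but also through the sigma points, the Cholesky factor $\sqrt{\bm{\Sigma}_{k+1|k}}$, and the gain $\mathbf{K}_{k+1}=\bm{\Sigma}^{xy}_{k+1}(\bm{\Sigma}^{y}_{k+1})^{-1}$, all of which are themselves functions of $\hat{\mathbf{x}}_{k}$; bounding $\|\widetilde{\mathbf{F}}_{k}\|$ therefore requires bounding the derivatives of the Cholesky factor and of every entry of $\mathbf{K}_{k+1}$ with respect to $\hat{\mathbf{x}}_{k}$. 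This is exactly where \textbf{C5} and \textbf{C6} enter (bounded $f,h$ bound the propagated sigma points and hence the covariance estimates and their derivatives; the lower bound on $\bm{\Sigma}^{y}_{k}$ controls $(\bm{\Sigma}^{y}_{k})^{-1}$ and its derivative), and your proposal only gestures at this without carrying out the argument.

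Relatedly, the role of \textbf{C7} is stronger than ``matching the nonsingularity requirement C2.'' The instrumental matrix for the inverse filter turns out to be $\overline{\mathbf{U}}^{x}_{k}=(\mathbf{I}-\mathbf{K}_{k+1}\mathbf{U}^{y}_{k+1}\mathbf{H}_{k+1})\mathbf{U}^{x}_{k}\mathbf{F}_{k}\widetilde{\mathbf{F}}_{k}^{-1}$, obtained by writing the exact prediction error in terms of the \emph{forward} filter's linearization and comparing with $\overline{\mathbf{U}}^{x}_{k}\widetilde{\mathbf{F}}_{k}\mathbf{e}_{k}$; the bound $\|\widetilde{\mathbf{F}}_{k}^{-1}\|\leq\bar{a}$ is what makes $\|\overline{\mathbf{U}}^{x}_{k}\|$ bounded and, together with the invertibility of $(\mathbf{I}-\mathbf{K}_{k+1}\mathbf{U}^{y}_{k+1}\mathbf{H}_{k+1})$ under forward stability, makes $\overline{\mathbf{U}}^{x}_{k}$ non-singular. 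Without these two pieces --- the explicit bound $\|\widetilde{\mathbf{F}}_{k}\|\leq c_{f}$ and the identification and bounding of $\overline{\mathbf{U}}^{x}_{k}$ --- the reduction to Theorem~\ref{theorem:forward ukf stability} does not go through, so the proposal as written is incomplete at its central step.
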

\begin{proof}
See Appendix~\ref{App-thm-IUKF}.
\end{proof}

While there are no constraints on the constant $\underline{y}$ in \textbf{C5}, Theorem~\ref{theorem:IUKF stability} requires an additional lower bound on $\bm{\Sigma}^{y}_{k}$ which was not needed for forward UKF's stability. Also, $\underline{y}\neq 0$ because $(\bm{\Sigma}^{y}_{k})^{-1}$ exists for forward UKF to compute its gain. Conditions \textbf{C5} and \textbf{C6} are necessary to upper-bound the Jacobian $\widetilde{\mathbf{F}}_{k}$. The bounds $\|f(\cdot)\|_{2}\leq\delta_{f}$ and $\|h(\cdot)\|_{2}\leq\delta_{h}$ help to bound the magnitude of the propagated sigma points $\lbrace\mathbf{s}^{*}_{i,k+1|k}\rbrace$ and $\lbrace\mathbf{q}^{*}_{i,k+1|k}\rbrace$, respectively, generated from a state estimate, which in turn, upper bounds the various covariance estimates. Also, the computation of gain matrix $\mathbf{K}_{k}$ involves $(\bm{\Sigma}^{y}_{k})^{-1}$, which can be upper-bounded by assuming a lower bound on $\bm{\Sigma}^{y}_{k}$ as in \textbf{C5}. Note that Theorem~\ref{theorem:IUKF stability} assumes bounded functions $f(\cdot)$ and $h(\cdot)$, but $g(\cdot)$ does not have bounded outputs in general.

\subsubsection{Conservative estimator}\label{subsubsec:consistency}
Recall the definition of a conservative estimate pair $(\hat{\mathbf{x}},\bm{\Sigma})$. Note that \cite{battistelli2014kullback} defines the same as a consistent estimator.
\begin{definition}[Conservative estimate\cite{battistelli2014kullback}]\label{def:consistency}
An unbiased estimate $\hat{\mathbf{x}}$ of random variable $\mathbf{x}$ and the corresponding error covariance estimate $\bm{\Sigma}$ are defined to be conservative if $\mathbb{E}[(\mathbf{x}-\hat{\mathbf{x}})(\mathbf{x}-\hat{\mathbf{x}})^{T}]\preceq\bm{\Sigma}$, i.e., the estimated covariance $\bm{\Sigma}$ upper bounds the true error covariance. 
\end{definition}

Following the statistical linearization technique (SLT)\cite{arasaratnam2007qkf} to linearize \eqref{eqn:inverse ukf state transition} and \eqref{eqn:observation a} with respect to I-UKF's augmented state $\mathbf{z}_{k}$ and (forward) estimate $\hat{\mathbf{x}}_{k}$, respectively, we obtain
\par\noindent\small
\begin{align}       \hat{\mathbf{x}}_{k+1}&=\mathbf{U}^{z}_{k}\overline{\mathbf{F}}^{x}_{k}\hat{\mathbf{x}}_{k}+\mathbf{U}^{z}_{k}\overline{\mathbf{F}}^{v}_{k}\mathbf{v}_{k+1},\label{eqn:SLT state transition}\\
    \mathbf{a}_{k}&=\mathbf{U}^{a}_{k}\overline{\mathbf{G}}_{k}\hat{\mathbf{x}}_{k}+\bm{\epsilon}_{k},\label{eqn:SLT observation}
\end{align}
\normalsize
where $\overline{\mathbf{F}}_{k}=[\overline{\mathbf{F}}^{x}_{k},\overline{\mathbf{F}}^{v}_{k}]$ and $\overline{\mathbf{G}}_{k}$ are the respective linear pseudo transition matrices. Also, $\mathbf{U}^{z}_{k}$ and $\mathbf{U}^{a}_{k}$ are unknown diagonal matrices introduced to account for the approximation errors in SLT. Note that these unknown matrices are different from the ones introduced in Theorem~\ref{theorem:IUKF stability} for the higher-order terms in the Taylor approximation.
\begin{theorem}
    \label{thm:consistency}
    Consider a conservative initial estimate pair $(\doublehat{\mathbf{x}}_{0},\overline{\bm{\Sigma}}_{0})$ for the I-UKF. Then for any $k\geq 1$, the I-UKF's recursive estimate pair $(\doublehat{\mathbf{x}}_{k},\overline{\bm{\Sigma}}_{k})$ is also conservative such that $\mathbb{E}[(\hat{\mathbf{x}}_{k}-\doublehat{\mathbf{x}}_{k})(\hat{\mathbf{x}}_{k}-\doublehat{\mathbf{x}}_{k})^{T}]\preceq\overline{\bm{\Sigma}}_{k}$, where $\hat{\mathbf{x}}_{k}$ is the forward UKF's state estimate.
\end{theorem}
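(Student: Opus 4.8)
The plan is to argue by induction on $k$ that conservativeness survives one complete I-UKF cycle. Write $\mathbf{e}_k\doteq\hat{\mathbf{x}}_k-\doublehat{\mathbf{x}}_k$ for the true estimation error, $\mathbf{e}_{k+1|k}\doteq\hat{\mathbf{x}}_{k+1}-\doublehat{\mathbf{x}}_{k+1|k}$ for the prediction error, and $\mathbf{P}_k\doteq\mathbb{E}[\mathbf{e}_k\mathbf{e}_k^T]$, $\mathbf{P}_{k+1|k}\doteq\mathbb{E}[\mathbf{e}_{k+1|k}\mathbf{e}_{k+1|k}^T]$ for the associated true second moments. The induction hypothesis is $\mathbf{P}_k\preceq\overline{\bm{\Sigma}}_k$, with base case $\mathbf{P}_0\preceq\overline{\bm{\Sigma}}_0$ supplied by the assumed conservative initial pair. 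Two ingredients drive the argument: (i) the statistically linearized dynamics \eqref{eqn:SLT state transition}--\eqref{eqn:SLT observation}, whose unknown matrices $\mathbf{U}^z_k,\mathbf{U}^a_k$ render the error recursions exactly linear, and (ii) the structural fact that the unscented-transform covariances computed by I-UKF equal their statistically linearized quadratic forms plus a positive semidefinite (p.s.d.) residual.

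For the prediction half-step, subtracting the linearized I-UKF prediction $\doublehat{\mathbf{x}}_{k+1|k}=\mathbf{U}^z_k\overline{\mathbf{F}}^x_k\doublehat{\mathbf{x}}_k$ from \eqref{eqn:SLT state transition} yields the linear recursion $\mathbf{e}_{k+1|k}=\mathbf{U}^z_k\overline{\mathbf{F}}^x_k\mathbf{e}_k+\mathbf{U}^z_k\overline{\mathbf{F}}^v_k\mathbf{v}_{k+1}$. Since $\mathbf{v}_{k+1}$ is zero-mean, of covariance $\mathbf{R}$, and independent of $\mathbf{e}_k$, this gives $\mathbf{P}_{k+1|k}=\mathbf{U}^z_k\overline{\mathbf{F}}^x_k\mathbf{P}_k(\overline{\mathbf{F}}^x_k)^T(\mathbf{U}^z_k)^T+\mathbf{U}^z_k\overline{\mathbf{F}}^v_k\mathbf{R}(\overline{\mathbf{F}}^v_k)^T(\mathbf{U}^z_k)^T$. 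By ingredient (ii), the I-UKF's predicted covariance \eqref{eqn:IUKF sig predict} is the same expression with $\mathbf{P}_k$ replaced by $\overline{\bm{\Sigma}}_k$ and an added p.s.d. residual $\bm{\Pi}^f_k\succeq\mathbf{0}$. Hence $\overline{\bm{\Sigma}}_{k+1|k}-\mathbf{P}_{k+1|k}=\mathbf{U}^z_k\overline{\mathbf{F}}^x_k(\overline{\bm{\Sigma}}_k-\mathbf{P}_k)(\overline{\mathbf{F}}^x_k)^T(\mathbf{U}^z_k)^T+\bm{\Pi}^f_k\succeq\mathbf{0}$, using the induction hypothesis and the fact that congruence preserves the p.s.d. cone; therefore $\mathbf{P}_{k+1|k}\preceq\overline{\bm{\Sigma}}_{k+1|k}$.

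For the update half-step, set $\overline{\mathbf{H}}_{k+1}\doteq\mathbf{U}^a_{k+1}\overline{\mathbf{G}}_{k+1}$. Equation \eqref{eqn:SLT observation} gives the innovation $\mathbf{a}_{k+1}-\hat{\mathbf{a}}_{k+1|k}=\overline{\mathbf{H}}_{k+1}\mathbf{e}_{k+1|k}+\bm{\epsilon}_{k+1}$, so \eqref{eqn:IUKF state update} produces the closed-loop error $\mathbf{e}_{k+1}=(\mathbf{I}-\overline{\mathbf{K}}_{k+1}\overline{\mathbf{H}}_{k+1})\mathbf{e}_{k+1|k}-\overline{\mathbf{K}}_{k+1}\bm{\epsilon}_{k+1}$ and the true Joseph-form covariance $\mathbf{P}_{k+1}=(\mathbf{I}-\overline{\mathbf{K}}_{k+1}\overline{\mathbf{H}}_{k+1})\mathbf{P}_{k+1|k}(\mathbf{I}-\overline{\mathbf{K}}_{k+1}\overline{\mathbf{H}}_{k+1})^T+\overline{\mathbf{K}}_{k+1}\bm{\Sigma}_{\epsilon}\overline{\mathbf{K}}_{k+1}^T$. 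The prediction bound and monotonicity of this Joseph map give $\mathbf{P}_{k+1}\preceq(\mathbf{I}-\overline{\mathbf{K}}_{k+1}\overline{\mathbf{H}}_{k+1})\overline{\bm{\Sigma}}_{k+1|k}(\mathbf{I}-\overline{\mathbf{K}}_{k+1}\overline{\mathbf{H}}_{k+1})^T+\overline{\mathbf{K}}_{k+1}\bm{\Sigma}_{\epsilon}\overline{\mathbf{K}}_{k+1}^T$. The decisive step is then purely algebraic: using the SLT regression relations $\overline{\bm{\Sigma}}^{xa}_{k+1}=\overline{\bm{\Sigma}}_{k+1|k}\overline{\mathbf{H}}_{k+1}^T$ and $\overline{\bm{\Sigma}}^{a}_{k+1}=\overline{\mathbf{H}}_{k+1}\overline{\bm{\Sigma}}_{k+1|k}\overline{\mathbf{H}}_{k+1}^T+\bm{\Sigma}_{\epsilon}+\bm{\Pi}^a_{k+1}$ with $\bm{\Pi}^a_{k+1}\succeq\mathbf{0}$, together with the gain $\overline{\mathbf{K}}_{k+1}=\overline{\bm{\Sigma}}^{xa}_{k+1}(\overline{\bm{\Sigma}}^{a}_{k+1})^{-1}$ from \eqref{eqn:IUKF gain}, the right-hand side collapses to $\overline{\bm{\Sigma}}_{k+1|k}-\overline{\mathbf{K}}_{k+1}\overline{\bm{\Sigma}}^{a}_{k+1}\overline{\mathbf{K}}_{k+1}^T-\overline{\mathbf{K}}_{k+1}\bm{\Pi}^a_{k+1}\overline{\mathbf{K}}_{k+1}^T$, i.e. to the short-form update \eqref{eqn:IUKF covariance update} minus the p.s.d. correction $\overline{\mathbf{K}}_{k+1}\bm{\Pi}^a_{k+1}\overline{\mathbf{K}}_{k+1}^T$. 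Thus $\mathbf{P}_{k+1}\preceq\overline{\bm{\Sigma}}_{k+1}$, closing the induction.

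The main obstacle is rigorously establishing ingredient (ii): that the I-UKF's unscented covariances split into the statistically linearized quadratic form plus a genuinely p.s.d. residual ($\bm{\Pi}^f_k,\bm{\Pi}^a_{k+1}\succeq\mathbf{0}$), and that the cross-covariance factors exactly as $\overline{\bm{\Sigma}}^{xa}_{k+1}=\overline{\bm{\Sigma}}_{k+1|k}\overline{\mathbf{H}}_{k+1}^T$ with the \emph{same} effective matrix $\overline{\mathbf{H}}_{k+1}$ that appears in the error recursion. This forces SLT to be used in its optimal-regression form: $\overline{\mathbf{G}}_{k+1}$ is the weighted least-squares fit over the sigma points, so the cross-covariance identity holds by construction and the residual is precisely the (p.s.d.) covariance of the fit error, which the unscented transform reinjects into $\overline{\bm{\Sigma}}^{a}_{k+1}$; the p.s.d.-ness of these residuals is exactly what makes both inequalities point the right way. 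One must also verify that the single unknown matrix $\mathbf{U}^a_{k+1}$ simultaneously linearizes the true measurement map in the error dynamics and is consistent with the regression used in the covariance computation, i.e. that it can absorb the discrepancy between linearizing at $\doublehat{\mathbf{x}}_{k+1|k}$ versus at the true $\hat{\mathbf{x}}_{k+1}$; and that unbiasedness of $\doublehat{\mathbf{x}}_k$, needed so that $\mathbf{P}_k$ is a bona fide covariance, propagates alongside, which follows from the zero-mean driving noises in the two linear recursions.
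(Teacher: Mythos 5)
Your proposal is correct and follows essentially the same route as the paper's proof: induction on $k$, the SLT-linearized error recursions \eqref{eqn:SLT state transition}--\eqref{eqn:SLT observation} with the unknown matrices $\mathbf{U}^{z}_{k},\mathbf{U}^{a}_{k}$, congruence preserving the p.s.d.\ order in the prediction step, and the Joseph-form/gain identity in the update step. Your only departure is a refinement: you carry explicit p.s.d.\ regression residuals $\bm{\Pi}^{f}_{k},\bm{\Pi}^{a}_{k+1}$ in the unscented covariances (which only strengthens the inequalities), whereas the paper treats the SLT relations as exact by letting $\mathbf{U}^{z}_{k},\mathbf{U}^{a}_{k}$ absorb all approximation error---precisely the caveat it states after the theorem.
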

\begin{proof}
    See Appendix~\ref{App-thm-consistency}.
\end{proof}
It follows from Theorem~\ref{thm:consistency} and Definition~\ref{def:consistency} that I-UKF is a conservative estimator if the initial estimate $\doublehat{\mathbf{x}}_{0}$ is unbiased and initial error covariance estimate $\overline{\bm{\Sigma}}_{0}$ is chosen sufficiently large. However, Theorem~\ref{thm:consistency} holds only when the SLT-based linearized models \eqref{eqn:SLT state transition} and \eqref{eqn:SLT observation} well approximate the non-linear equations \eqref{eqn:inverse ukf state transition} and \eqref{eqn:observation a}, respectively. In particular, if the unknown matrices $\mathbf{U}^{z}_{k}$ and $\mathbf{U}^{a}_{k}$ are not sufficient to account for the approximation errors, the estimates $\doublehat{\mathbf{x}}_{k}$ will be in general biased and hence, not conservative.

\vspace{-8pt}
\subsection{RKHS-UKF}
\label{subsec:RKHS-UKF stability}
Consider the following assumptions on the system dynamics and the RKHS-UKF.\\
\textbf{A1.} The kernel is a Gaussian kernel with width $\sigma>0$ such that $K(\mathbf{x}_{i},\mathbf{x}_{j})=\exp{\left(-\frac{\|\mathbf{x}_{i}-\mathbf{x}_{j}\|^{2}_{2}}{\sigma^2}\right)}$.\\
\textbf{A2.}The actual states $\mathbf{x}_{k}$ lie in a compact set $\mathcal{X}$ for all $k\geq 0$. For any state $\mathbf{x}$, $f(\mathbf{x})\in\mathcal{X}$, i.e., without any process noise, the state remains within the compact set.\\
\textbf{A3.} The dictionary $\{\widetilde{\mathbf{x}}_{l}\}_{1\leq l\leq L}$ is finite with fixed size.\\
\textbf{A4.} The true coefficient matrices $\mathbf{A}$ and $\mathbf{B}$ in \eqref{eqn:RKHS-UKF state transition approx} and \eqref{eqn:RKHS-UKF observation approx} satisfy $\|\mathbf{A}\|\leq\overline{a}$ and $\|\mathbf{B}\|\leq\overline{b}$, respectively, for some constants $\overline{a}$ and $\overline{b}$.

The conditions for a finite dictionary using the ALD criterion are discussed in \cite[Theorem~3.1]{engel2004kernel}. \textbf{A2} is essential for the Representer theorem to be valid\cite{scholkopf2001generalized}, the Gaussian kernel to approximate non-linear functions with arbitrarily small error \cite{steinwart2001influence} and also, for a finite dictionary under the ALD criterion\cite{engel2004kernel}. Also, the bound on coefficient matrices $\mathbf{A}$ and $\mathbf{B}$ in \textbf{A4} agrees with functions $f(\cdot)$ and $h(\cdot)$ being bounded which is needed for the Gaussian kernel to approximate them with small approximation errors. Under \textbf{A2}, the state transition \eqref{eqn:state transition x} is modified to $\mathbf{x}_{k+1}=\Gamma(f(\mathbf{x}_{k})+\mathbf{w}_{k})$ where $\Gamma(\cdot)$ denotes the projection operator onto the set $\mathcal{X}$. Denote the projection error by $\bm{\eta}_{k}$ such that
\par\noindent\small
\begin{align}
    \mathbf{x}_{k+1}=f(\mathbf{x}_{k})+\mathbf{w}_{k}+\bm{\eta}_{k}.\label{eqn:state transition with projection}
\end{align}
\normalsize
Note that this projection is considered only for the actual state evolution. Intuitively, this projection represents the physical constraints on the state of the process being observed. For instance, in a radar's target localization problem, the actual target location is reasonably upper-bounded by the maximum unambiguous range and beam pattern (main lobe) of the radar. The noise $\mathbf{w}_{k}$ then represents the modeling uncertainties that are assumed to be Gaussian to obtain simplified closed-form solutions\cite{ristic2003beyond}. The RKHS-UKF's state estimates are not projected onto the set $\mathcal{X}$. However, the RKHS-UKF's coefficient matrix estimates $\hat{\mathbf{A}}_{k}$ and $\hat{\mathbf{B}}_{k}$ are projected to satisfy the bounds of \textbf{A4}. We denote the approximation errors in the kernel function approximation of functions $f(\cdot)$ and $h(\cdot)$, respectively, by $\bm{\delta}_{f}(\cdot)$ and $\bm{\delta}_{h}(\cdot)$ such that
\par\noindent\small
\begin{align}
    &f(\mathbf{x})=\mathbf{A}\bm{\Phi}(\mathbf{x})+\bm{\delta}_{f}(\mathbf{x}),\label{eqn:approx error in f}\\
    &h(\mathbf{x})=\mathbf{B}\bm{\Phi}(\mathbf{x})+\bm{\delta}_{h}(\mathbf{x}).\label{eqn:approx error in h}
\end{align}
\normalsize

The state prediction error $\widetilde{\mathbf{x}}_{k+1|k}$, state estimation error $\widetilde{\mathbf{x}}_{k}$ and measurement prediction error $\widetilde{\mathbf{y}}_{k+1}$ are as earlier defined for forward UKF. %in Section~\ref{subsec:IUKF stability}. 
With $\hat{\mathbf{z}}_{k+1|k}=[\hat{\mathbf{x}}_{k+1|k}^{T}\;\hat{\mathbf{x}}_{k|k}^{T}]$, using \eqref{eqn:RKHS-UKF state transition aug}, \eqref{eqn:RKHS-UKF prediction propagate} and \eqref{eqn:RKHS-UKF predicted state}, we have $\hat{\mathbf{x}}_{k+1|k}=\sum_{i=0}^{2n_{z}}\omega_{i}\hat{\mathbf{A}}_{k}\bm{\Phi}([\mathbf{s}_{i,k}]_{1:n_{x}})$. Then, substituting \eqref{eqn:state transition with projection} and \eqref{eqn:approx error in f}, we obtain $\widetilde{\mathbf{x}}_{k+1|k}=\mathbf{A}\bm{\Phi}(\mathbf{x}_{k})+\bm{\delta}_{f}(\mathbf{x}_{k})+\mathbf{w}_{k}+\bm{\eta}_{k}-\sum_{i=0}^{2n_{z}}\omega_{i}\hat{\mathbf{A}}_{k}\bm{\Phi}([\mathbf{s}_{i,k}]_{1:n_{x}})$. Now, linearizing $\bm{\Phi}(\cdot)$ at $\hat{\mathbf{x}}_{k|k}$ and introducing unknown diagonal matrix $\mathbf{U}^{\phi 1}_{k}\in\mathbb{R}^{n_{x}\times n_{x}}$ similar to \eqref{eqn:linearized x}, we have
\par\noindent\small
\begin{align}
    \widetilde{\mathbf{x}}_{k+1|k}&=(\mathbf{A}-\hat{\mathbf{A}}_{k})\bm{\Phi}(\hat{\mathbf{x}}_{k|k})+\mathbf{U}^{\phi 1}_{k}\mathbf{A}\nabla\bm{\Phi}(\hat{\mathbf{x}}_{k|k})\widetilde{\mathbf{x}}_{k|k}+\mathbf{w}_{k}\nonumber\\
    &\;\;\;+\bm{\eta}_{k}+\bm{\delta}_{f}(\mathbf{x}_{k}),\label{eqn:RKHS-UKF state prediction error}
\end{align}
\normalsize
where $\nabla\bm{\Phi}(\hat{\mathbf{x}}_{k|k})\doteq\frac{\partial\bm{\Phi}(\mathbf{x})}{\partial\mathbf{x}}\vert_{\mathbf{x}=\hat{\mathbf{x}}_{k|k}}$. Similarly, linearizing $\bm{\Phi}(\cdot)$ at $\hat{\mathbf{x}}_{k+1|k}$, introducing unknown diagonal matrix $\mathbf{U}^{\phi 2}_{k+1}\in\mathbb{R}^{n_{y}\times n_{y}}$ and using \eqref{eqn:RKHS-UKF observation aug}, \eqref{eqn:RKHS-UKF state update propagate}, \eqref{eqn:RKHS-UKF predicted y} and \eqref{eqn:approx error in h}, we obtain
\par\noindent\small
\begin{align}
    \widetilde{\mathbf{y}}_{k+1}&=(\mathbf{B}-\hat{\mathbf{B}}_{k})\bm{\Phi}(\hat{\mathbf{x}}_{k+1|k})+\mathbf{U}^{\phi 2}_{k+1}\mathbf{B}\nabla\bm{\Phi}(\hat{\mathbf{x}}_{k+1|k})\widetilde{\mathbf{x}}_{k+1|k}\nonumber\\
    &\;\;\;+\mathbf{v}_{k+1}+\bm{\delta}_{h}(\mathbf{x}_{k+1}).\label{eqn:RKHS-UKF measurement prediction error}
\end{align}
\normalsize
Denote $\mathbf{K}^{1}_{k}$ as the sub-matrix $[\mathbf{K}_{k}]_{(1:n_{x},:)}$ such that \eqref{eqn:RKHS-UKF state update} yields $\hat{\mathbf{x}}_{k|k}=\hat{\mathbf{x}}_{k|k-1}+\mathbf{K}^{1}_{k}\widetilde{\mathbf{y}}_{k}$ and $\widetilde{\mathbf{x}}_{k|k}=\widetilde{\mathbf{x}}_{k|k-1}-\mathbf{K}^{1}_{k}\widetilde{\mathbf{y}}_{k}$. Substituting for $\widetilde{\mathbf{x}}_{k|k}$ and $\widetilde{\mathbf{y}}_{k}$ (using \eqref{eqn:RKHS-UKF measurement prediction error}) in \eqref{eqn:RKHS-UKF state prediction error}, the RKHS-UKF's prediction error dynamics becomes
\par\noindent\small
\begin{align}
    \widetilde{\mathbf{x}}_{k+1|k}&=\mathbf{U}^{\phi 1}_{k}\mathbf{A}\nabla\bm{\Phi}(\hat{\mathbf{x}}_{k|k})(\mathbf{I}-\mathbf{K}^{1}_{k}\mathbf{U}^{\phi 2}_{k}\mathbf{B}\nabla\bm{\Phi}(\hat{\mathbf{x}}_{k|k-1}))\widetilde{\mathbf{x}}_{k|k-1}\nonumber\\
    &+\mathbf{w}_{k}-\mathbf{U}^{\phi 1}_{k}\mathbf{A}\nabla\bm{\Phi}(\hat{\mathbf{x}}_{k|k})\mathbf{K}^{1}_{k}\mathbf{v}_{k}+(\mathbf{A}-\hat{\mathbf{A}}_{k})\bm{\Phi}(\hat{\mathbf{x}}_{k|k})\nonumber\\
    &-\mathbf{U}^{\phi 1}_{k}\mathbf{A}\nabla\bm{\Phi}(\hat{\mathbf{x}}_{k|k})\mathbf{K}^{1}_{k}(\mathbf{B}-\hat{\mathbf{B}}_{k-1})\bm{\Phi}(\hat{\mathbf{x}}_{k|k-1})+\bm{\eta}_{k}\nonumber\\
    &+\bm{\delta}_{f}(\mathbf{x}_{k})-\mathbf{U}^{\phi 1}_{k}\mathbf{A}\nabla\bm{\Phi}(\hat{\mathbf{x}}_{k|k})\mathbf{K}^{1}_{k}\bm{\delta}_{h}(\mathbf{x}_{k}).\label{eqn:RKHS-UKF error dynamics}
\end{align}
\normalsize

Denote $\bm{\Sigma}_{k+1|k}$ as the filter's estimate of $\mathbb{E}[\widetilde{\mathbf{x}}_{k+1|k}\widetilde{\mathbf{x}}_{k+1|k}^{T}]$ and $\bm{\Sigma}^{xy}_{k+1}$ as the estimate of $\mathbb{E}[\widetilde{\mathbf{x}}_{k+1|k}\widetilde{\mathbf{y}}_{k+1}^{T}]$. These are appropriate sub-matrices of $\bm{\Sigma}^{z}_{k+1|k}$ from \eqref{eqn:RKHS-UKF sig predict} and $\bm{\Sigma}^{zy}_{k+1}$ from \eqref{eqn:RKHS-UKF sig zy predict}, respectively. Then, following similar steps as for forward UKF's stability, we obtain
\par\noindent\small
\begin{align}
    &\bm{\Sigma}_{k+1|k}=\mathbf{U}^{\phi 1}_{k}\mathbf{A}\nabla\bm{\Phi}(\hat{x}_{k|k})(\mathbf{I}-\mathbf{K}^{1}_{k}\mathbf{U}^{\phi 2}_{k}\mathbf{B}\nabla\bm{\Phi}(\hat{\mathbf{x}}_{k|k-1}))\bm{\Sigma}_{k|k-1}\nonumber\\
    &\times(\mathbf{I}-\mathbf{K}^{1}_{k}\mathbf{U}^{\phi 2}_{k}\mathbf{B}\nabla\bm{\Phi}(\hat{\mathbf{x}}_{k|k-1}))^{T}\nabla\bm{\Phi}(\hat{x}_{k|k})^{T}\mathbf{A}^{T}\mathbf{U}^{\phi 1}_{k}+\widetilde{\mathbf{Q}}_{k},\label{eqn:RKHS-UKF stability sig predict}\\
    &\bm{\Sigma}^{y}_{k+1}=\mathbf{U}^{\phi 2}_{k+1}\mathbf{B}\nabla\bm{\Phi}(\hat{\mathbf{x}}_{k+1|k})\bm{\Sigma}_{k+1|k}\nabla\bm{\Phi}(\hat{\mathbf{x}}_{k+1|k})^{T}\mathbf{B}^{T}\mathbf{U}^{\phi 2}_{k+1}\nonumber\\
    &\;\;\;+\widetilde{\mathbf{R}}_{k+1},\label{eqn:RKHS-UKF stability sig y predict}\\
    &\bm{\Sigma}^{xy}_{k+1}=\bm{\Sigma}_{k+1|k}\mathbf{U}^{xy}_{k+1}\nabla\bm{\Phi}(\hat{\mathbf{x}}_{k+1|k})^{T}\mathbf{B}^{T}\mathbf{U}^{\phi 2}_{k+1},\label{eqn:RKHS-UKF stability sig xy predict}
\end{align}
\normalsize
where\\ $\widetilde{\mathbf{Q}}_{k}=\mathbf{Q}_{k}+\mathbf{U}^{\phi 1}_{k}\mathbf{A}\nabla\bm{\Phi}(\hat{\mathbf{x}}_{k|k})\mathbf{K}^{1}_{k}\mathbf{R}_{k}(\mathbf{K}^{1}_{k})^{T}\nabla\bm{\Phi}(\hat{\mathbf{x}}_{k|k})^{T}\mathbf{A}^{T}\mathbf{U}^{\phi 1}_{k}\\+\delta\mathbf{P}_{k+1|k}+\Delta\mathbf{P}_{k+1|k}$ and $\widetilde{\mathbf{R}}_{k+1}=\mathbf{R}_{k+1}+\delta\mathbf{P}^{y}_{k+1}+\Delta\mathbf{P}^{y}_{k+1}$ with $\delta\mathbf{P}_{k+1|k}$, $\Delta\mathbf{P}_{k+1|k}$, $\delta\mathbf{P}^{y}_{k+1}$ and $\Delta\mathbf{P}^{y}_{k+1}$ defined similarly as for the UKF stability. Here, for simplicity, we have considered only the $n_{x}\geq n_{y}$ case, but the results can be trivially proved to hold for $n_{y}\geq n_{x}$ as well. Also, $\mathbf{U}^{xy}_{k+1}\in\mathbb{R}^{n_{x}\times n_{x}}$ is the unknown matrix introduced to account for errors in cross-covariance estimation. Finally, Theorem~\ref{theorem:RKHS-UKF stability} provides the stability conditions for RKHS-UKF.

\begin{theorem}[Stochastic stability of RKHS-UKF]
\label{theorem:RKHS-UKF stability}
Consider the RKHS-UKF for an unknown system model \eqref{eqn:state transition x} and \eqref{eqn:observation y} with the system satisfying \textbf{A1}-\textbf{A4}. The coefficient matrices estimates are projected as in \eqref{eqn:RKHS-UKF A estimate} and \eqref{eqn:RKHS-UKF B estimate} such that the estimates also satisfy the bounds in \textbf{A4}. The RKHS-UKF's estimation error $\widetilde{\mathbf{x}}_{k}$ is exponentially bounded in the mean-squared sense if the following hold true.\\
\textbf{C9.} There exist positive real numbers $\underline{\sigma}, \overline{\sigma}, \overline{\gamma}, \overline{\beta}, \widetilde{r}, \overline{\alpha}, \widetilde{q}, \overline{q}, \overline{r}, \overline{\phi}, \overline{f}$ and $\overline{h}$ such that the following bounds are fulfilled for all $k\geq 0$:
\par\noindent\small
\begin{align*}
    &\underline{\sigma}\mathbf{I}\preceq\bm{\Sigma}_{k|k-1}\preceq\overline{\sigma}\mathbf{I},\;\;\|\mathbf{U}^{\phi 1}_{k}\|\leq\overline{\alpha},\;\;\|\mathbf{U}^{\phi 2}_{k}\|\leq\overline{\beta},\;\;\|\mathbf{U}^{xy}_{k}\|\leq\overline{\gamma},\\ &\mathbf{Q}_{k}\preceq\overline{q}\mathbf{I},\;\;\mathbf{R}_{k}\leq\overline{r}\mathbf{I},\;\;\widetilde{\mathbf{Q}}_{k}\succeq\widetilde{q}\mathbf{I},\;\;\widetilde{\mathbf{R}}_{k}\succeq\widetilde{r}\mathbf{I},\;\;\|\nabla\bm{\Phi}(\cdot)\|\leq\overline{\phi},\\
    &\|\bm{\delta}_{f}(\cdot)\|_{2}\leq\overline{f},\;\;\|\bm{\delta}_{h}(\cdot)\|_{2}\leq\overline{h}.
\end{align*}
\normalsize
\textbf{C10.} The Jacobian $\nabla\bm{\Phi}(\mathbf{x})$ at any state $\mathbf{x}$, the actual coefficient matrix $\mathbf{A}$ and $\mathbf{U}^{\phi 1}_{k}$ are non-singular for every $k\geq 0$.\\
\textbf{C11.} The constants satisfy the inequality $\overline{\sigma}\overline{\gamma}\overline{\phi}^{2}\overline{\beta}^{2}\overline{b}^{2}<\widetilde{r}$.
\end{theorem}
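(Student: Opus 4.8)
The plan is to adapt the unknown-matrix Lyapunov methodology used for the forward UKF (Theorem~\ref{theorem:forward ukf stability}) to the extra error sources created by the simultaneous model learning. First I would rewrite the prediction-error recursion \eqref{eqn:RKHS-UKF error dynamics} compactly as $\widetilde{\mathbf{x}}_{k+1|k}=\mathbf{T}_k\widetilde{\mathbf{x}}_{k|k-1}+\mathbf{r}_k$, where the effective closed-loop transition matrix is $\mathbf{T}_k\doteq\mathbf{U}^{\phi 1}_k\mathbf{A}\nabla\bm{\Phi}(\hat{\mathbf{x}}_{k|k})(\mathbf{I}-\mathbf{K}^1_k\mathbf{U}^{\phi 2}_k\mathbf{B}\nabla\bm{\Phi}(\hat{\mathbf{x}}_{k|k-1}))$ and $\mathbf{r}_k$ collects the remaining terms: the driving noises $\mathbf{w}_k$ and $-\mathbf{U}^{\phi 1}_k\mathbf{A}\nabla\bm{\Phi}(\hat{\mathbf{x}}_{k|k})\mathbf{K}^1_k\mathbf{v}_k$, the coefficient-learning residuals $(\mathbf{A}-\hat{\mathbf{A}}_k)\bm{\Phi}(\hat{\mathbf{x}}_{k|k})$ and $-\mathbf{U}^{\phi 1}_k\mathbf{A}\nabla\bm{\Phi}(\hat{\mathbf{x}}_{k|k})\mathbf{K}^1_k(\mathbf{B}-\hat{\mathbf{B}}_{k-1})\bm{\Phi}(\hat{\mathbf{x}}_{k|k-1})$, the projection error $\bm{\eta}_k$, and the kernel-approximation residuals $\bm{\delta}_f(\mathbf{x}_k)$ and $-\mathbf{U}^{\phi 1}_k\mathbf{A}\nabla\bm{\Phi}(\hat{\mathbf{x}}_{k|k})\mathbf{K}^1_k\bm{\delta}_h(\mathbf{x}_k)$. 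I would then adopt the candidate stochastic Lyapunov function $V_k=\widetilde{\mathbf{x}}_{k|k-1}^T\bm{\Sigma}_{k|k-1}^{-1}\widetilde{\mathbf{x}}_{k|k-1}$, which by \textbf{C9} satisfies the sandwich $\overline{\sigma}^{-1}\|\widetilde{\mathbf{x}}_{k|k-1}\|_2^2\le V_k\le\underline{\sigma}^{-1}\|\widetilde{\mathbf{x}}_{k|k-1}\|_2^2$.

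The first key step is the contraction estimate for the homogeneous part. Equation~\eqref{eqn:RKHS-UKF stability sig predict} is precisely $\bm{\Sigma}_{k+1|k}=\mathbf{T}_k\bm{\Sigma}_{k|k-1}\mathbf{T}_k^T+\widetilde{\mathbf{Q}}_k$ with $\widetilde{\mathbf{Q}}_k\succeq\widetilde{q}\mathbf{I}$. Using \textbf{C10} (nonsingularity of $\nabla\bm{\Phi}$, $\mathbf{A}$ and $\mathbf{U}^{\phi 1}_k$) together with \textbf{C11}, which forces $\|\mathbf{K}^1_k\mathbf{U}^{\phi 2}_k\mathbf{B}\nabla\bm{\Phi}(\hat{\mathbf{x}}_{k|k-1})\|<1$ and hence invertibility of $(\mathbf{I}-\mathbf{K}^1_k\mathbf{U}^{\phi 2}_k\mathbf{B}\nabla\bm{\Phi})$, the matrix $\mathbf{T}_k$ is invertible. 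The push-through identity $\mathbf{T}_k^T(\mathbf{T}_k\bm{\Sigma}_{k|k-1}\mathbf{T}_k^T+\widetilde{\mathbf{Q}}_k)^{-1}\mathbf{T}_k=\bm{\Sigma}_{k|k-1}^{-1}-\bm{\Sigma}_{k|k-1}^{-1}(\bm{\Sigma}_{k|k-1}^{-1}+\mathbf{T}_k^T\widetilde{\mathbf{Q}}_k^{-1}\mathbf{T}_k)^{-1}\bm{\Sigma}_{k|k-1}^{-1}$ then yields $\mathbf{T}_k^T\bm{\Sigma}_{k+1|k}^{-1}\mathbf{T}_k\preceq(1-\lambda)\bm{\Sigma}_{k|k-1}^{-1}$ for a uniform $\lambda\in(0,1)$. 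Here $\lambda$ depends only on $\underline{\sigma},\overline{\sigma},\widetilde{q}$ and a uniform bound on $\|\mathbf{T}_k\|$; the latter follows by bounding $\|\mathbf{K}^1_k\|$ through $\bm{\Sigma}^y_k\succeq\widetilde{r}\mathbf{I}$ (from \eqref{eqn:RKHS-UKF stability sig y predict}, its first summand being p.s.d.) and the bound on $\bm{\Sigma}^{xy}_k$ from \eqref{eqn:RKHS-UKF stability sig xy predict}, combined with the norm bounds of \textbf{C9} and \textbf{A4}.

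Next I would control the inhomogeneous part, which is the genuinely new content relative to the forward-UKF proof: I must show $\mathbb{E}[\|\mathbf{r}_k\|_2^2]$ is uniformly bounded. The noise contributions have second moments bounded by $\overline{q}$ and by $\overline{\alpha}^2\overline{a}^2\overline{\phi}^2\|\mathbf{K}^1_k\|^2\overline{r}$; the approximation residuals $\bm{\delta}_f,\bm{\delta}_h$ are bounded by $\overline{f},\overline{h}$ (\textbf{C9}); the projection error $\bm{\eta}_k$ is bounded since \textbf{A2} confines the state to the compact $\mathcal{X}$; and, crucially, the learning residuals are bounded because the Gaussian kernel (\textbf{A1}) gives $\|\bm{\Phi}(\cdot)\|_2\le\sqrt{L}$ on the finite dictionary (\textbf{A3}), while the projections $\Gamma_a,\Gamma_b$ in \eqref{eqn:RKHS-UKF A estimate} and \eqref{eqn:RKHS-UKF B estimate} keep $\hat{\mathbf{A}}_k,\hat{\mathbf{B}}_{k-1}$ in the \textbf{A4} ball, so $\|\mathbf{A}-\hat{\mathbf{A}}_k\|\le 2\overline{a}$ and $\|\mathbf{B}-\hat{\mathbf{B}}_{k-1}\|\le 2\overline{b}$. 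Expanding $V_{k+1}=(\mathbf{T}_k\widetilde{\mathbf{x}}_{k|k-1}+\mathbf{r}_k)^T\bm{\Sigma}_{k+1|k}^{-1}(\mathbf{T}_k\widetilde{\mathbf{x}}_{k|k-1}+\mathbf{r}_k)$ and absorbing the cross term via the weighted Young's inequality $2\mathbf{a}^T\bm{\Sigma}_{k+1|k}^{-1}\mathbf{b}\le\mu\,\mathbf{a}^T\bm{\Sigma}_{k+1|k}^{-1}\mathbf{a}+\mu^{-1}\mathbf{b}^T\bm{\Sigma}_{k+1|k}^{-1}\mathbf{b}$ with $\mu<\lambda/(1-\lambda)$ (the zero-mean $\mathbf{w}_k,\mathbf{v}_k$ contributing nothing to the cross term in conditional expectation) gives the drift inequality $\mathbb{E}[V_{k+1}\mid\widetilde{\mathbf{x}}_{k|k-1}]\le(1-\lambda')V_k+\nu'$ with $\lambda'\in(0,1)$ and finite $\nu'$. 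Invoking the exponential-boundedness lemma of \cite{reif1999stochastic} on $V_k$, passing back through the sandwich bound to $\mathbb{E}[\|\widetilde{\mathbf{x}}_{k|k-1}\|_2^2]$, and finally using $\widetilde{\mathbf{x}}_{k|k}=\widetilde{\mathbf{x}}_{k|k-1}-\mathbf{K}^1_k\widetilde{\mathbf{y}}_k$, establishes the claim.

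I expect the main obstacle to be exactly the uniform bounding of the learning-residual terms. Unlike the forward UKF, the effective ``process noise'' $\mathbf{r}_k$ contains the state-dependent, non-zero-mean quantities $(\mathbf{A}-\hat{\mathbf{A}}_k)\bm{\Phi}(\hat{\mathbf{x}}_{k|k})$ and $(\mathbf{B}-\hat{\mathbf{B}}_{k-1})\bm{\Phi}(\hat{\mathbf{x}}_{k|k-1})$, which are statistically coupled to $\widetilde{\mathbf{x}}_{k|k-1}$. The argument must therefore lean on the optional projections $\Gamma_a,\Gamma_b$ and on the boundedness of $\bm{\Phi}$ to treat these as bounded additive perturbations that enlarge $\nu'$ without touching the contraction constant $\lambda$; tuning the Young's-inequality split so that $\lambda'>0$ is preserved while $\nu'$ stays finite is the delicate bookkeeping that makes the whole scheme close.
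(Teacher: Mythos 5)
Your proposal follows the same overall architecture as the paper's proof: linearize the prediction-error recursion \eqref{eqn:RKHS-UKF error dynamics} into a homogeneous part driven by the closed-loop matrix $\mathbf{U}^{\phi 1}_k\mathbf{A}\nabla\bm{\Phi}(\hat{\mathbf{x}}_{k|k})(\mathbf{I}-\mathbf{K}^1_k\mathbf{U}^{\phi 2}_k\mathbf{B}\nabla\bm{\Phi}(\hat{\mathbf{x}}_{k|k-1}))$ plus residuals, use the Lyapunov function $V_k=\widetilde{\mathbf{x}}_{k|k-1}^T\bm{\Sigma}_{k|k-1}^{-1}\widetilde{\mathbf{x}}_{k|k-1}$, obtain the contraction $\mathbf{T}_k^T\bm{\Sigma}_{k+1|k}^{-1}\mathbf{T}_k\preceq(1-\lambda)\bm{\Sigma}_{k|k-1}^{-1}$ from \eqref{eqn:RKHS-UKF stability sig predict} with $\widetilde{\mathbf{Q}}_k\succeq\widetilde{q}\mathbf{I}$, bound the gain via $\bm{\Sigma}^y_k\succeq\widetilde{r}\mathbf{I}$, and invoke the lemma of \cite{reif1999stochastic}; your identification of the learning residuals $(\mathbf{A}-\hat{\mathbf{A}}_k)\bm{\Phi}(\hat{\mathbf{x}}_{k|k})$ and $(\mathbf{B}-\hat{\mathbf{B}}_{k-1})\bm{\Phi}(\hat{\mathbf{x}}_{k|k-1})$ as the genuinely new difficulty, and their control via $\|\bm{\Phi}(\cdot)\|_2\le\sqrt{L}$ and the projections $\Gamma_a,\Gamma_b$, is exactly what the paper's Claim~\ref{claim:RKHS-UKF Vk terms} does. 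The one place you genuinely diverge is the cross term between $\mathbf{T}_k\widetilde{\mathbf{x}}_{k|k-1}$ and the non-zero-mean residuals: you absorb it with a weighted Young's inequality, trading the contraction rate $\lambda$ for a smaller $\lambda'$; the paper instead observes that $\|\widetilde{\mathbf{x}}_{k|k-1}\|_2\le\epsilon+\overline{a}\sqrt{L}$ is itself deterministically bounded (compactness of $\mathcal{X}$ plus boundedness of the kernel prediction), so the entire cross term is pushed into the additive constant $\nu$ with $\lambda$ untouched (the terms $r_k,q_k$ in the paper's expansion). Both closings are valid; yours is arguably more standard and would survive without the deterministic error bound, while the paper's yields a sharper decay rate.

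One imprecision worth fixing: you assert the projection error $\bm{\eta}_k$ is bounded ``since \textbf{A2} confines the state to the compact $\mathcal{X}$.'' That is not quite right -- $\bm{\eta}_k=\mathbf{x}_{k+1}-f(\mathbf{x}_k)-\mathbf{w}_k$ contains the unbounded Gaussian $\mathbf{w}_k$, so it is not deterministically bounded. The correct argument (the paper's Claim~\ref{claim:RKHS-UKF exp terms}) uses the argmin definition of $\Gamma(\cdot)$ with the competitor $f(\mathbf{x}_k)\in\mathcal{X}$ to get $\|\bm{\eta}_k\|_2\le\|\mathbf{w}_k\|_2$, whence $\mathbb{E}[\|\bm{\eta}_k\|_2^2]\le\overline{q}n_x$ and the $\mathbf{w}_k$--$\bm{\eta}_k$ cross term is handled by Cauchy--Schwarz. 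Since your scheme only needs a second-moment bound on $\mathbf{r}_k$, this repair slots in without disturbing the rest of the argument.
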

\begin{proof}
    See Appendix~\ref{App-thm-RKHS-UKF}.
\end{proof}
Note that for the Gaussian kernel $K(\cdot,\cdot)$, the Jacobian at state estimate $\hat{\mathbf{x}}$ is given as $\nabla\bm{\Phi}(\hat{\mathbf{x}})=\frac{2}{\sigma^{2}}\begin{bsmallmatrix}
    K(\widetilde{\mathbf{x}}_{1},\mathbf{x})(\widetilde{\mathbf{x}}_{1}-\hat{\mathbf{x}})^{T}\\
    \vdots\\
    K(\widetilde{\mathbf{x}}_{L},\mathbf{x})(\widetilde{\mathbf{x}}_{L}-\hat{\mathbf{x}})^{T}
    \end{bsmallmatrix}$. Since $K(\cdot,\cdot)$ is a bounded function,  $\|\nabla\bm{\Phi}(\cdot)\|\leq\overline{\phi}$ implies $\widetilde{\mathbf{x}}_{l}-\hat{\mathbf{x}}$ is bounded where $\hat{\mathbf{x}}$ is any state estimate and $\widetilde{\mathbf{x}}_{l}$ is a dictionary element obtained from the state estimates themselves. Hence, this assumption implies our computed state estimates (more specifically, the difference between these estimates) lie in a bounded set. However, this set need not be the same as the compact set $\mathcal{X}$ of the true states in \textbf{A2}.

\section{Numerical Experiments}
\label{sec:numerical}
We demonstrate the performance of the proposed filters by comparing the estimation error with RCRLB for different example systems. Additionally, we compare the relative performance of the inverse filters with the corresponding forward filters. Note that, in general, a non-linear filter's performance also depends on the system itself, such that choosing an appropriate filter involves a trade-off between accuracy and computational efforts\cite{li2017approximate}. The same argument also holds for non-linear inverse filters. RCRLB provides a lower bound on mean-squared error (MSE) for discrete-time non-linear filtering as $\mathbb{E}\left[(\mathbf{x}_{k}-\hat{\mathbf{x}}_{k})(\mathbf{x}_{k}-\hat{\mathbf{x}}_{k})^{T}\right]\succeq\mathbf{J}_{k}^{-1}$ where $\mathbf{J}_{k}=\mathbb{E}\left[-\frac{\partial^{2}\ln{p(Y^{k},X^{k})}}{\partial\mathbf{x}_{k}^{2}}\right]$ is the Fisher information matrix\cite{tichavsky1998posterior}. Here, $X^{k}=\lbrace\mathbf{x}_{0},\mathbf{x}_{1},\hdots,\mathbf{x}_{k}\rbrace$ is the state vector series while $Y^{k}=\lbrace\mathbf{y}_{0},\mathbf{y}_{1},\hdots,\mathbf{y}_{k}\rbrace$ are the noisy observations. Also, $p(Y^{k},X^{k})$ is the joint probability density of pair $(Y^{k},X^{k})$ and $\hat{\mathbf{x}}_{k}$ is an estimate of $\mathbf{x}_{k}$ with $\frac{\partial^{2}(\cdot)}{\partial\mathbf{x}^{2}}$ denoting the Hessian.

The information matrix $\mathbf{J}_{k}$ is computed recursively as $\mathbf{J}_{k}=\mathbf{D}_{k}^{22}-\mathbf{D}_{k}^{21}(\mathbf{J}_{k-1}+\mathbf{D}_{k}^{11})^{-1}\mathbf{D}_{k}^{12}$, where $\mathbf{D}_{k}^{11}=\mathbb{E}\left[-\frac{\partial^{2}\ln{p(\mathbf{x}_{k}\vert\mathbf{x}_{k-1})}}{\partial\mathbf{x}_{k-1}^{2}}\right]$, $\mathbf{D}_{k}^{12}=\mathbb{E}\left[-\frac{\partial^{2}\ln{p(\mathbf{x}_{k}\vert\mathbf{x}_{k-1})}}{\partial\mathbf{x}_{k}\partial\mathbf{x}_{k-1}}\right]=(\mathbf{D}_{k}^{21})^{T}$ and $\mathbf{D}_{k}^{22}=\mathbb{E}\left[-\frac{\partial^{2}\ln{p(\mathbf{x}_{k}\vert\mathbf{x}_{k-1})}}{\partial\mathbf{x}_{k}^{2}}\right]+\mathbb{E}\left[-\frac{\partial^{2}\ln{p(\mathbf{y}_{k}\vert\mathbf{x}_{k})}}{\partial\mathbf{x}_{k}^{2}}\right]$ \cite{tichavsky1998posterior}. For the non-linear system given by \eqref{eqn:state transition x} and \eqref{eqn:observation y}, the forward information matrices $\lbrace\mathbf{J}_{k}\rbrace$ recursions are $\mathbf{J}_{k+1}=\mathbf{H}_{k+1}^{T}\mathbf{R}_{k+1}^{-1}\mathbf{H}_{k+1}-\mathbf{Q}_{k}^{-1}\mathbf{F}_{k}(\mathbf{J}_{k}+\mathbf{F}_{k}^{T}\mathbf{Q}_{k}^{-1}\mathbf{F}_{k})^{-1}\mathbf{F}_{k}^{T}\mathbf{Q}_{k}^{-1}+\mathbf{Q}_{k}^{-1}$, where $\mathbf{F}_{k}=\frac{\partial f(\mathbf{x})}{\partial\mathbf{x}}\vert_{\mathbf{x}=\mathbf{x}_{k}}$ and $\mathbf{H}_{k}=\frac{\partial h(\mathbf{x})}{\partial\mathbf{x}}\vert_{\mathbf{x}=\mathbf{x}_{k}}$\cite{xiong2006performance_ukf}. These recursions can be trivially extended to compute the information matrix $\overline{\mathbf{J}}_{k}$ for inverse filter's estimate $\doublehat{\mathbf{x}}_{k}$. Throughout all experiments, the initial information matrices $\mathbf{J}_{0}$ and $\overline{\mathbf{J}}_{0}$ were set to $\bm{\Sigma}_{0}^{-1}$ and $\overline{\bm{\Sigma}}_{0}^{-1}$, respectively.

Besides achieving RCRLB, an estimator also needs to be credible, i.e., its estimated error covariance $\bm{\Sigma}$ is statistically close to the actual MSE matrix $\mathbf{P}$. In \cite{li2001practical}, averaged normalized estimation error squared (ANEES) and non-credibility indices (NCI) have been proposed as credibility measures. However, NCI is preferable for comparing different estimators because it penalizes optimism and pessimism to the same degree. An optimistic (pessimistic) estimator's $\bm{\Sigma}$ is statistically smaller (larger) than $\mathbf{P}$. Define $NCI=(10/M)\sum_{m=1}^{M}\log_{10}(\epsilon_{m}/\epsilon^{*}_{m})$ where $\epsilon_{m}=\widetilde{\mathbf{x}}_{m}^{T}\bm{\Sigma}_{m}^{-1}\widetilde{\mathbf{x}}_{m}$ and $\epsilon^{*}_{m}=\widetilde{\mathbf{x}}_{m}^{T}\mathbf{P}_{m}^{-1}\widetilde{\mathbf{x}}_{m}$ with $M$ as the total number of Monte-Carlo independent runs. Here, $\widetilde{\mathbf{x}}_{m}=\mathbf{x}_{m}-\hat{\mathbf{x}}_{m}$ is the estimation error at $m$-th run for actual state $\mathbf{x}_{m}$ and its estimate $\hat{\mathbf{x}}_{m}$. A perfect NCI is $0$ while positive (negative) NCI represents optimism (pessimism).

\vspace{-8pt}
\subsection{FM demodulation with I-UKF}
\label{subsec: FM demod}
Consider the discrete-time non-linear system model of FM demodulator \cite[Sec. 8.2]{anderson2012optimal}
\par\noindent\small
\begin{align*}
&\mathbf{x}_{k+1}\doteq\begin{bsmallmatrix}\lambda_{k+1}\\\theta_{k+1}\end{bsmallmatrix}=\begin{bsmallmatrix}\exp{(-T/\beta)}&0\\-\beta \exp{(-T/\beta)}-1&1\end{bsmallmatrix}\begin{bsmallmatrix}\lambda_{k}\\\theta_{k}\end{bsmallmatrix}+\begin{bsmallmatrix}1\\-\beta\end{bsmallmatrix}w_{k},
\end{align*}
\normalsize
with observation $\mathbf{y}_{k}=\sqrt{2}[\sin{\theta_{k}},\cos{\theta_{k}}]^{T}+\mathbf{v}_{k}$ and $a_{k}=\hat{\lambda}_{k}^{2}+\epsilon_{k}$. Here, $w_{k}\sim\mathcal{N}(0,0.01)$, $\mathbf{v}_{k}\sim\mathcal{N}(\mathbf{0},\mathbf{I}_{2})$, $\epsilon_{k}\sim\mathcal{N}(0,5)$, $T=2\pi/16$ and $\beta=100$. Also, $\hat{\lambda}_{k}$ is the forward filter's estimate of $\lambda_{k}$. For this system, EKF is observed to be more accurate than UKF in the forward filtering case. Here, we compare I-UKF's and I-EKF's performance. For forward and inverse UKF, $\kappa$ and $\overline{\kappa}$ both were set to $1$, but I-UKF assumed the forward UKF's $\kappa$ to be $2$. Other parameters and initial estimates were as in \cite{singh2022inverse_part1}.

Fig.~\ref{fig:fmdemod iukf} shows the time-averaged root MSE (RMSE), RCRLB and NCI for state estimation for forward and inverse UKF and EKF averaged over 500 runs. The RCRLB value for state estimation is $\sqrt{\textrm{Tr}(\mathbf{J}^{-1})}$ with $\mathbf{J}$ denoting the associated information matrix. We also consider inverse filters with a forward filter that is not the same as the true forward filter. For instance, IUKF-E in Fig.~\ref{fig:fmdemod iukf}a denotes the I-UKF's estimation error which assumes a forward UKF when the true forward filter is EKF. The other notations in Fig.~\ref{fig:fmdemod iukf} and also, in further experiments are similarly defined. From Fig.~\ref{fig:fmdemod iukf}, we observe that the forward EKF has lower estimation error but higher NCI than forward UKF. Hence, with correct forward filter assumption, IUKF-U has a higher error than IEKF-E. However, IUKF-E outperforms I-EKF even with incorrect forward filter assumption. On the other hand, incorrect forward filter assumption degrades I-EKF's performance, i.e., IEKF-U has lower estimation accuracy and higher NCI than IEKF-E. While all filters considered here are optimistic, I-UKF is the most credible filter. I-UKF also outperforms forward UKF in terms of credibility because it uses additional true state $\mathbf{x}_{k}$ information. Note that even though I-UKF assumes forward UKF's $\kappa$ to be different from its true value, IUKF-U performs better than the forward UKF.
%---------------------------------------------------------------------
\begin{figure}
  \centering
  \includegraphics[width = 1.0\columnwidth]{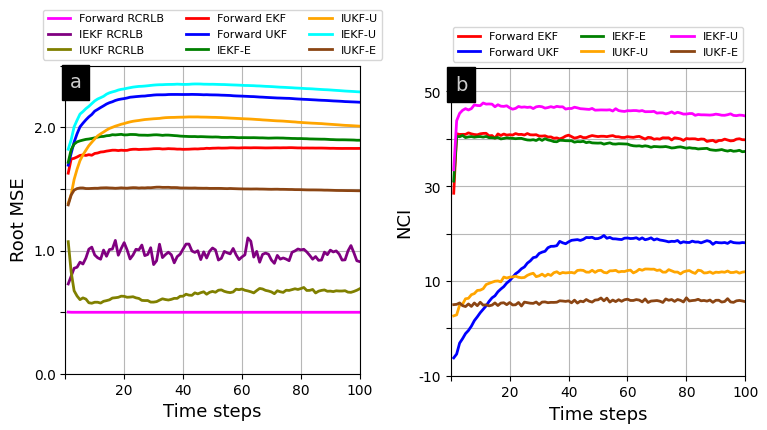}
  \caption{(a) Time-averaged RMSE and RCRLB, and (b) NCI for forward and inverse UKF for FM demodulator system.\vspace{-15pt}}
 \label{fig:fmdemod iukf}
\end{figure}
%-----------------------------------------------------

\subsection{Vehicle reentry with I-UKF}
\label{subsec: vehicle reentry}
Consider a radar tracking a vehicle's reentry using range and bearing measurements, widely used to illustrate UKF's performance \cite{julier2004unscented,sarkka2007unscented}. Here, we consider I-UKF's performance under both correct and incorrect forward filter assumptions. We denote the position of the vehicle at $k$-th time instant as $[\mathbf{x}_{k}]_{1}$ and $[\mathbf{x}_{k}]_{2}$, its velocity as $[\mathbf{x}_{k}]_{3}$ and $[\mathbf{x}_{k}]_{4}$, and its constant aerodynamic parameter as $[\mathbf{x}_{k}]_{5}$. The vehicle's state continuous-time evolution follows $[\dot{\mathbf{x}}_{k}]_{1}=[\mathbf{x}_{k}]_{3}$, $[\dot{\mathbf{x}}_{k}]_{2}=[\mathbf{x}_{k}]_{4}$, $[\dot{\mathbf{x}}_{k}]_{3}=d_{k}[\mathbf{x}_{k}]_{3}+g_{k}[\mathbf{x}_{k}]_{1}+w_{1}$, $[\dot{\mathbf{x}}_{k}]_{4}=d_{k}[\mathbf{x}_{k}]_{4}+g_{k}[\mathbf{x}_{k}]_{2}+w_{2}$, $[\dot{\mathbf{x}}_{k}]_{5}=w_{3}$, where $[\dot{\mathbf{x}}_{k}]_{i}$ is the first-order partial derivative of $[\mathbf{x}_{k}]_{i}$ with respect to time, and $w_{1}, w_{2}$ and $w_{3}$ represent process noise. We consider the discretized version of this system with a time step of $0.1$ sec in our experiment. The quantities $d_{k}=\beta_{k}\exp{(\left(\rho_{0}-\rho_{k})/h_{0}\right)}V_{k}$ and $g_{k}=-Gm_{0}\rho_{k}^{-3}$ where $\beta_{k}=\beta_{0}\exp{([\mathbf{x}_{k}]_{5})}$, $V_{k}=\sqrt{[\mathbf{x}_{k}]_{3}^{2}+[\mathbf{x}_{k}]_{4}^{2}}$ and $\rho_{k}=\sqrt{[\mathbf{x}_{k}]_{1}^{2}+[\mathbf{x}_{k}]_{2}^{2}}$ with $\rho_{0}$, $h_{0}$, $G$, $m_{0}$ and $\beta_{0}$ as constants. The radar's range and bearing measurements are $[\mathbf{y}_{k}]_{1}=\sqrt{([\mathbf{x}_{k}]_{1}-\rho_{0})^{2}+[\mathbf{x}_{k}]_{2}^{2}}+v_{1}$, and $[\mathbf{y}_{k}]_{2}=\tan^{-1}{\left(\frac{[\mathbf{x}_{k}]_{2}}{[\mathbf{x}_{k}]_{1}-\rho_{0}}\right)}+v_{2}$, where $v_{1}$ and $v_{2}$ represent measurement noises \cite{julier2004unscented}. 

For the inverse filter, we consider a linear observation 
%\begin{align*}
    $\mathbf{a}_{k}=\left[[\hat{\mathbf{x}}_{k}]_{1},[\hat{\mathbf{x}}_{k}]_{2}\right]^{T}+\bm{\epsilon}_{k}$, 
%\end{align*}
where $\bm{\epsilon}_{k}\sim\mathcal{N}(\mathbf{0},3\mathbf{I}_{2})$. The initial state was $\mathbf{x}_{0}=[6500.4,349.14,-1.8093,-6.7967,0.6932]^{T}$. The initial state estimate $\Hat{\Hat{\mathbf{x}}}_{k}$ for I-UKF was set to actual $\mathbf{x}_{0}$ with initial covariance estimate $\overline{\bm{\Sigma}}_{0}=diag(10^{-5},10^{-5},10^{-5},10^{-5},1)$. For forward UKF, $\kappa$ was chosen as $2.5$ such that the weight for $0$-th sigma point at $\hat{\mathbf{x}}_{k}$ is $1/3$, and all other sigma points have equal weights. Similarly, $\overline{\kappa}$ of I-UKF was set to $3.5$. Here, we assumed that the forward UKF's $\kappa$ was perfectly known to I-UKF. 
All other system parameters and initial estimates were identical to \cite{julier2004unscented}.

Fig. \ref{fig:reentry iukf} shows the (root) time-averaged error in position estimation, its RCRLB (also, time-averaged) and NCI for forward and inverse UKF (IUKF-U), including forward EKF and IUKF-E which incorrectly assumes the forward filter to be UKF when the adversary's actual forward filter is EKF. Here, the RCRLB is computed as $\sqrt{[\mathbf{J}^{-1}]_{1,1}+[\mathbf{J}^{-1}]_{2,2}}$. The I-UKF's error are observed to be lower than that of forward UKF, as is the case with their corresponding RCRLBs. I-UKF's NCI is approximately $0$ (perfect NCI) while forward UKF and EKF are pessimistic. Further, incorrect forward filter assumption (IUKF-E case) does not affect the I-UKF's estimation because forward UKF and EKF have similar performances. For the vehicle re-entry example, the I-EKF's error and NCI were similar to I-UKF and hence, omitted in Fig. \ref{fig:reentry iukf}.
%---------------------------------------------------------------------
\begin{figure}
  \centering
  \includegraphics[width = 1.0\columnwidth]{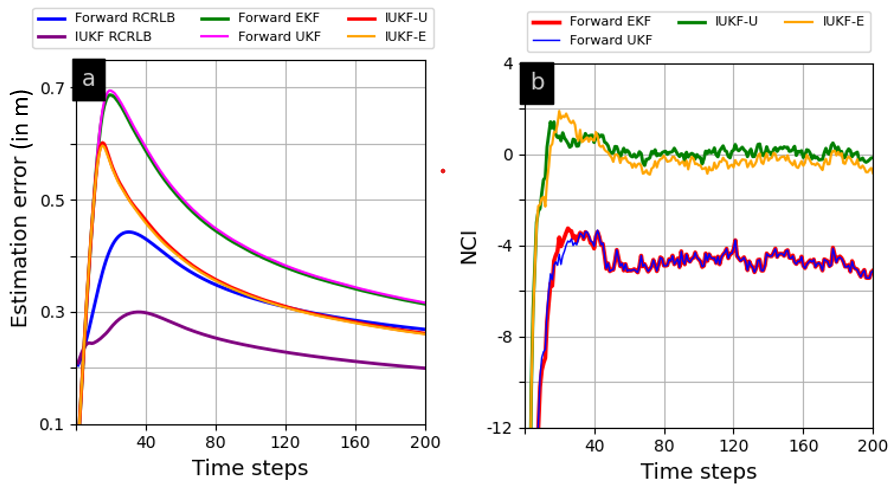}
  \caption{(a) Time-averaged estimation error and RCRLB, and (b) NCI for forward and inverse UKF for vehicle reentry system averaged over $100$ runs.\vspace{-15pt}}
 \label{fig:reentry iukf}
\end{figure}
%-----------------------------------------------------

\subsection{FM demodulation with RKHS-UKF}
\label{subsec: rkhs-ukf fm demod}
Recall the FM demodulator system and compare RKHS-UKF's estimation accuracy with forward and inverse EKF (with perfect system model information) and RKHS-EKF\cite{singh2022inverse_part2} (without any prior information). Similar to \cite[VI-C]{singh2022inverse_part2}, the attacker knows its observation function and employs a simplified RKHS-UKF/EKF. The defender learns both its state evolution and observation model using the general RKHS-UKF/EKF as its inverse filter (denoted by I-RKHS-UKF/EKF). Note that forward RKHS-UKF/EKF and I-RKHS-UKF/EKF are essentially the same algorithms but employed by different agents to compute their desired estimates. Considering a Gaussian kernel, all the parameters of the forward and inverse RKHS-based filters were the same as in \cite[VI-C]{singh2022inverse_part2}. For RKHS-UKF, $\kappa$ was set to $5$ and $3$, respectively, for the forward and inverse filters.

Fig.~\ref{fig:rkhs fmdemod} shows the time-averaged RMSE and NCI for forward and inverse EKF, RKHS-EKF, and RKHS-UKF. We omit the forward and inverse UKF performances, which were observed to be less accurate than forward and inverse EKF in Section~\ref{subsec: FM demod}. For I-EKF and I-RKHS-EKF, the true forward filters are EKF and RKHS-EKF, respectively. On the other hand, I-RKHS-UKF-1, I-RKHS-UKF-2, and I-RKHS-UKF-3 have RKHS-UKF, EKF, and RKHS-EKF, respectively, as true forward filters. From Fig.~\ref{fig:rkhs fmdemod}a, we observe that both forward RKHS-EKF and RKHS-UKF have higher errors than forward EKF because they do not perfectly know the defender's state evolution function. On the other hand, both forward RKHS-EKF and RKHS-UKF are more credible than forward EKF, but optimistic and pessimistic, respectively. However, forward RKHS-UKF is more accurate than RKHS-EKF. While all RKHS-based inverse filters perform better than I-EKF without any prior system model information, I-RKHS-UKF outperforms both I-EKF and I-RKHS-EKF when estimating the same state, i.e., I-RKHS-UKF-2 and I-RKHS-UKF-3 have lower error than I-EKF and I-RKHS-EKF, respectively. The improved performance for the RKHS-based inverse filters owes to the fact that we use a kernel function approximation in RKHS-based filters instead of the first-order Taylor approximation as in I-EKF for the non-linear functions. However, RKHS-EKF linearizes the non-linear kernel function for expectation computations and thus, introduces linearization errors. RKHS-UKF, on the other hand, considers the unscented transform and hence, also outperforms RKHS-EKF. Interestingly, even though RKHS-UKF is pessimistic, it has a similar NCI for all cases, including when employed as a forward filter (forward RKHS-UKF) and hence, is more robust than all other filters.
%---------------------------------------------------------------------
\begin{figure}
  \centering
  \includegraphics[width = 1.0\columnwidth]{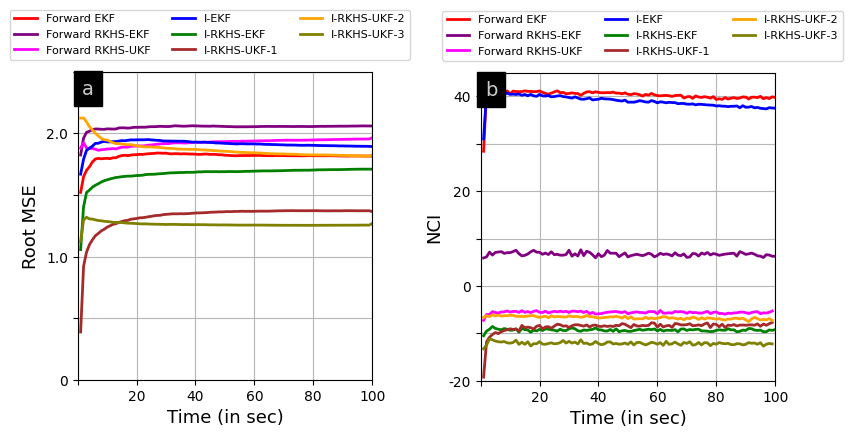}
  \caption{(a) Time-averaged RMSE, and (b) NCI for forward and inverse RKHS-UKF for FM demodulator system, compared with forward and inverse EKF and RKHS-EKF.\vspace{-15pt}}
 \label{fig:rkhs fmdemod}
\end{figure}
%-----------------------------------------------------

\subsection{Lorenz system with RKHS-UKF}
\label{subsec: rkhs-ukf lorenz system}
Consider the $3$-dimensional Lorenz system\cite{ito2000gaussian}
\par\noindent\small
\begin{align*}
&\mathbf{x}_{k+1}=\begin{bsmallmatrix}
    [\mathbf{x}_{k}]_{1}+\Delta tr_{1}(-[\mathbf{x}_{k}]_{1}+[\mathbf{x}_{k}]_{2})\\
    [\mathbf{x}_{k}]_{2}+\Delta t(r_{2}[\mathbf{x}_{k}]_{1}-[\mathbf{x}_{k}]_{2}-[\mathbf{x}_{k}]_{1}[\mathbf{x}_{k}]_{3})\\
    [\mathbf{x}_{k}]_{3}+\Delta t(-r_{3}[\mathbf{x}_{k}]_{3}+[\mathbf{x}_{k}]_{1}[\mathbf{x}_{k}]_{2})
\end{bsmallmatrix}+\begin{bsmallmatrix}
    0\\0\\0.5
\end{bsmallmatrix}w_{k},
\end{align*}
\normalsize
with $y_{k}=\Delta t\sqrt{([\mathbf{x}_{k}]_{1}-0.5)^{2}+[\mathbf{x}_{k}]_{2}^{2}+[\mathbf{x}_{k}]_{3}^{2}}+0.065v_{k}$ and $a_{k}=\Delta t\sqrt{[\hat{\mathbf{x}}_{k}]_{1}^{2}+([\hat{\mathbf{x}}_{k}]_{2}-0.5)^{2}+[\hat{\mathbf{x}}_{k}]_{3}^{2}}+0.1\epsilon_{k}$, where $w_{k}, v_{k}, \epsilon_{k}\sim\mathcal{N}(0,\Delta t)$ with parameters $\Delta t=0.01$, $r_{1}=10$, $r_{2}=28$ and $r_{3}=8/3$. Here, we compare forward and inverse UKF (with perfect system model information) with RKHS-UKF (without any prior information). The considered system is mathematically interesting because of its three unstable equilibria\cite{ito2000gaussian}. For the attacker's state estimate, we employ a forward RKHS-UKF but unlike the FM demodulator application, the observation function is not known to the attacker. For both forward and inverse RKHS-UKF, we chose $\kappa=3$ and Gaussian kernel's width $\sigma^{2}=20$. The dictionaries were constructed using the sliding window criterion with a window length $15$. The initial coefficient matrix estimates ($\hat{\mathbf{A}}_{0}, \hat{\mathbf{B}}_{0}$) and noise covariance matrix estimates ($\hat{\mathbf{Q}}_{0},\hat{\mathbf{R}}_{0}$) were set to appropriate (size) all ones and identity matrices, respectively. We compare the RKHS-UKF-based forward and inverse filters with a forward UKF with $\kappa=1.5$ and I-UKF with $\overline{\kappa}=2$, respectively. We initialize $\mathbf{x}_{0}=[-0.2,-0.3,-0.5]^{T}=\doublehat{\mathbf{x}}_{0}$ and $\hat{\mathbf{x}}_{0}=[1.35,-3,6]^{T}$, while all initial covariances were set to $0.35\mathbf{I}$.

Fig.~\ref{fig:rkhs lorenz} shows the time-averaged RMSE and NCI for forward and inverse RKHS-UKF and UKF, including the incorrect forward filter assumption case I-UKF-R which assumes a forward UKF instead of the actual forward RKHS-UKF. The true forward filters for I-RKHS-UKF and I-RKHS-UKF-U are RKHS-UKF and UKF, respectively. Similar to Fig.~\ref{fig:rkhs fmdemod}a, we observe that without perfect system information, forward RKHS-UKF has a higher estimation error than the forward UKF. While I-UKF has the same accuracy as its forward UKF, I-RKHS-UKF (I-RKHS-UKF-U case) could not achieve the same accuracy by learning the system model on its own. However, even though forward RKHS-UKF is not accurate, I-RKHS-UKF is able to learn its state estimate's evolution and has the lowest estimation error. Interestingly, I-UKF-R with incorrect forward filter assumption also shows a lower error than its forward RKHS-UKF. From Fig.~\ref{fig:rkhs lorenz}b, we observe that only I-RKHS-UKF has stable performance in terms of credibility. I-RKHS-UKF-U is slightly optimistic while I-RKHS-UKF is pessimistic.
%---------------------------------------------------------------------
\begin{figure}
  \centering
  \includegraphics[width = 1.0\columnwidth]{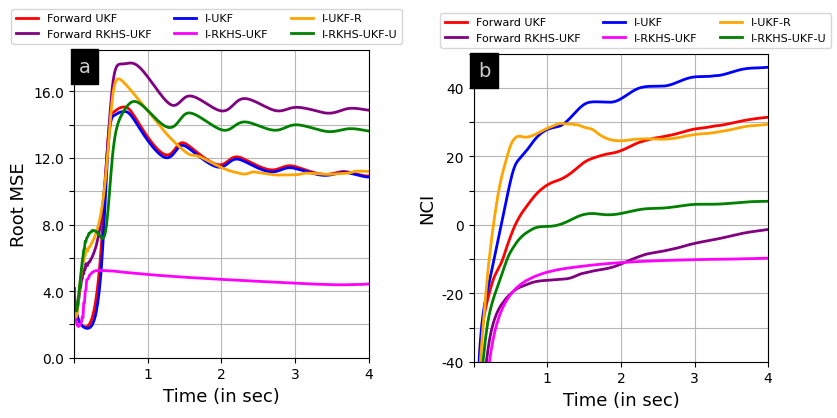}
  \caption{(a) Time-averaged RMSE, and (b) Time-averaged NCI for forward and inverse RKHS-UKF for Lorenz system, compared with forward and inverse UKF, averaged over $50$ runs.\vspace{-15pt}}
 \label{fig:rkhs lorenz}
\end{figure}
%-----------------------------------------------------

\section{Summary}
\label{sec:summary}
For the inverse cognition problem in counter-adversarial systems, we developed unscented transform-based inverse non-linear filters. Our basic I-UKF assumes perfect system information. We also suitably modify it for other general scenarios of non-Gaussian noises, continuous-time state evolution, and complex-valued systems. Our theoretical guarantees state that, if the attacker's forward UKF is stable, then the I-UKF is also stable under mild conditions. Numerical results suggested that the inverse filters provide reasonable estimates, even while incorrectly assuming the form of the forward filter. I-UKF also outperforms forward UKF because of the perfect actual state information. When the prior system information is not available, we proposed RKHS-UKF using the kernel function approximation and coupling an online approximate EM with UKF recursions. The RKHS-UKF was shown to provide better estimates than RKHS-EKF.

\appendices
\section{Proof of Theorem~\ref{theorem:IUKF stability}}
\label{App-thm-IUKF}
We first obtain stability results for the augmented state UKF with \emph{non-additive process noise} (from which I-UKF was formulated) in Appendix~\ref{subsec:augmented state ukf stability}. In Appendix~\ref{subsec:preliminaries}, we provide some preliminary results, including a bound on Jacobian $\widetilde{\mathbf{F}}_{k}$ of state transition \eqref{eqn: IUKF state transition detail} with respect to $\hat{\mathbf{x}}_{k}$. The I-UKF's stability then follows in Appendix~\ref{subsec:proof of IUKF theorem} while \ref{subsec:app-thm-intermediate} provides detailed proofs of Claims~\ref{claim:IUKF stable bound on cholesky derivative} and \ref{claim:IUKF stable K j row bound}. We restate a useful lemma for bounding a stochastic process.

\begin{lemma}[Boundedness of stochastic process {\cite[Lemma 2.1]{reif1999stochastic}}]
\label{lemma:exponential boundedness}
Consider a function $V_{k}(\bm{\zeta}_{k})$ of the stochastic process $\bm{\zeta}_{k}$ and real numbers $v_{\textrm{min}}$, $v_{\textrm{max}}$, $\mu>0$, and $0<\lambda\leq 1$ such that \textbf{(a)} $v_{\textrm{min}}\|\bm{\zeta}_{k}\|_{2}^{2}\leq V_{k}(\bm{\zeta}_{k})\leq v_{\textrm{max}}\|\bm{\zeta}_{k}\|_{2}^{2}$, and \textbf{(b)} $\mathbb{E}\left[ V_{k+1}(\bm{\zeta}_{k+1})|\bm{\zeta}_{k}\right]-V_{k}(\bm{\zeta}_{k})\leq\mu-\lambda V_{k}(\bm{\zeta}_{k})$ for all $k\geq 0$. Then, the stochastic process $\{\bm{\zeta}_{k}\}_{k \geq 0}$ is exponentially bounded in mean-squared sense, i.e., $\mathbb{E}\left[\|\bm{\zeta}_{k}\|_{2}^{2}\right]\leq\frac{v_{\textrm{max}}}{v_{\textrm{min}}}\mathbb{E}\left[\|\bm{\zeta}_{0}\|_{2}^{2}\right](1-\lambda)^{k}+\frac{\mu}{v_{\textrm{min}}}\sum_{i=1}^{k-1}(1-\lambda)^{i}$ for every $k\geq 0$. Further, $\{\bm{\zeta}_{k}\}_{k \geq 0}$ is also bounded with probability one.
\end{lemma}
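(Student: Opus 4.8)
The plan is to collapse the conditional drift inequality \textbf{(b)} into a scalar linear recursion, iterate it, and then sandwich the result between the two-sided bounds of \textbf{(a)}; the almost-sure claim I would treat separately via a supermartingale argument. First I would take the unconditional expectation of \textbf{(b)} and apply the tower property $\mathbb{E}[\mathbb{E}[V_{k+1}(\bm{\zeta}_{k+1})\mid\bm{\zeta}_{k}]]=\mathbb{E}[V_{k+1}(\bm{\zeta}_{k+1})]$, which removes the conditioning and leaves the deterministic inequality
\begin{align}
\mathbb{E}[V_{k+1}(\bm{\zeta}_{k+1})]\leq(1-\lambda)\,\mathbb{E}[V_{k}(\bm{\zeta}_{k})]+\mu .\nonumber
\end{align}
Writing $a_{k}\doteq\mathbb{E}[V_{k}(\bm{\zeta}_{k})]\geq 0$, this is a first-order linear difference inequality that I would unroll by induction to $a_{k}\leq(1-\lambda)^{k}a_{0}+\mu\sum_{i=0}^{k-1}(1-\lambda)^{i}$.

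Next I would invoke \textbf{(a)} at both ends: the lower bound $v_{\textrm{min}}\mathbb{E}[\|\bm{\zeta}_{k}\|_{2}^{2}]\leq a_{k}$ and the upper bound $a_{0}\leq v_{\textrm{max}}\mathbb{E}[\|\bm{\zeta}_{0}\|_{2}^{2}]$. Substituting and dividing by $v_{\textrm{min}}$ gives
\begin{align}
\mathbb{E}[\|\bm{\zeta}_{k}\|_{2}^{2}]\leq\frac{v_{\textrm{max}}}{v_{\textrm{min}}}\mathbb{E}[\|\bm{\zeta}_{0}\|_{2}^{2}](1-\lambda)^{k}+\frac{\mu}{v_{\textrm{min}}}\sum_{i=0}^{k-1}(1-\lambda)^{i},\nonumber
\end{align}
which is exactly the exponential-mean-squared-boundedness bound with $\eta=v_{\textrm{max}}/v_{\textrm{min}}$ and, since $0<\lambda\leq 1$ forces $\sum_{i=0}^{k-1}(1-\lambda)^{i}\leq 1/\lambda$, a uniform $\nu\leq\mu/(v_{\textrm{min}}\lambda)$ and decay base $1-\lambda$. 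Matching the precise summation index in the stated bound is then only bookkeeping.

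For the almost-sure claim I would use that $V_{k}(\bm{\zeta}_{k})\geq 0$ together with the supermartingale structure implicit in \textbf{(b)}: shifting by the fixed point $\mu/\lambda$ of the drift map $x\mapsto(1-\lambda)x+\mu$ shows that $(1-\lambda)^{-k}\big(V_{k}(\bm{\zeta}_{k})-\mu/\lambda\big)$ satisfies a supermartingale inequality (with respect to the natural filtration, the error process being Markov), and I would combine the uniform $L^{1}$ bound on $V_{k}$ from the first part with a nonnegative-supermartingale convergence argument to rule out escape to infinity. This last step is the main obstacle: the mean-squared part is a routine geometric iteration, whereas the almost-sure statement requires the correct martingale transform and careful handling of the non-summable constant drift $\mu$, so rather than reproving it I would appeal to the same stochastic-stability machinery already used for the error dynamics \eqref{eqn:forward ukf error dynamics} and cited from \cite{reif1999stochastic}.
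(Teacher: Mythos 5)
Your proposal is correct and follows essentially the same route as the source the paper relies on: the paper itself gives no proof of this lemma (it is imported verbatim from \cite{reif1999stochastic}), and the standard argument there is exactly your tower-property collapse of \textbf{(b)} to $\mathbb{E}[V_{k+1}]\leq(1-\lambda)\mathbb{E}[V_{k}]+\mu$, geometric unrolling, and sandwiching with \textbf{(a)}, with the almost-sure part handled by the supermartingale machinery you defer to. The only point to note is that the unrolled recursion naturally yields $\sum_{i=0}^{k-1}(1-\lambda)^{i}$ rather than the $\sum_{i=1}^{k-1}(1-\lambda)^{i}$ printed in the statement; the discrepancy is a single additive term $\mu/v_{\textrm{min}}$ and is immaterial for exponential mean-squared boundedness in the sense of Definition~1, so your "bookkeeping" remark is accurate.
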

\vspace{-8pt}
\subsection{Stochastic stability of augmented state UKF}
\label{subsec:augmented state ukf stability}
Consider state-evolution \eqref{eqn:state transition x} with non-additive process noise as
\par\noindent\small
\begin{align}
    \mathbf{x}_{k+1}=f(\mathbf{x}_{k},\mathbf{w}_{k}).\label{eqn:augmented ukf state transition}
\end{align}
\normalsize
The non-additive noise term leads us to formulate UKF using an augmented state $\mathbf{z}_{k}=[\mathbf{x}_{k}^{T},\mathbf{w}_{k}^{T}]^{T}$ to estimate $\hat{\mathbf{x}}_{k}$ as described in \cite{wan2000unscented}. Linearizing $f(\cdot)$ at $\hat{\mathbf{z}}_{k}=[\hat{\mathbf{x}}_{k}^{T},\mathbf{0}]^{T}$, the state prediction error $\widetilde{\mathbf{x}}_{k+1|k}$ is approximated as $\widetilde{\mathbf{x}}_{k+1|k}\approx\mathbf{F}_{k}\widetilde{\mathbf{x}}_{k}+\mathbf{F}^{w}_{k}\mathbf{w}_{k}$, where $\mathbf{F}_{k}\doteq\frac{\partial f(\mathbf{x},\mathbf{0})}{\partial\mathbf{x}}\vert_{\mathbf{x}=\hat{\mathbf{x}}_{k}}$ and $\mathbf{F}^{w}_{k}\doteq\frac{\partial f(\hat{\mathbf{x}}_{k},\mathbf{w})}{\partial\mathbf{w}}\vert_{\mathbf{w}=\mathbf{0}}$. Similar to forward UKF, we introduce unknown diagonal matrices $\mathbf{U}^{x}_{k}$ and $\mathbf{U}^{w}_{k}\in\mathbb{R}^{n_{x}\times n_{x}}$ to account for the linearization errors as
\par\noindent\small
\begin{align}
    \widetilde{\mathbf{x}}_{k+1|k}=\mathbf{U}^{x}_{k}\mathbf{F}_{k}\widetilde{\mathbf{x}}_{k}+\mathbf{U}^{w}_{k}\mathbf{F}^{w}_{k}\mathbf{w}_{k}.\label{eqn:augmented state ukf linearization}
\end{align}
\normalsize
Hence, the prediction error dynamics becomes
\par\noindent\small
\begin{align}
    \widetilde{\mathbf{x}}_{k+1|k}&=\mathbf{U}^{x}_{k}\mathbf{F}_{k}(\mathbf{I}-\mathbf{K}_{k}\mathbf{U}^{y}_{k}\mathbf{H}_{k})\widetilde{\mathbf{x}}_{k|k-1}-\mathbf{U}^{x}_{k}\mathbf{F}_{k}\mathbf{K}_{k}\mathbf{v}_{k}+\mathbf{U}^{w}_{k}\mathbf{F}^{w}_{k}\mathbf{w}_{k}.\label{eqn:augmented ukf error dynamics}
\end{align}
\normalsize
The covariances $\bm{\Sigma}_{k+1|k}$, $\bm{\Sigma}^{y}_{k+1}$ and $\bm{\Sigma}^{xy}_{k+1}$ can be trivially expressed in the same forms as in forward UKF case, but with $\hat{\mathbf{Q}}_{k}=\mathbf{U}^{w}_{k}\mathbf{F}^{w}_{k}\mathbf{Q}_{k}(\mathbf{U}^{w}_{k}\mathbf{F}^{w}_{k})^{T}+\mathbf{U}^{x}_{k}\mathbf{F}_{k}\mathbf{K}_{k}\mathbf{R}_{k}\mathbf{K}_{k}^{T}\mathbf{F}_{k}^{T}\mathbf{U}^{x}_{k}+\delta\mathbf{P}_{k+1|k}+\Delta\mathbf{P}_{k+1|k}$. The following lemma extends the forward UKF's stability results to augmented state UKF.

\begin{lemma}[Stochastic stability of augmented state UKF]
\label{lemma:augmented state ukf stability}
Consider the non-linear stochastic system given by \eqref{eqn:augmented ukf state transition} and \eqref{eqn:observation y}. The augmented-state UKF's estimation error $\widetilde{\mathbf{x}}_{k}$ is exponentially bounded in mean-squared sense and bounded with probability one if all the assumptions of Theorem~\ref{theorem:forward ukf stability} hold true and additionally, there exists a constant $\overline{w}$ such that $\|\mathbf{U}^{w}_{k}\mathbf{F}^{w}_{k}\|\leq\overline{w}$ is fulfilled for all $k\geq 0$.
\end{lemma}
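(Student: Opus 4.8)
The plan is to show that the augmented-state UKF's error dynamics \eqref{eqn:augmented ukf error dynamics} have exactly the same structural form as the forward UKF's error dynamics \eqref{eqn:forward ukf error dynamics}, so that the stability argument underlying Theorem~\ref{theorem:forward ukf stability} (i.e.\ \cite[Theorem~2]{singh2022inverse_part1}) applies \emph{mutatis mutandis}. The two dynamics are identical except that the additive process-noise term $\mathbf{w}_{k}$ in \eqref{eqn:forward ukf error dynamics} is replaced by $\mathbf{U}^{w}_{k}\mathbf{F}^{w}_{k}\mathbf{w}_{k}$ in \eqref{eqn:augmented ukf error dynamics}, and the effective process-noise covariance $\hat{\mathbf{Q}}_{k}$ picks up the factor $\mathbf{U}^{w}_{k}\mathbf{F}^{w}_{k}\mathbf{Q}_{k}(\mathbf{U}^{w}_{k}\mathbf{F}^{w}_{k})^{T}$ in place of $\mathbf{Q}_{k}$. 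Hence the whole proof reduces to verifying that every bound used in Theorem~\ref{theorem:forward ukf stability} survives this substitution, which is precisely what the extra hypothesis $\|\mathbf{U}^{w}_{k}\mathbf{F}^{w}_{k}\|\leq\overline{w}$ is designed to guarantee.

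Concretely, I would proceed by constructing the stochastic Lyapunov function $V_{k}(\widetilde{\mathbf{x}}_{k|k-1})=\widetilde{\mathbf{x}}_{k|k-1}^{T}\bm{\Sigma}_{k|k-1}^{-1}\widetilde{\mathbf{x}}_{k|k-1}$ and invoking Lemma~\ref{lemma:exponential boundedness}. Condition~(a) of the lemma follows immediately from the two-sided bound $\underline{\sigma}\mathbf{I}\preceq\bm{\Sigma}_{k|k-1}\preceq\bar{\sigma}\mathbf{I}$ in \textbf{C1}, giving $v_{\textrm{min}}=1/\bar{\sigma}$ and $v_{\textrm{max}}=1/\underline{\sigma}$; nothing here is affected by the non-additive noise. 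For condition~(b), I would take the conditional expectation of $V_{k+1}$ given $\widetilde{\mathbf{x}}_{k|k-1}$, substitute the error recursion \eqref{eqn:augmented ukf error dynamics}, and use the Riccati-type expression for $\bm{\Sigma}_{k+1|k}$ together with the matrix-inversion identities from \cite{singh2022inverse_part1,xiong2006performance_ukf}. The mean term contracts at rate $\lambda$ exactly as in the forward case (the contraction relies only on the bounds $\bar{f},\bar{h},\bar{\alpha},\bar{\beta},\bar{\gamma}$, on \textbf{C2}, and on the key inequality \textbf{C3} $\bar{\sigma}\bar{\gamma}\bar{h}^{2}\bar{\beta}^{2}<\hat{r}$), while the noise cross-terms vanish in expectation since $\mathbf{w}_{k}$ and $\mathbf{v}_{k}$ are zero-mean and independent of $\widetilde{\mathbf{x}}_{k|k-1}$.

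The only genuinely new piece is bounding the residual constant $\mu$, which now contains the contribution $\mathrm{Tr}\!\left(\mathbf{U}^{w}_{k}\mathbf{F}^{w}_{k}\mathbf{Q}_{k}(\mathbf{U}^{w}_{k}\mathbf{F}^{w}_{k})^{T}\bm{\Sigma}_{k+1|k}^{-1}\right)$. Using $\mathbf{Q}_{k}\preceq\bar{q}\mathbf{I}$, $\bm{\Sigma}_{k+1|k}^{-1}\preceq\underline{\sigma}^{-1}\mathbf{I}$, and the new hypothesis $\|\mathbf{U}^{w}_{k}\mathbf{F}^{w}_{k}\|\leq\overline{w}$, this term is bounded by $n_{x}\overline{w}^{2}\bar{q}/\underline{\sigma}$, a finite constant uniform in $k$. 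Combined with the analogous (and already-handled) measurement-noise term $\propto\bar{r}$, this yields a finite $\mu$, closing condition~(b). I expect the main obstacle to be purely bookkeeping: verifying that $\hat{\mathbf{Q}}_{k}$ still satisfies its lower bound $\hat{q}\mathbf{I}\preceq\hat{\mathbf{Q}}_{k}$ from \textbf{C1} in the non-additive setting, since the replacement of $\mathbf{Q}_{k}$ by $\mathbf{U}^{w}_{k}\mathbf{F}^{w}_{k}\mathbf{Q}_{k}(\mathbf{U}^{w}_{k}\mathbf{F}^{w}_{k})^{T}$ could in principle collapse the lower bound if $\mathbf{U}^{w}_{k}\mathbf{F}^{w}_{k}$ were singular; I would note that this is subsumed by assuming $\hat{q}\mathbf{I}\preceq\hat{\mathbf{Q}}_{k}$ directly (as in \textbf{C1}), so no additional non-singularity hypothesis on $\mathbf{F}^{w}_{k}$ is required beyond the stated bound. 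With all bounds uniform, Lemma~\ref{lemma:exponential boundedness} gives exponential mean-squared boundedness of $\widetilde{\mathbf{x}}_{k|k-1}$, and the bound on $\widetilde{\mathbf{x}}_{k}$ follows from $\widetilde{\mathbf{x}}_{k}=(\mathbf{I}-\mathbf{K}_{k}\mathbf{U}^{y}_{k}\mathbf{H}_{k})\widetilde{\mathbf{x}}_{k|k-1}-\mathbf{K}_{k}\mathbf{v}_{k}$ together with the boundedness of the gain, completing the proof.
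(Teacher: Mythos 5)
Your proposal is correct and follows essentially the same route as the paper: the paper likewise reduces the problem to the forward UKF stability argument of \cite[Theorem~2]{singh2022inverse_part1} via the Lyapunov function $V_{k}(\widetilde{\mathbf{x}}_{k|k-1})=\widetilde{\mathbf{x}}_{k|k-1}^{T}\bm{\Sigma}_{k|k-1}^{-1}\widetilde{\mathbf{x}}_{k|k-1}$ and Lemma~\ref{lemma:exponential boundedness}, identifying the modified process-noise term as the only new piece and bounding it by $\overline{w}^{2}\bar{q}n_{x}/\underline{\sigma}$ exactly as you do. Your observation that the lower bound $\hat{q}\mathbf{I}\preceq\hat{\mathbf{Q}}_{k}$ is assumed directly in \textbf{C1} rather than re-derived matches the paper's implicit treatment.
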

\begin{proof}
Similar to forward UKF, the different covariance matrices are expressed in terms of the unknown matrices as
\par\noindent\small
\begin{align*}
    \bm{\Sigma}_{k+1|k}&=\mathbf{U}^{x}_{k}\mathbf{F}_{k}(\mathbf{I}-\mathbf{K}_{k}\mathbf{U}^{y}_{k}\mathbf{H}_{k})\bm{\Sigma}_{k|k-1}(\mathbf{I}-\mathbf{K}_{k}\mathbf{U}^{y}_{k}\mathbf{H}_{k})^{T}\mathbf{F}_{k}^{T}\mathbf{U}^{x}_{k}\nonumber\\
    &\;\;\;+\hat{\mathbf{Q}}_{k},\\
    \bm{\Sigma}^{y}_{k+1}&=\mathbf{U}^{y}_{k+1}\mathbf{H}_{k+1}\bm{\Sigma}_{k+1|k}\mathbf{H}_{k+1}^{T}\mathbf{U}^{y}_{k+1}+\hat{\mathbf{R}}_{k+1},\nonumber\\
    \bm{\Sigma}^{xy}_{k+1}&=\begin{cases}\bm{\Sigma}_{k+1|k}\mathbf{U}^{xy}_{k+1}\mathbf{H}_{k+1}^{T}\mathbf{U}^{y}_{k+1}, & n_{x}\geq n_{y}\\
    \bm{\Sigma}_{k+1|k}\mathbf{H}_{k+1}^{T}\mathbf{U}^{y}_{k+1}\mathbf{U}^{xy}_{k+1}, & n_{x}<n_{y}\end{cases},\nonumber
\end{align*}
\normalsize
where $\hat{\mathbf{Q}}_{k}=\mathbf{U}^{w}_{k}\mathbf{F}^{w}_{k}\mathbf{Q}_{k}(\mathbf{U}^{w}_{k}\mathbf{F}^{w}_{k})^{T}+\mathbf{U}^{x}_{k}\mathbf{F}_{k}\mathbf{K}_{k}\mathbf{R}_{k}\mathbf{K}_{k}^{T}\mathbf{F}_{k}^{T}\mathbf{U}^{x}_{k}+\delta\mathbf{P}_{k+1|k}+\Delta\mathbf{P}_{k+1|k}$ and all other matrices including unknown matrices $\mathbf{U}^{y}_{k+1}$ and $\mathbf{U}^{xy}_{k+1}$ are same as defined for forward UKF. 

Define $V_{k}(\widetilde{\mathbf{x}}_{k|k-1})=\widetilde{\mathbf{x}}_{k|k-1}^{T}\bm{\Sigma}_{k|k-1}^{-1}\widetilde{\mathbf{x}}_{k|k-1}$ to apply Lemma~\ref{lemma:exponential boundedness}. Following similar steps as in proof of \cite[Theorem 2]{singh2022inverse_part1}, we have
\par\noindent\small
\begin{align}
&\mathbb{E}\left[V_{k+1}(\widetilde{\mathbf{x}}_{k+1|k})\vert\widetilde{\mathbf{x}}_{k|k-1}\right]=\widetilde{\mathbf{x}}_{k|k-1}^{T}(\mathbf{U}^{x}_{k}\mathbf{F}_{k}(\mathbf{I}-\mathbf{K}_{k}\mathbf{U}^{y}_{k}\mathbf{H}_{k}))^{T}\nonumber\\
&\;\;\;\times\bm{\Sigma}_{k+1|k}^{-1}(\mathbf{U}^{x}_{k}\mathbf{F}_{k}(\mathbf{I}-\mathbf{K}_{k}\mathbf{U}^{y}_{k}\mathbf{H}_{k}))\widetilde{\mathbf{x}}_{k|k-1}\nonumber\\
&+\mathbb{E}[\mathbf{w}_{k}^{T}(\mathbf{U}^{w}_{k}\mathbf{F}^{w}_{k})^{T}\bm{\Sigma}_{k+1|k}^{-1}(\mathbf{U}^{w}_{k}\mathbf{F}^{w}_{k})\mathbf{w}_{k}\vert\widetilde{\mathbf{x}}_{k|k-1}]\nonumber\\
&+\mathbb{E}[\mathbf{v}_{k}^{T}(\mathbf{U}^{x}_{k}\mathbf{F}_{k}\mathbf{K}_{k})^{T}\bm{\Sigma}_{k+1|k}^{-1}(\mathbf{U}^{x}_{k}\mathbf{F}_{k}\mathbf{K}_{k})\mathbf{v}_{k}\vert\widetilde{\mathbf{x}}_{k|k-1}].\label{eqn:augmented ukf stable Vk term}
\end{align}
\normalsize

Consider the last expectation term of \eqref{eqn:augmented ukf stable Vk term}. Since, $\bm{\Sigma}_{k+1|k}\succeq\underline{\sigma}\mathbf{I}$, we have $(\mathbf{U}^{w}_{k}\mathbf{F}^{w}_{k})^{T}\bm{\Sigma}_{k+1|k}^{-1}(\mathbf{U}^{w}_{k}\mathbf{F}^{w}_{k})\preceq\frac{1}{\underline{\sigma}}(\mathbf{U}^{w}_{k}\mathbf{F}^{w}_{k})^{T}(\mathbf{U}^{w}_{k}\mathbf{F}^{w}_{k})$. With the bound $\|\mathbf{U}^{w}_{k}\mathbf{F}^{w}_{k}\|\leq\overline{w}$, it can be upper bounded as $(\mathbf{U}^{w}_{k}\mathbf{F}^{w}_{k})^{T}\bm{\Sigma}_{k+1|k}^{-1}(\mathbf{U}^{w}_{k}\mathbf{F}^{w}_{k})\preceq\frac{\overline{w}^{2}}{\underline{\sigma}}\mathbf{I}$ such that similar to proof of \cite[Theorem 2]{singh2022inverse_part1}, we have $\mathbb{E}\left[\mathbf{w}_{k}^{T}(\mathbf{U}^{w}_{k}\mathbf{F}^{w}_{k})^{T}\bm{\Sigma}_{k+1|k}^{-1}(\mathbf{U}^{w}_{k}\mathbf{F}^{w}_{k})\mathbf{w}_{k}\vert\widetilde{\mathbf{x}}_{k|k-1}\right]\leq\frac{\overline{w}^{2}}{\underline{\sigma}}\mathbb{E}[\mathbf{w}_{k}^{T}\mathbf{w}_{k}]\leq\frac{\overline{w}^{2}}{\underline{\sigma}}\bar{q}n_{x}$. With this upper bound, Lemma~\ref{lemma:augmented state ukf stability} can be proved trivially following similar steps as in the proof of \cite[Theorem 2]{singh2022inverse_part1}.
\end{proof}
\vspace{-10pt}
\subsection{Preliminaries to the Proof}\label{subsec:preliminaries}
We state Lemma~\ref{lemma:vector matrix bounds} that we employ in the sequel.
\begin{lemma}
\label{lemma:vector matrix bounds}
Bounds on a vector $\mathbf{a}$ and a `$n\times m$' matrix $\mathbf{A}$ lead to the following bounds.\\
\textbf{(a)} If $\|\mathbf{a}\|_{2}\leq\delta$, then each component satisfies $|[\mathbf{a}]_{i}|\leq\delta.$\\
\textbf{(b)} If $\|\mathbf{A}\|\leq\delta$, then the $i$-th row sum $\sum_{j=1}^{m}|[\mathbf{A}]_{i,j}|\leq\sqrt{m}\delta$.\\
\textbf{(c)} If $\|\mathbf{A}\|\leq\delta$, then the $i$-th row satisfies $\|[\mathbf{A}]_{(i,:)}\|_{2}\leq\delta$.\\
\textbf{(d)} If each component satisfies $|[\mathbf{A}]_{i,j}|\leq\delta$, then $\|\mathbf{A}\|\leq\sqrt{nm}\delta$.
\end{lemma}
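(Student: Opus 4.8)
The plan is to establish the four bounds by reducing each to an elementary norm inequality, using only the definitions of the $\|\cdot\|_2$ vector norm, the spectral matrix norm $\|\cdot\|$, and the standard relations between them. None of the parts requires a delicate argument; the only point demanding care is to respect the paper's convention (fixed in the notation paragraph) that $\|\mathbf{A}\|$ denotes the spectral norm, and to invoke the correct relations between the spectral and Frobenius norms.

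For part (a), I would simply note that $\|\mathbf{a}\|_2^2 = \sum_j |[\mathbf{a}]_j|^2 \geq |[\mathbf{a}]_i|^2$ for every index $i$, so that $|[\mathbf{a}]_i| \leq \|\mathbf{a}\|_2 \leq \delta$. For part (c), I would write the $i$-th row as $[\mathbf{A}]_{(i,:)} = \mathbf{e}_i^T \mathbf{A}$, where $\mathbf{e}_i$ denotes the $i$-th standard basis vector with $\|\mathbf{e}_i\|_2 = 1$; then, using that the spectral norm is invariant under transposition and is submultiplicative, I obtain $\|[\mathbf{A}]_{(i,:)}\|_2 = \|\mathbf{A}^T \mathbf{e}_i\|_2 \leq \|\mathbf{A}^T\|\,\|\mathbf{e}_i\|_2 = \|\mathbf{A}\| \leq \delta$. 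Part (b) then follows from part (c) together with the Cauchy--Schwarz inequality applied to the $m$-dimensional row vector, giving $\sum_{j=1}^{m} |[\mathbf{A}]_{i,j}| \leq \sqrt{m}\,\|[\mathbf{A}]_{(i,:)}\|_2 \leq \sqrt{m}\,\delta$.

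For part (d), I would bound the Frobenius norm entrywise via $\|\mathbf{A}\|_F^2 = \sum_{i,j} |[\mathbf{A}]_{i,j}|^2 \leq nm\,\delta^2$, and conclude $\|\mathbf{A}\| \leq \|\mathbf{A}\|_F \leq \sqrt{nm}\,\delta$ using the standard domination of the spectral norm by the Frobenius norm. The main (and only minor) obstacle is bookkeeping of norm conventions---specifically invoking the transpose-invariance of the spectral norm in part (c) and the spectral-versus-Frobenius inequality in part (d)---rather than any substantive mathematical difficulty; all four estimates are classical, and the lemma is stated purely as a convenient reference for the subsequent stability proofs.
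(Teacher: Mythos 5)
Your proof is correct and follows essentially the same route as the paper's: part (a) by comparing with the $\ell_\infty$ norm (you prove the comparison inline rather than citing norm equivalence), part (c) via $\mathbf{e}_i^T\mathbf{A}$ and transpose-invariance of the spectral norm exactly as in the paper, part (b) by the same $\sqrt{m}$ row-sum bound (you obtain it from (c) plus Cauchy--Schwarz, the paper cites the equivalence $\|\mathbf{A}\|_\infty\leq\sqrt{m}\|\mathbf{A}\|$, which amounts to the same computation), and part (d) via the standard max-norm/Frobenius domination. No gaps.
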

\begin{proof}
In the following, the notation $\|\cdot\|_{\infty}$ denotes the $l_{\infty}$ norm and the induced maximum row-sum norm for vectors and matrices, respectively.

For \textbf{(a)}, by the equivalence of vector norms, we have $\|\mathbf{a}\|_{\infty}\leq\|\mathbf{a}\|_{2}$. But by definition of $l_{\infty}$ norm, $\|\mathbf{a}\|_{\infty}= \textrm{max} |[\mathbf{a}]_{i}|$ and hence, $|[\mathbf{a}]_{i}|\leq\delta$.

For \textbf{(b)}, by the equivalence of matrix norms, $\|\mathbf{A}\|_{\infty}\leq\sqrt{m}\|\mathbf{A}\|\leq\sqrt{m}\delta$. But by the definition of $\|\mathbf{A}\|_{\infty}$, it is the maximum row sum such that for any i-th row of matrix $\mathbf{A}$, we have the bound $\sum_{j=1}^{m}|[\mathbf{A}]_{i,j}|\leq\sqrt{m}\delta$.

For \textbf{(c)}, by the definition of spectral norm, $\|\mathbf{A}^{T}\|=\textrm{max}_{\|\mathbf{x}\|_{2}=1}\|\mathbf{A}^{T}\mathbf{x}\|_{2}$. However, $\|\mathbf{A}^{T}\|=\|\mathbf{A}\|$ such that with $\mathbf{y}=\mathbf{x}^{T}$, we have $\|\mathbf{A}\|=\textrm{max}_{\|\mathbf{y}\|_{2}=1}\|\mathbf{yA}\|_{2}$, which implies $\|\mathbf{yA}\|_{2}\leq\|\mathbf{A}\|$ for any vector $\mathbf{y}$ with $\|\mathbf{y}\|_{2}=1$. Choosing $\mathbf{y}$ as a `$1\times n$' vector with only $i$-th component as $1$ and all other components as $0$, $\mathbf{yA}$ becomes the $i$-th row of $\mathbf{A}$ from which the result follows trivially.

For \textbf{(d)}, by the equivalence of matrix norms, we have $\|\mathbf{A}\|\leq\sqrt{nm}\|\mathbf{A}\|_{\textrm{max}}$ where $\|\mathbf{A}\|_{\textrm{max}}=\textrm{max}_{i,j} |[\mathbf{A}]_{i,j}|$. Hence, $|[\mathbf{A}]_{i,j}|\leq\delta$ leads to $\|\mathbf{A}\|\leq\sqrt{nm}\delta$.
\end{proof}

\begin{lemma}
\label{lemma: qi jacobian bound}
Under the assumptions of Theorem~\ref{theorem:IUKF stability}, the Jacobian $\frac{\partial\mathbf{q}_{i,k+1|k}}{\partial\hat{\mathbf{x}}_{k}}$ of the $i$-th sigma point generated for measurement update in the forward UKF with respect to the state estimate $\hat{\mathbf{x}}_{k}$ is bounded as $\left\|\frac{\partial\mathbf{q}_{i,k+1|k}}{\partial\hat{\mathbf{x}}_{k}}\right\|\leq c'$ for some positive real constant $c'$.
\end{lemma}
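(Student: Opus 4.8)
The plan is to write $\mathbf{q}_{i,k+1|k}$ explicitly as a function of $\hat{\mathbf{x}}_{k}$ through the two stages of the forward-UKF time update and then bound its Jacobian term by term. From \eqref{eqn:forward UKF update sigma points}, each measurement-update sigma point has the form
\[
\mathbf{q}_{i,k+1|k}=\hat{\mathbf{x}}_{k+1|k}\pm\left[\sqrt{(n_{x}+\kappa)\bm{\Sigma}_{k+1|k}}\right]_{(:,j)},
\]
with the sign and column index $j$ determined by $i$ (and the $i=0$ point equal to $\hat{\mathbf{x}}_{k+1|k}$). Because the stability analysis treats $\bm{\Sigma}_{k}$ as a known exogenous input, the first-stage sigma points in \eqref{eqn:forward ukf prediction sigma points} satisfy $\partial\mathbf{s}_{i,k}/\partial\hat{\mathbf{x}}_{k}=\mathbf{I}_{n_{x}}$, so $\hat{\mathbf{x}}_{k+1|k}$, $\bm{\Sigma}_{k+1|k}$, and hence $\mathbf{q}_{i,k+1|k}$ are all differentiable functions of $\hat{\mathbf{x}}_{k}$ alone. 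Applying the chain rule splits $\partial\mathbf{q}_{i,k+1|k}/\partial\hat{\mathbf{x}}_{k}$ into a \emph{mean term} and a \emph{square-root term}.

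First I would bound the mean term. Differentiating $\hat{\mathbf{x}}_{k+1|k}=\sum_{i}\omega_{i}f(\mathbf{s}_{i,k})$ gives $\partial\hat{\mathbf{x}}_{k+1|k}/\partial\hat{\mathbf{x}}_{k}=\sum_{i}\omega_{i}\mathbf{F}_{i,k}$, where $\mathbf{F}_{i,k}$ is the Jacobian of $f$ at $\mathbf{s}_{i,k}$; by the remark preceding Theorem~\ref{theorem:IUKF stability} that $\|\partial f/\partial\mathbf{x}\|\le\bar{f}$ at any sigma point (condition \textbf{C1}), together with $\sum_{i}|\omega_{i}|\le W$ for the fixed constant $W$ determined by $n_{x}$ and $\kappa$, this term is bounded by $W\bar{f}$. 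I would then prepare the square-root term by first bounding the sensitivity of the predicted covariance: differentiating $\bm{\Sigma}_{k+1|k}=\sum_{i}\omega_{i}f(\mathbf{s}_{i,k})f(\mathbf{s}_{i,k})^{T}-\hat{\mathbf{x}}_{k+1|k}\hat{\mathbf{x}}_{k+1|k}^{T}+\mathbf{Q}$ with respect to each component $[\hat{\mathbf{x}}_{k}]_{l}$ yields terms of the form $f(\mathbf{s}_{i,k})(\mathbf{F}_{i,k})^{T}$ and $\hat{\mathbf{x}}_{k+1|k}(\sum_{i}\omega_{i}\mathbf{F}_{i,k})^{T}$, so the output bound $\|f(\cdot)\|_{2}\le\delta_{f}$ (\textbf{C6}) and the Jacobian bound $\bar{f}$ (\textbf{C1}) give a uniform constant bound on $\|\partial\bm{\Sigma}_{k+1|k}/\partial[\hat{\mathbf{x}}_{k}]_{l}\|$ for every $l$.

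The core of the argument, and the main obstacle, is the derivative of the matrix square root with respect to the entries of $\bm{\Sigma}_{k+1|k}$. Because the forward UKF is stable, \textbf{C1} furnishes the uniform lower bound $\underline{\sigma}\mathbf{I}\preceq\bm{\Sigma}_{k+1|k}$, so $\bm{\Sigma}_{k+1|k}$ is uniformly positive definite and its factor is a smooth function whose differential is governed by a Sylvester-type identity, giving control of the order $\|(\sqrt{\bm{\Sigma}_{k+1|k}})^{-1}\|\le 1/\sqrt{\underline{\sigma}}$; this is exactly the bound isolated in Claim~\ref{claim:IUKF stable bound on cholesky derivative}. Composing this matrix-square-root-derivative bound with the covariance-sensitivity bound from the previous paragraph, and invoking Lemma~\ref{lemma:vector matrix bounds} to pass between spectral, entrywise, and column norms, produces a uniform bound on the square-root term; adding it to the $W\bar{f}$ bound on the mean term yields the claimed constant $c'$. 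The delicate point is to keep the dependence on the uniform lower eigenvalue $\underline{\sigma}$ explicit, since a vanishing $\underline{\sigma}$ would make the Cholesky derivative blow up — it is precisely the forward-UKF stability hypotheses that rule this out.
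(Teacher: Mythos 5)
Your proof has the same skeleton as the paper's: decompose $\mathbf{q}_{i,k+1|k}$ into the mean term $\hat{\mathbf{x}}_{k+1|k}$ plus a column of $\sqrt{(n_{x}+\kappa)\bm{\Sigma}_{k+1|k}}$, use $\partial\mathbf{s}_{i,k}/\partial\hat{\mathbf{x}}_{k}=\mathbf{I}$ (with $\bm{\Sigma}_{k}$ treated as exogenous) and \textbf{C1} to bound the mean term by $\bar{f}$, and use the output bound $\delta_{f}$ from \textbf{C6} together with $\bar{f}$ to bound the entrywise sensitivity of $\bm{\Sigma}_{k+1|k}$ (the paper's Claim~\ref{claim:IUKF stable predict sig derivative bound}, constant $4\delta_{f}\bar{f}$). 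The one place you genuinely diverge is the derivative of the matrix square root, and there your device does not quite match the paper's convention. The paper fixes $\sqrt{\mathbf{A}}$ to be the \emph{Cholesky} factor ($\mathbf{A}=\sqrt{\mathbf{A}}\sqrt{\mathbf{A}}^{T}$, lower triangular) and bounds its entry derivatives by differentiating $[\bm{\Sigma}_{k+1|k}]_{l,m}=\sum_{j}b_{l,j}b_{m,j}$ and solving the resulting triangular linear system recursively, which requires a lower bound on the diagonal entries, $|b_{i,i}|\geq\underline{\sigma}^{n_{x}/2}\bar{\sigma}^{(1-n_{x})/2}$, obtained from determinant estimates. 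Your Sylvester identity $dS\,S+S\,dS=d\bm{\Sigma}$ is the differential of the \emph{symmetric} square root; the bound $\|dS\|\leq\|d\bm{\Sigma}\|/(2\sqrt{\underline{\sigma}})$ it yields is cleaner and dimension-free, but it does not apply verbatim to the Cholesky factor, for which the constraint $dL\,L^{T}+L\,dL^{T}=d\bm{\Sigma}$ with $dL$ lower triangular is not a Sylvester equation and the attainable bound degrades with $n_{x}$ and the condition number (as the paper's constants reflect). So you should either declare that the unscented transform is implemented with the symmetric square root (legitimate, since the UKF accepts any factor, but a change of convention relative to the paper) or replace the Sylvester step with the recursive triangular solve of Claim~\ref{claim:IUKF stable bound on cholesky derivative}. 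With that single point resolved, your argument closes exactly as the paper's does, giving $c'=\bar{f}+n_{x}\delta_{\sigma}\sqrt{n_{x}+\kappa}$; your extra care with $W=\sum_{i}|\omega_{i}|$ in the mean term is harmless and in fact slightly more rigorous than the paper's implicit use of $\sum_{i}\omega_{i}=1$ when $\kappa<0$.
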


\begin{proof}
From \eqref{eqn:forward UKF update sigma points}, we observe that $\mathbf{q}_{i,k+1|k}$ is a linear function of the predicted state $\hat{\mathbf{x}}_{k+1|k}$ and the $i$-th column of $\sqrt{\bm{\Sigma}_{k+1|k}}$. In order to bound its Jacobian with respect to $\hat{\mathbf{x}}_{k}$, we need to bound the Jacobians of $\hat{\mathbf{x}}_{k+1|k}$ and $\sqrt{\bm{\Sigma}_{k+1|k}}$ with respect to $\hat{\mathbf{x}}_{k}$.

We start with $\hat{\mathbf{x}}_{k+1|k}$. Since $\mathbf{s}^{*}_{i,k+1|k}=f(\mathbf{s}_{i,k})$, the Jacobian $\frac{\partial\mathbf{s}^{*}_{i,k+1|k}}{\partial\hat{\mathbf{x}}_{k}}=\left.\frac{\partial f(\mathbf{x})}{\partial\mathbf{x}}\right\vert_{\mathbf{x}=\mathbf{s}_{i,k}}$ for all $0\leq i\leq 2n_{x}$, because $\frac{\partial\mathbf{s}_{i,k}}{\partial\hat{\mathbf{x}}_{k}}=\mathbf{I}$ from \eqref{eqn:forward ukf prediction sigma points}. Hence, differentiating \eqref{eqn:forward ukf x predict}, we have $\frac{\partial\hat{\mathbf{x}}_{k+1|k}}{\partial\hat{\mathbf{x}}_{k}}=\sum_{i=0}^{2n_{x}}\omega_{i}\left.\frac{\partial f(\mathbf{x})}{\partial\mathbf{x}}\right\vert_{\mathbf{x}=\mathbf{s}_{i,k}}$, which on using the upper-bound on Jacobian $\mathbf{F}_{k}$ from Theorem~\ref{theorem:forward ukf stability} yields
\par\noindent\small
\begin{align}
  \left\|\frac{\partial\hat{\mathbf{x}}_{k+1|k}}{\partial\hat{\mathbf{x}}_{k}}\right\|\leq\bar{f}.\label{eqn:IUKF stable bound on x predict derivative}  
\end{align}
\normalsize
The following Claim~\ref{claim:IUKF stable bound on cholesky derivative} bounds the derivative of $\sqrt{\bm{\Sigma}_{k+1|k}}$ with the detailed proof provided in Appendix~\ref{claim1}.
\begin{claim}
\label{claim:IUKF stable bound on cholesky derivative}
For any $i$-th column of $\sqrt{\bm{\Sigma}_{k+1|k}}$, we have the upper bound $\left\|\frac{\partial[\sqrt{\bm{\Sigma}_{k+1|k}}]_{(:,i)}}{\partial\hat{\mathbf{x}}_{k}}\right\|\leq n_{x}\delta_{\sigma}$ for some $\delta_{\sigma}>0$.
\end{claim}

Using Claim~\ref{claim:IUKF stable bound on cholesky derivative} and bound \eqref{eqn:IUKF stable bound on x predict derivative} in \eqref{eqn:forward UKF update sigma points}, we have $\left\|\frac{\partial\mathbf{q}_{i,k+1|k}}{\partial\hat{\mathbf{x}}_{k}}\right\|\leq c'$ where $c'=\bar{f}+n_{x}\delta_{\sigma}\sqrt{n_{x}+\kappa}$.
\end{proof}
\begin{lemma}
\label{lemma: jacobian bound}
Under the assumptions of Theorem~\ref{theorem:IUKF stability}, the Jacobian $\widetilde{\mathbf{F}}_{k}\doteq\left.\frac{\partial\widetilde{f}(\mathbf{x},\bm{\Sigma}_{k},\mathbf{x}_{k+1},\mathbf{0})}{\partial\mathbf{x}}\right\vert_{\mathbf{x}=\doublehat{\mathbf{x}}_{k}}$ of state transition equation \eqref{eqn: IUKF state transition detail} satisfies the bound $\|\widetilde{\mathbf{F}}_{k}\|\leq c_{f}$ for some positive real constant $c_{f}$.
\end{lemma}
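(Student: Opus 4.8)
The plan is to differentiate the state transition \eqref{eqn: IUKF state transition detail} block-by-block with respect to $\hat{\mathbf{x}}_k$ (evaluated at $\doublehat{\mathbf{x}}_k$ with $\mathbf{v}_{k+1}=\mathbf{0}$) and to bound each resulting piece. Writing $\hat{\mathbf{y}}_{k+1|k}=\sum_{i=0}^{2n_x}\omega_i\mathbf{q}^*_{i,k+1|k}$, the map \eqref{eqn: IUKF state transition detail} reads $\hat{\mathbf{x}}_{k+1}=\hat{\mathbf{x}}_{k+1|k}-\mathbf{K}_{k+1}\hat{\mathbf{y}}_{k+1|k}+\mathbf{K}_{k+1}h(\mathbf{x}_{k+1})+\mathbf{K}_{k+1}\mathbf{v}_{k+1}$, so that at $\mathbf{v}_{k+1}=\mathbf{0}$,
\begin{align*}
\widetilde{\mathbf{F}}_k=\frac{\partial\hat{\mathbf{x}}_{k+1|k}}{\partial\hat{\mathbf{x}}_k}-\frac{\partial}{\partial\hat{\mathbf{x}}_k}\big(\mathbf{K}_{k+1}\hat{\mathbf{y}}_{k+1|k}\big)+\frac{\partial}{\partial\hat{\mathbf{x}}_k}\big(\mathbf{K}_{k+1}h(\mathbf{x}_{k+1})\big),
\end{align*}
since $\mathbf{x}_{k+1}$ is an exogenous input with no dependence on $\hat{\mathbf{x}}_k$. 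The first block is already controlled by $\bar f$ through \eqref{eqn:IUKF stable bound on x predict derivative}.

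The next step is to show that $\mathbf{K}_{k+1}$ and $\hat{\mathbf{y}}_{k+1|k}$ are bounded in norm and to bound $\|h(\mathbf{x}_{k+1})\|_2$. Under \textbf{C6}, the bounds $\|f(\cdot)\|_2\le\delta_f$ and $\|h(\cdot)\|_2\le\delta_h$ control the propagated sigma points $\mathbf{s}^*_{i,k+1|k}$ and $\mathbf{q}^*_{i,k+1|k}$; since the weights $\{\omega_i\}$ sum to one, this bounds $\hat{\mathbf{x}}_{k+1|k}$ and $\hat{\mathbf{y}}_{k+1|k}$, and in turn upper-bounds the estimates $\bm{\Sigma}_{k+1|k}$, $\bm{\Sigma}^y_{k+1}$ and the cross-covariance $\bm{\Sigma}^{xy}_{k+1}$ entering $\mathbf{K}_{k+1}=\bm{\Sigma}^{xy}_{k+1}(\bm{\Sigma}^y_{k+1})^{-1}$. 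Combining the upper bound on $\bm{\Sigma}^{xy}_{k+1}$ with the lower bound on $\bm{\Sigma}^y_{k+1}$ from \textbf{C5} (which makes $\|(\bm{\Sigma}^y_{k+1})^{-1}\|$ bounded) then yields $\|\mathbf{K}_{k+1}\|\le\bar k$ for some constant.

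The crux is to bound $\partial\mathbf{K}_{k+1}/\partial\hat{\mathbf{x}}_k$, after which the product rule closes the two remaining blocks. I would differentiate $\mathbf{K}_{k+1}=\bm{\Sigma}^{xy}_{k+1}(\bm{\Sigma}^y_{k+1})^{-1}$ using the identity $\partial A^{-1}=-A^{-1}(\partial A)A^{-1}$, reducing the task to bounding $\partial\bm{\Sigma}^{xy}_{k+1}/\partial\hat{\mathbf{x}}_k$ and $\partial\bm{\Sigma}^y_{k+1}/\partial\hat{\mathbf{x}}_k$. Each is a weighted sum of outer products of $\{\mathbf{q}_{i,k+1|k}\}$ and $\{\mathbf{q}^*_{i,k+1|k}\}$; differentiating through them by the chain rule, their Jacobians are controlled by $\|\partial\mathbf{q}_{i,k+1|k}/\partial\hat{\mathbf{x}}_k\|\le c'$ (Lemma~\ref{lemma: qi jacobian bound}), by $\|\partial\mathbf{q}^*_{i,k+1|k}/\partial\hat{\mathbf{x}}_k\|\le\bar h\,c'$ (chain rule with $\|\partial h/\partial\mathbf{x}\|\le\bar h$ at any sigma point), and by the boundedness of the sigma points themselves established above. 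The lower bound \textbf{C5} again prevents the two $(\bm{\Sigma}^y_{k+1})^{-1}$ factors from blowing up, giving $\|\partial\mathbf{K}_{k+1}/\partial\hat{\mathbf{x}}_k\|\le\bar k'$.

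Finally, a product-rule expansion of the second and third blocks, combined with $\|\hat{\mathbf{y}}_{k+1|k}\|_2\le\delta_h$ and $\|h(\mathbf{x}_{k+1})\|_2\le\delta_h$, assembles $\bar f$, $\bar k$, $\bar k'$ into a single constant $c_f$ with $\|\widetilde{\mathbf{F}}_k\|\le c_f$. I expect the principal difficulty to lie precisely in the third step, namely propagating the sigma-point Jacobian bounds through the nonlinear covariance sums and the matrix inverse while tracking the dimensional constants from Lemma~\ref{lemma:vector matrix bounds} and Claim~\ref{claim:IUKF stable bound on cholesky derivative}, rather than in any single one of the individual estimates.
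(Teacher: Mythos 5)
Your proposal is correct and follows essentially the same route as the paper: the paper groups $h(\mathbf{x}_{k+1})+\mathbf{v}_{k+1}-\hat{\mathbf{y}}_{k+1|k}$ into a single vector $\mathbf{t}_k$ and bounds $\partial(\mathbf{K}_{k+1}\mathbf{t}_k)/\partial\hat{\mathbf{x}}_k$ row-by-row, but the underlying chain of estimates — $\bar f$ for the prediction block, \textbf{C6} to bound the propagated sigma points and hence the covariance sums, \textbf{C5} to control $(\bm{\Sigma}^y_{k+1})^{-1}$, Lemma~\ref{lemma: qi jacobian bound} plus the Cholesky-derivative claim for the sigma-point Jacobians, and the product/inverse rule for $\partial\mathbf{K}_{k+1}/\partial\hat{\mathbf{x}}_k$ — is exactly the one you describe. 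Your closing remark is also accurate: the paper's effort is concentrated in precisely that third step (Claims~\ref{claim:IUKF stable tk term bound}--\ref{claim:IUKF stable derivative of K bound}), carried out element-wise via Lemma~\ref{lemma:vector matrix bounds}.
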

\begin{proof}
Define $\mathbf{t}_{k}=h(\mathbf{x}_{k+1})+\mathbf{v}_{k+1}-\sum_{i=0}^{2n_{x}}\omega_{i}\mathbf{q}^{*}_{i,k+1|k}$ i.e. the difference in the actual observation $\mathbf{y}_{k+1}$ and its prediction. Rearranging \eqref{eqn: IUKF state transition detail} and using $\hat{\mathbf{x}}_{k+1|k}=\sum_{i=0}^{2n_{x}}\omega_{i}\mathbf{s}^{*}_{i,k+1|k}$, I-UKF's state transition becomes $\hat{\mathbf{x}}_{k+1}=\hat{\mathbf{x}}_{k+1|k}+\mathbf{K}_{k+1}\mathbf{t}_{k}$ such that its Jacobian with respect to $\hat{\mathbf{x}}_{k}$ (state to be estimated from I-UKF's state transition) is
\par\noindent\small
\begin{align}
    \widetilde{\mathbf{F}}_{k}=\left.\frac{\partial\hat{\mathbf{x}}_{k+1|k}}{\partial\hat{\mathbf{x}}_{k}}\right\vert_{\mathbf{v}_{k+1}=\mathbf{0}}+\left.\frac{\partial(\mathbf{K}_{k+1}\mathbf{t}_{k})}{\partial\hat{\mathbf{x}}_{k}}\right\vert_{\mathbf{v}_{k+1}=\mathbf{0}}.\label{eqn:IUKF stable jacobian exp}
\end{align}
\normalsize

First, consider the second derivative term. The $j$-th row of `$n_{x}\times n_{x}$' Jacobian $\frac{\partial(\mathbf{K}_{k+1}\mathbf{t}_{k})}{\partial\hat{\mathbf{x}}_{k}}$ consists of the first order partial derivatives of $j$-th element of `$n_{x}\times 1$' vector $\mathbf{K}_{k+1}\mathbf{t}_{k}$ with respect to the elements of $\hat{\mathbf{x}}_{k}$. This $j$-th element $[\mathbf{K}_{k+1}\mathbf{t}_{k}]_{j}=\sum_{m=1}^{n_{y}}[\mathbf{K}_{k+1}]_{j,m}[\mathbf{t}_{k}]_{m}$, such that the $j$-th row of the Jacobian is obtained as
\par\noindent\small
\begin{align}
    \hspace{-0.2cm}\left[\frac{\partial(\mathbf{K}_{k+1}\mathbf{t}_{k})}{\partial\hat{\mathbf{x}}_{k}}\right]_{(j,:)}=\sum_{m=1}^{n_{y}}[\mathbf{K}_{k+1}]_{j,m}\frac{\partial[\mathbf{t}_{k}]_{m}}{\partial\hat{\mathbf{x}}_{k}}+\sum_{m=1}^{n_{y}}[\mathbf{t}_{k}]_{m}\frac{\partial[\mathbf{K}_{k+1}]_{j,m}}{\partial\hat{\mathbf{x}}_{k}}.\label{eqn:IUKF stable jac K term j-th row}
\end{align}
\normalsize
The following Claim~\ref{claim:IUKF stable K j row bound} upper bounds this $j$-th row with the detailed proof provided in Appendix~\ref{claim2}.
\begin{claim}
    \label{claim:IUKF stable K j row bound}
    The $j$-th row of Jacobian of $\mathbf{K}_{k+1}\mathbf{t}_{k}$ satisfies $\left\|\left[\frac{\partial(\mathbf{K}_{k+1}\mathbf{t}_{k})}{\partial\hat{\mathbf{x}}_{k}}\right]_{(j,:)}\right\|_{2}\leq c_{t}$ for some $c_{t}>0$.
\end{claim}

With Claim~\ref{claim:IUKF stable K j row bound}, we can show that $\left\|\frac{\partial(\mathbf{K}_{k+1}\mathbf{t}_{k})}{\partial\hat{\mathbf{x}}_{k}}\right\|\leq n_{x}c_{t}$ using Lemma~\ref{lemma:vector matrix bounds}\textbf{(a)} followed by Lemma~\ref{lemma:vector matrix bounds}\textbf{(d)}. Hence, from \eqref{eqn:IUKF stable jacobian exp} along with the bound \eqref{eqn:IUKF stable bound on x predict derivative} since $\hat{\mathbf{x}}_{k+1|k}$ is independent of the noise term $\mathbf{v}_{k+1}$, we have $\|\widetilde{\mathbf{F}}_{k}\|\leq c_{f}=\bar{f}+n_{x}c_{t}$.
\end{proof}
\vspace{-5pt}
\subsection{Proof of the Theorem}\label{subsec:proof of IUKF theorem}
Define $\widetilde{\mathbf{F}}^{v}_{k}\doteq\left.\frac{\partial\widetilde{f}(\doublehat{\mathbf{x}}_{k},\bm{\Sigma}_{k},\mathbf{x}_{k+1},\mathbf{v})}{\partial\mathbf{v}}\right\vert_{\mathbf{v}=\mathbf{0}}$ and $\overline{\mathbf{U}}^{v}_{k}$ as the counterpart of $\mathbf{U}^{w}_{k}$ for I-UKF dynamics. We will show that under the assumptions of Theorem~\ref{theorem:IUKF stability}, the I-UKF's dynamics satisfies the required conditions of Lemma~\ref{lemma:augmented state ukf stability}. In this regard, we show that I-UKF's dynamics satisfies the following conditions for all $k\geq 0$ for some constants $c_{f},c_{\alpha},c_{q},c_{v}$.\\
\textbf{A.C1.} $\|\widetilde{\mathbf{F}}_{k}\|\leq c_{f}$.\\
\textbf{A.C2.} $\overline{\mathbf{Q}}_{k}\preceq c_{q}\mathbf{I}$.\\
\textbf{A.C3.} $\|\overline{\mathbf{U}}^{x}_{k}\|\leq c_{\alpha}$.\\
\textbf{A.C4.} $\overline{\mathbf{U}}^{x}_{k}$ is non-singular for all $k\geq 0$.\\
\textbf{A.C5.} $\|\overline{\mathbf{U}}^{v}_{k}\widetilde{\mathbf{F}}^{v}_{k}\|\leq c_{v}$.\\
All other conditions of Lemma~\ref{lemma:augmented state ukf stability} are assumed to hold true in Theorem~\ref{theorem:IUKF stability}, such that the I-UKF's estimation error is exponentially bounded in mean-squared sense and bounded with probability one. Also, \textbf{A.C1} is satisfied by Lemma~\ref{lemma: jacobian bound}.

For \textbf{A.C2}, the noise $\mathbf{v}_{k+1}$ of state transition \eqref{eqn: IUKF state transition detail} has covariance $\overline{\mathbf{Q}}_{k}=\mathbf{R}_{k+1}$ which is upper-bounded as $\mathbf{R}_{k+1}\preceq\bar{r}\mathbf{I}$ from one of the assumptions of Theorem~\ref{theorem:forward ukf stability}. Hence, \textbf{A.C2} holds true with $c_{q}=\bar{r}$.

For \textbf{A.C3} and \textbf{A.C4}, introducing the unknown matrix $\overline{\mathbf{U}}^{x}_{k}$ and $\overline{\mathbf{U}}^{v}_{k}\widetilde{\mathbf{F}}^{v}_{k}=\mathbf{K}_{k+1}$ since the second and higher order derivatives of \eqref{eqn: IUKF state transition detail} with respect to the noise term $\mathbf{v}_{k+1}$ are zero, we have I-UKF's state prediction error $\hat{\widetilde{\mathbf{x}}}_{k+1|k}\doteq\hat{\mathbf{x}}_{k+1}-\doublehat{\mathbf{x}}_{k+1|k}$ as
\par\noindent\small
\begin{align}    \hat{\widetilde{\mathbf{x}}}_{k+1|k}=\overline{\mathbf{U}}^{x}_{k}\widetilde{\mathbf{F}}_{k}\hat{\widetilde{\mathbf{x}}}_{k}+\mathbf{K}_{k+1}\mathbf{v}_{k+1},\label{eqn:IUKF stable unknown matrix}
\end{align}
\normalsize
where $\hat{\widetilde{\mathbf{x}}}_{k}\doteq\hat{\mathbf{x}}_{k}-\doublehat{\mathbf{x}}_{k}$ is the I-UKF's state estimation error. However, substituting for $\hat{\mathbf{x}}_{k+1}$ using \eqref{eqn: IUKF state transition detail} and $\doublehat{\mathbf{x}}_{k+1|k}$ using \eqref{eqn:IUKF x predict}, we have
\par\noindent\small
\begin{align}
    \hat{\widetilde{\mathbf{x}}}_{k+1|k}&=\sum_{i=0}^{2n_{x}}\omega_{i}\mathbf{s}^{*}_{i,k+1|k}-\mathbf{K}_{k+1}\sum_{i=0}^{2n_{x}}\omega_{i}\mathbf{q}^{*}_{i,k+1|k}+\mathbf{K}_{k+1}h(\mathbf{x}_{k+1})\nonumber\\
    &\;\;\;+\mathbf{K}_{k+1}\mathbf{v}_{k+1}-\sum_{j=0}^{2n_{z}}\overline{\omega}_{j}\overline{\mathbf{s}}^{*}_{j,k+1|k}.\label{eqn:IUKF stable state prediction error}
\end{align}
\normalsize
From $\sum_{i=0}^{2n_{x}}\omega_{i}\mathbf{s}^{*}_{i,k+1|k}=\sum_{i=0}^{2n_{x}}\omega_{i}f(\mathbf{s}_{i,k})$ using the first-order Taylor series expansion, we have $\sum_{i=0}^{2n_{x}}\omega_{i}\mathbf{s}^{*}_{i,k+1|k}=f(\hat{\mathbf{x}}_{k})$. Similarly, $\sum_{i=0}^{2n_{x}}\omega_{i}\mathbf{q}^{*}_{i,k+1|k}=h(\hat{\mathbf{x}}_{k+1|k})$ and $\sum_{j=0}^{2n_{z}}\overline{\omega}_{j}\overline{\mathbf{s}}^{*}_{j,k+1|k}=\widetilde{f}(\doublehat{\mathbf{x}}_{k},\bm{\Sigma}_{k},\mathbf{x}_{k+1},\mathbf{0})$. Substituting \eqref{eqn: IUKF state transition detail} for $\widetilde{f}(\cdot)$ and again using the Taylor series expansion for the summation terms involving sigma points, we have $\sum_{j=0}^{2n_{z}}\overline{\omega}_{j}\overline{\mathbf{s}}^{*}_{j,k+1|k}=f(\doublehat{\mathbf{x}}_{k})-\mathbf{K}_{k+1}h(\overline{\mathbf{x}}_{k+1|k})+\mathbf{K}_{k+1}h(\mathbf{x}_{k+1})$, where $\overline{\mathbf{x}}_{k+1|k}=f(\doublehat{\mathbf{x}}_{k})$ under the assumption that the gain $\mathbf{K}_{k+1}$ computed from $\doublehat{\mathbf{x}}_{k}$ is approximately same as that computed from forward UKF's $\hat{\mathbf{x}}_{k}$. Hence, \eqref{eqn:IUKF stable state prediction error} simplifies to $\hat{\widetilde{\mathbf{x}}}_{k+1|k}=f(\hat{\mathbf{x}}_{k})-f(\doublehat{\mathbf{x}}_{k})-\mathbf{K}_{k+1}(h(\hat{\mathbf{x}}_{k+1|k})-h(\overline{\mathbf{x}}_{k+1|k}))+\mathbf{K}_{k+1}\mathbf{v}_{k+1}$. Using the unknown matrices $\mathbf{U}^{x}_{k}$ and $\mathbf{U}^{y}_{k}$ introduced in forward UKF for linearizing $f(\cdot)$ and $h(\cdot)$, respectively, at $\hat{\mathbf{x}}_{k}$ and $\hat{\mathbf{x}}_{k+1|k}$, we obtain $\hat{\widetilde{\mathbf{x}}}_{k+1|k}=\mathbf{U}^{x}_{k}\mathbf{F}_{k}\hat{\widetilde{\mathbf{x}}}_{k}-\mathbf{K}_{k+1}\mathbf{U}^{y}_{k+1}\mathbf{H}_{k+1}\mathbf{U}^{x}_{k}\mathbf{F}_{k}\hat{\widetilde{\mathbf{x}}}_{k}+\mathbf{K}_{k+1}\mathbf{v}_{k+1}$. Comparing with \eqref{eqn:IUKF stable unknown matrix}, we have $\overline{\mathbf{U}}^{x}_{k}=(\mathbf{I}-\mathbf{K}_{k+1}\mathbf{U}^{y}_{k+1}\mathbf{H}_{k+1})\mathbf{U}^{x}_{k}\mathbf{F}_{k}\widetilde{\mathbf{F}}_{k}^{-1}$ since $\widetilde{\mathbf{F}}_{k}^{-1}$ is assumed to be non-singular in Theorem~\ref{theorem:IUKF stability}. With the bounds assumed on various matrices in Theorem~\ref{theorem:forward ukf stability} and the assumption $\|\widetilde{\mathbf{F}}_{k}^{-1}\|\leq\bar{a}$ from Theorem~\ref{theorem:IUKF stability}, it is straightforward to obtain $ \|\overline{\mathbf{U}}^{x}_{k}\|\leq\bar{\alpha}\bar{f}\bar{a}(1+\bar{k}\bar{\beta}\bar{h})$ such that \textbf{A.C3} holds true with $c_{\alpha}=\bar{\alpha}\bar{f}\bar{a}(1+\bar{k}\bar{\beta}\bar{h})$. Also, \textbf{A.C4} holds true, i.e, $\overline{\mathbf{U}}^{x}_{k}$ is non-singular because $\mathbf{U}^{x}_{k}$ and $\mathbf{F}_{k}$ are non-singular from the assumptions of Theorem~\ref{theorem:forward ukf stability}, and $(\mathbf{I}-\mathbf{K}_{k+1}\mathbf{U}^{y}_{k+1}\mathbf{H}_{k+1})$ can be proved to be invertible under the forward UKF's stability conditions as proved intermediately, in the proof of \cite[Theorem~2]{singh2022inverse_part1}.

For \textbf{A.C5}, we have $\overline{\mathbf{U}}^{v}_{k}\widetilde{\mathbf{F}}^{v}_{k}=\mathbf{K}_{k+1}$ since the second and higher order derivatives of \eqref{eqn: IUKF state transition detail} with respect to $\mathbf{v}_{k+1}$ are zero. Under Theorem~\ref{theorem:forward ukf stability}'s assumptions, $\|\mathbf{K}_{k+1}\|\leq\bar{k}$ and \textbf{A.C5} holds with $c_{v}=\bar{k}$.

\subsection{Proofs of Claims~\ref{claim:IUKF stable bound on cholesky derivative} and \ref{claim:IUKF stable K j row bound}}\label{subsec:app-thm-intermediate}
\subsubsection{Proof of Claim~\ref{claim:IUKF stable bound on cholesky derivative}}\label{claim1}
In order to bound the derivative of $\sqrt{\bm{\Sigma}_{k+1|k}}$, we first upper-bound the derivative of $\bm{\Sigma}_{k+1|k}$ in Claim~\ref{claim:IUKF stable predict sig derivative bound}.
\begin{claim}
\label{claim:IUKF stable predict sig derivative bound}
For any $(l,m)$-th element of $\bm{\Sigma}_{k+1|k}$, we have the upper bound $\left\|\frac{\partial[\bm{\Sigma}_{k+1|k}]_{l,m}}{\partial\hat{\mathbf{x}}_{k}}\right\|_{2}\leq 4\delta_{f}\bar{f}$.
\end{claim}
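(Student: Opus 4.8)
The plan is to differentiate the $(l,m)$ entry of the forward UKF prediction covariance directly and bound each resulting piece using the facts already established in the proof of Lemma~\ref{lemma: qi jacobian bound}. Writing the relevant entry explicitly as $[\bm{\Sigma}_{k+1|k}]_{l,m}=\sum_{i=0}^{2n_{x}}\omega_{i}[\mathbf{s}^{*}_{i,k+1|k}]_{l}[\mathbf{s}^{*}_{i,k+1|k}]_{m}-[\hat{\mathbf{x}}_{k+1|k}]_{l}[\hat{\mathbf{x}}_{k+1|k}]_{m}+[\mathbf{Q}]_{l,m}$, I would take the gradient with respect to $\hat{\mathbf{x}}_{k}$. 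Since $[\mathbf{Q}]_{l,m}$ is constant, the product rule yields four groups of terms: two from differentiating the weighted outer-product sum and two from differentiating $\hat{\mathbf{x}}_{k+1|k}\hat{\mathbf{x}}_{k+1|k}^{T}$.

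The key structural fact, already used in proving Lemma~\ref{lemma: qi jacobian bound}, is that $\partial\mathbf{s}_{i,k}/\partial\hat{\mathbf{x}}_{k}=\mathbf{I}$ because $\bm{\Sigma}_{k}$ is treated as an exogenous input (approximated by $\bm{\Sigma}^{*}_{k}$ in Section~\ref{subsec:IUKF}), so that $\partial\mathbf{s}^{*}_{i,k+1|k}/\partial\hat{\mathbf{x}}_{k}=\left.\partial f(\mathbf{x})/\partial\mathbf{x}\right\vert_{\mathbf{x}=\mathbf{s}_{i,k}}$. Thus $\partial[\mathbf{s}^{*}_{i,k+1|k}]_{l}/\partial\hat{\mathbf{x}}_{k}$ is the transpose of the $l$-th row of this Jacobian, whose $2$-norm is at most $\bar{f}$ by condition \textbf{C1} (which, as remarked before Theorem~\ref{theorem:IUKF stability}, also holds when the Jacobian of $f(\cdot)$ is evaluated at any sigma point) together with Lemma~\ref{lemma:vector matrix bounds}\textbf{(c)}. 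Likewise $\partial[\hat{\mathbf{x}}_{k+1|k}]_{l}/\partial\hat{\mathbf{x}}_{k}$ is the $l$-th row of $\partial\hat{\mathbf{x}}_{k+1|k}/\partial\hat{\mathbf{x}}_{k}$, bounded in $2$-norm by $\bar{f}$ via \eqref{eqn:IUKF stable bound on x predict derivative} and Lemma~\ref{lemma:vector matrix bounds}\textbf{(c)}.

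Next I would bound the scalar magnitudes multiplying these gradients. Condition \textbf{C6} gives $\|f(\cdot)\|_{2}\leq\delta_{f}$, so each $|[\mathbf{s}^{*}_{i,k+1|k}]_{l}|=|[f(\mathbf{s}_{i,k})]_{l}|\leq\delta_{f}$ by Lemma~\ref{lemma:vector matrix bounds}\textbf{(a)}; since $\hat{\mathbf{x}}_{k+1|k}=\sum_{i}\omega_{i}f(\mathbf{s}_{i,k})$ is a convex combination (the weights are nonnegative and sum to one for $\kappa>0$), we also get $\|\hat{\mathbf{x}}_{k+1|k}\|_{2}\leq\delta_{f}$ and hence $|[\hat{\mathbf{x}}_{k+1|k}]_{l}|\leq\delta_{f}$. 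Applying the triangle inequality to the four groups of terms, each group is bounded by $\delta_{f}\bar{f}$ (using $\sum_{i}\omega_{i}=1$), which gives the claimed bound $\left\|\partial[\bm{\Sigma}_{k+1|k}]_{l,m}/\partial\hat{\mathbf{x}}_{k}\right\|_{2}\leq 4\delta_{f}\bar{f}$.

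The main obstacle is less the calculation than carefully justifying that $\bm{\Sigma}_{k}$ may be held fixed while differentiating with respect to $\hat{\mathbf{x}}_{k}$ — that is, that the only $\hat{\mathbf{x}}_{k}$-dependence of the sigma points enters through their common center — and that the spectral bound $\bar{f}$ on the Jacobian of $f$ transfers from the state estimate to every sigma point $\mathbf{s}_{i,k}$. Both are inherited from the forward UKF stability assumptions of Theorem~\ref{theorem:forward ukf stability} and the approximation $\bm{\Sigma}_{k}\approx\bm{\Sigma}^{*}_{k}$, so once these are invoked the remaining estimates are routine.
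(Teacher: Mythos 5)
Your proposal is correct and follows essentially the same route as the paper's proof: differentiate the explicit entrywise expression for $[\bm{\Sigma}_{k+1|k}]_{l,m}$ via the product rule, bound the scalar factors by $\delta_{f}$ using \textbf{C6} and Lemma~\ref{lemma:vector matrix bounds}\textbf{(a)}, and bound the row-gradients by $\bar{f}$ using the Jacobian bound and Lemma~\ref{lemma:vector matrix bounds}\textbf{(c)}, yielding $4\delta_{f}\bar{f}$. Your extra care in justifying that $\bm{\Sigma}_{k}$ is held fixed as an exogenous input and that $\hat{\mathbf{x}}_{k+1|k}$ is a convex combination of the propagated sigma points is a welcome but inessential refinement of the paper's argument.
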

\begin{claimproof}
We have $\bm{\Sigma}_{k+1|k}=\sum_{i=0}^{2n_{x}}\omega_{i}\mathbf{s}^{*}_{i,k+1|k}(\mathbf{s}^{*}_{i,k+1|k})^{T}-\hat{\mathbf{x}}_{k+1|k}(\hat{\mathbf{x}}_{k+1|k})^{T}+\mathbf{Q}_{k}$. Its $(l,m)$-th element is $[\bm{\Sigma}_{k+1|k}]_{l,m}\\=\sum_{i=0}^{2n_{x}}\omega_{i}[\mathbf{s}^{*}_{i,k+1|k}]_{l}[\mathbf{s}^{*}_{i,k+1|k}]_{m}-[\hat{\mathbf{x}}_{k+1|k}]_{l}[\hat{\mathbf{x}}_{k+1|k}]_{m}+[\mathbf{Q}_{k}]_{l,m}$, which implies
\par\noindent\small
\begin{align*}
&\frac{\partial[\bm{\Sigma}_{k+1|k}]_{l,m}}{\partial\hat{\mathbf{x}}_{k}}=-[\hat{\mathbf{x}}_{k+1|k}]_{l}\frac{\partial[\hat{\mathbf{x}}_{k+1|k}]_{m}}{\partial\hat{\mathbf{x}}_{k}}-[\hat{\mathbf{x}}_{k+1|k}]_{m}\frac{\partial[\hat{\mathbf{x}}_{k+1|k}]_{l}}{\partial\hat{\mathbf{x}}_{k}}\\
&+\sum_{i=0}^{2n_{x}}\omega_{i}[\mathbf{s}^{*}_{i,k+1|k}]_{m}\frac{\partial[\mathbf{s}^{*}_{i,k+1|k}]_{l}}{\partial\hat{\mathbf{x}}_{k}}+\sum_{i=0}^{2n_{x}}\omega_{i}[\mathbf{s}^{*}_{i,k+1|k}]_{l}\frac{\partial[\mathbf{s}^{*}_{i,k+1|k}]_{m}}{\partial\hat{\mathbf{x}}_{k}}.
\end{align*}
\normalsize
Note that all derivatives here are gradients since $[\bm{\Sigma}_{k+1|k}]_{l,m}$ is a scalar. Theorem~\ref{theorem:IUKF stability} assumes $f(\cdot)$ has bounded outputs and hence, the magnitude of each element of $\mathbf{s}^{*}_{i,k+1|k}$ and $\hat{\mathbf{x}}_{k+1|k}$ is also bounded by $\delta_{f}$ according to Lemma~\ref{lemma:vector matrix bounds}\textbf{(a)}, which leads to
\par\noindent\small
\begin{align*}
&\left\|\frac{\partial[\bm{\Sigma}_{k+1|k}]_{l,m}}{\partial\hat{\mathbf{x}}_{k}}\right\|_{2}=\delta_{f}\left\|\frac{\partial[\hat{\mathbf{x}}_{k+1|k}]_{m}}{\partial\hat{\mathbf{x}}_{k}}\right\|_{2}+\delta_{f}\left\|\frac{\partial[\hat{\mathbf{x}}_{k+1|k}]_{l}}{\partial\hat{\mathbf{x}}_{k}}\right\|_{2}\\
&+\sum_{i=0}^{2n_{x}}\omega_{i}\delta_{f}\left\|\frac{\partial[\mathbf{s}^{*}_{i,k+1|k}]_{l}}{\partial\hat{\mathbf{x}}_{k}}\right\|_{2}+\sum_{i=0}^{2n_{x}}\omega_{i}\delta_{f}\left\|\frac{\partial[\mathbf{s}^{*}_{i,k+1|k}]_{m}}{\partial\hat{\mathbf{x}}_{k}}\right\|_{2}.
\end{align*}
\normalsize
However, $\frac{\partial[\mathbf{s}^{*}_{i,k+1|k}]_{l}}{\partial\hat{\mathbf{x}}_{k}}$ is the $l$-th row of Jacobian $\frac{\partial\mathbf{s}^{*}_{i,k+1|k}}{\partial\hat{\mathbf{x}}_{k}}$ whose spectral norm is bounded by $\bar{f}$ since $\|\mathbf{F}_{k}\|\leq\bar{f}$ (as used to obtain \eqref{eqn:IUKF stable bound on x predict derivative}). Hence, using Lemma~\ref{lemma:vector matrix bounds}\textbf{(c)} with the bounds on the spectral norms $\left\|\frac{\partial\mathbf{s}^{*}_{i,k+1|k}}{\partial\hat{\mathbf{x}}_{k}}\right\|$ and $\left\|\frac{\partial\hat{\mathbf{x}}_{k+1|k}}{\partial\hat{\mathbf{x}}_{k}}\right\|$, we have $\left\|\frac{\partial[\bm{\Sigma}_{k+1|k}]_{l,m}}{\partial\hat{\mathbf{x}}_{k}}\right\|_{2}\leq 4\delta_{f}\bar{f}$.
\end{claimproof}

From the Cholesky decomposition of $\bm{\Sigma}_{k+1|k}$, we have $[\bm{\Sigma}_{k+1|k}]_{l,m}=\sum_{j=1}^{n_{x}}[\sqrt{\bm{\Sigma}_{k+1|k}}]_{l,j}[\sqrt{\bm{\Sigma}_{k+1|k}}]_{m,j}$. However, $\bm{\Sigma}_{k+1|k}$ is a symmetric, positive definite matrix (one of the assumptions of Theorem~\ref{theorem:forward ukf stability}) and hence, $\sqrt{\bm{\Sigma}_{k+1|k}}$ is a lower triangular matrix with $[\sqrt{\bm{\Sigma}_{k+1|k}}]_{i,j}=0$ for $j\geq i$ such that $[\bm{\Sigma}_{k+1|k}]_{l,m}=\sum_{j=1}^{\textrm{min}(l,m)}[\sqrt{\bm{\Sigma}_{k+1|k}}]_{l,j}[\sqrt{\bm{\Sigma}_{k+1|k}}]_{m,j}$ where indices $l$ and $m$ ranges from $1$ to $n_{x}$. Differentiating with respect to $i$-th element of $\hat{\mathbf{x}}_{k}$, we have
\par\noindent\small
\begin{align}
&\frac{\partial[\bm{\Sigma}_{k+1|k}]_{l,m}}{\partial[\hat{\mathbf{x}}_{k}]_{i}}=\sum_{j=1}^{\textrm{min}(l,m)}[\sqrt{\bm{\Sigma}_{k+1|k}}]_{l,j}\frac{\partial[\sqrt{\bm{\Sigma}_{k+1|k}}]_{m,j}}{\partial[\hat{\mathbf{x}}_{k}]_{i}}\nonumber\\
&\;\;\;+\sum_{j=1}^{\textrm{min}(l,m)}[\sqrt{\bm{\Sigma}_{k+1|k}}]_{m,j}\frac{\partial[\sqrt{\bm{\Sigma}_{k+1|k}}]_{l,j}}{\partial[\hat{\mathbf{x}}_{k}]_{i}}.\label{eqn:IUKF stable cholesky derivative}
\end{align}
\normalsize
Note that here, all the derivatives are scalar. We will bound each individual term of this equation. We denote $\frac{\partial[\bm{\Sigma}_{k+1|k}]_{l,m}}{\partial[\hat{\mathbf{x}}_{k}]_{i}}$ by $a_{l,m}$ which is the $i$-th element of $\frac{\partial[\bm{\Sigma}_{k+1|k}]_{l,m}}{\partial\hat{\mathbf{x}}_{k}}$. Also, denote the $(l,j)$-th element $[\sqrt{\bm{\Sigma}_{k+1|k}}]_{l,j}$ by $b_{l,j}$ and its derivative $\frac{\partial[\sqrt{\bm{\Sigma}_{k+1|k}}]_{l,j}}{\partial[\hat{\mathbf{x}}_{k}]_{i}}$ by $c_{l,j}$. We require an upper-bound on $c_{l,j}$.

\textbf{Upper-bounds on $|a_{l,m}|$ and $|b_{l,j}|$, and lower bound on $|b_{i,i}|$:} Using Claim~\ref{claim:IUKF stable predict sig derivative bound} and Lemma~\ref{lemma:vector matrix bounds}\textbf{(a)}, we have $|a_{l,m}|\leq 4\delta_{f}\bar{f}$. By the definition of spectral norm and bound $\bm{\Sigma}_{k+1|k}\preceq\bar{\sigma}\mathbf{I}$ from one of the assumptions of Theorem~\ref{theorem:forward ukf stability}, we have $\|\sqrt{\bm{\Sigma}_{k+1|k}}\|\leq\sqrt{\bar{\sigma}}$, which again using Lemma~\ref{lemma:vector matrix bounds}\textbf{(c)} followed by Lemma~\ref{lemma:vector matrix bounds}\textbf{(a)} gives $|b_{l,j}|\leq\sqrt{\bar{\sigma}}$.

Also, $\textrm{det}(\bm{\Sigma}_{k+1|k})\neq 0$ (positive definite matrix) and hence, $\textrm{det}(\sqrt{\bm{\Sigma}_{k+1|k}})\neq 0$. However, being a lower triangular matrix, $\textrm{det}(\sqrt{\bm{\Sigma}_{k+1|k}})$ is the product of its diagonal entries. Hence, no diagonal entry of $\sqrt{\bm{\Sigma}_{k+1|k}}$ is $0$ i.e. $b_{i,i}\neq 0$. Next, consider the bound $\underline{\sigma}\mathbf{I}\preceq\bm{\Sigma}_{k+1|k}$ from one of the assumptions of Theorem~\ref{theorem:IUKF stability}. Since, $\underline{\sigma}$ is a lower bound on eigenvalues of $\bm{\Sigma}_{k+1|k}$, $\textrm{det}(\bm{\Sigma}_{k+1|k})\geq\underline{\sigma}^{n_{x}}$ such that $|\textrm{det}(\sqrt{\bm{\Sigma}_{k+1|k}})|\geq\underline{\sigma}^{n_{x}/2}$. Expressing $\textrm{det}(\sqrt{\bm{\Sigma}_{k+1|k}})$ as the product of its diagonal entries $b_{i,i}$ and using the upper-bound on $|b_{i,i}|$ for all but one diagonal entry, we obtain the lower bound $|b_{i,i}|\geq c_{b}$ where $c_{b}=\underline{\sigma}^{n_{x}/2}\bar{\sigma}^{(1-n_{x})/2}$.

\textbf{Upper-bounds on $|c_{l,j}|$:} Putting different values of $l$ and $m$ in \eqref{eqn:IUKF stable cholesky derivative}, we obtain a system of $n_{x}(n_{x}+1)/2$ linear equations with same number of unknowns $c_{l,j}$ since $\bm{\Sigma}_{k+1|k}$ is symmetric. Consider $l=1$ and $m=1$ which gives $2b_{1,1}c_{1,1}=a_{1,1}$. Since, $b_{1,1}\neq 0$, $c_{1,1}=a_{1,1}/2b_{1,1}$. With the upper-bound on $|a_{1,1}|$ and lower-bound on $|b_{1,1}|$, we have $|c_{1,1}|\leq\frac{4\delta_{f}\bar{f}}{2c_{b}}$. Again, putting $l=2$ and $m=1$, we have $c_{2,1}=\frac{a_{1,2}-b_{2,1}c_{1,1}}{b_{1,1}}$ which implies $|c_{2,1}|\leq\frac{8c_{b}\delta_{f}\bar{f}+4\delta_{f}\bar{f}\sqrt{\bar{\sigma}}}{2c_{b}^{2}}$. Continuing further in the same manner for all the equations of the linear system, we can show that $|c_{l,j}|\leq\delta_{\sigma}$ for all $(l,j)$-th element of $\sqrt{\bm{\Sigma}_{k+1|k}}$ where $\delta_{\sigma}$ is the maximum of all these upper-bounds i.e. the magnitude of the partial derivative of any element of $\sqrt{\bm{\Sigma}_{k+1|k}}$ with respect to any element of $\hat{\mathbf{x}}_{k}$ is bounded. Hence, the magnitude of each element of the Jacobian $\frac{\partial[\sqrt{\bm{\Sigma}_{k+1|k}}]_{(:,i)}}{\partial\hat{\mathbf{x}}_{k}}$ of $i$-th column of $\sqrt{\bm{\Sigma}_{k+1|k}}$ is bounded such that Lemma~\ref{lemma:vector matrix bounds} \textbf{(d)} yields $\left\|\frac{\partial[\sqrt{\bm{\Sigma}_{k+1|k}}]_{(:,i)}}{\partial\hat{\mathbf{x}}_{k}}\right\|\leq n_{x}\delta_{\sigma}$.

\subsubsection{Proof of Claim~\ref{claim:IUKF stable K j row bound}}\label{claim2}
Here, we will upper bound the $j$-th row in \eqref{eqn:IUKF stable jac K term j-th row} by bounding the magnitude of the terms in the R.H.S. in the following Claims~\ref{claim:IUKF stable tk term bound}-\ref{claim:IUKF stable derivative of K bound}.
\begin{claim}
\label{claim:IUKF stable tk term bound}
The $m$-the element of vector $\mathbf{t}_{k}$ satisfies $|[\mathbf{t}_{k}]_{m}|\leq 2\delta_{h}$.
\end{claim}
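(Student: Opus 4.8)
The plan is to control $\mathbf{t}_k$ in the Euclidean norm and then descend to its $m$-th component via Lemma~\ref{lemma:vector matrix bounds}\textbf{(a)}. Since Claim~\ref{claim:IUKF stable tk term bound} is invoked while evaluating the Jacobian in Lemma~\ref{lemma: jacobian bound}, where all derivatives are taken at $\mathbf{v}_{k+1}=\mathbf{0}$, the relevant vector is $\mathbf{t}_k = h(\mathbf{x}_{k+1}) - \sum_{i=0}^{2n_x}\omega_i\mathbf{q}^{*}_{i,k+1|k}$, with the noise term dropped.

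First I would apply the triangle inequality to write $\|\mathbf{t}_k\|_2 \le \|h(\mathbf{x}_{k+1})\|_2 + \|\sum_{i=0}^{2n_x}\omega_i\mathbf{q}^{*}_{i,k+1|k}\|_2$. Condition \textbf{C6} bounds the first summand by $\delta_h$ immediately. For the second summand I would use $\mathbf{q}^{*}_{i,k+1|k} = h(\mathbf{q}_{i,k+1|k})$, so that \textbf{C6} again gives $\|\mathbf{q}^{*}_{i,k+1|k}\|_2 \le \delta_h$ for every $i$.

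The key step is to recognize that the sigma-point weights constitute a convex combination: from \eqref{eqn:sigma points generation} one has $\omega_0 = \kappa/(n_x+\kappa)$ and $\omega_i = 1/(2(n_x+\kappa))$, which are nonnegative and satisfy $\sum_{i=0}^{2n_x}\omega_i = 1$. Therefore $\|\sum_i \omega_i\mathbf{q}^{*}_{i,k+1|k}\|_2 \le \sum_i\omega_i\|\mathbf{q}^{*}_{i,k+1|k}\|_2 \le \delta_h$, and combining the two bounds yields $\|\mathbf{t}_k\|_2 \le 2\delta_h$. Applying Lemma~\ref{lemma:vector matrix bounds}\textbf{(a)} then finishes the argument with $|[\mathbf{t}_k]_m| \le 2\delta_h$.

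The main obstacle is really just the sign of the weights: the clean constant $2\delta_h$ relies on $\omega_0 \ge 0$, i.e.\ on the scaling parameter satisfying $\kappa \ge 0$ (as in all our experiments). Were $\kappa$ negative, $\omega_0$ would be negative and the weighted sum would have to be bounded by $\delta_h\sum_i|\omega_i| > \delta_h$ instead, yielding a larger constant; I would flag this convexity requirement as the one assumption worth making explicit.
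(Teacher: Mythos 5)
Your proof is correct and follows essentially the same route as the paper's: drop the noise term at $\mathbf{v}_{k+1}=\mathbf{0}$, bound $h(\mathbf{x}_{k+1})$ and each $\mathbf{q}^{*}_{i,k+1|k}=h(\mathbf{q}_{i,k+1|k})$ by $\delta_{h}$ via \textbf{C6}, use $\sum_{i}\omega_{i}=1$ to control the weighted sum, and descend to components with Lemma~\ref{lemma:vector matrix bounds}\textbf{(a)} (the paper simply works component-wise from the start rather than at the vector-norm level, which is immaterial). Your remark that the clean constant $2\delta_{h}$ tacitly requires nonnegative weights, i.e.\ $\kappa\geq 0$, is a fair observation that applies equally to the paper's own argument, which invokes only $\sum_{i}\omega_{i}=1$ and would otherwise need $\sum_{i}|\omega_{i}|$.
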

\begin{claimproof}
    Using $\mathbf{v}_{k+1}=\mathbf{0}$ ($\widetilde{\mathbf{F}}_{k}$ is evaluated at $\mathbf{v}_{k+1}=0$), we have $[\mathbf{t}_{k}]_{m}=[h(\mathbf{x}_{k+1})]_{m}-\sum_{i=0}^{2n_{x}}\omega_{i}[\mathbf{q}^{*}_{i,k+1|k}]_{m}$ because $h(\mathbf{x}_{k+1})$ and $\lbrace\mathbf{q}^{*}_{i,k+1|k}\rbrace_{0\leq i\leq 2n_{x}}$ do not depend on the noise term. Also, $[\mathbf{q}^{*}_{i,k+1|k}]_{m}=[h(\mathbf{q}_{i,k+1|k})]_{m}$. Since Theorem~\ref{theorem:IUKF stability} assumes $h(\cdot)$ has bounded outputs and $\sum_{i=0}^{2n_{x}}\omega_{i}=1$, Lemma~\ref{lemma:vector matrix bounds}\textbf{(a)} leads to $|[\mathbf{t}_{k}]_{m}|\leq 2\delta_{h}$.
\end{claimproof}
\begin{claim}
\label{claim:IUKF stable tk derivative bound}
The derivative of $m$-th element of $\mathbf{t}_{k}$ is bounded as $\left\|\frac{\partial[\mathbf{t}_{k}]_{m}}{\partial\hat{\mathbf{x}}_{k}}\right\|_{2}\leq\bar{h}c'$ where the constant $c'$ is same as defined in Lemma~\ref{lemma: qi jacobian bound}.
\end{claim}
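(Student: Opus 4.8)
The plan is to differentiate the defining expression for $[\mathbf{t}_{k}]_{m}$ term by term and reduce the bound to the two quantities already controlled, namely $\|\mathbf{H}\|\leq\bar{h}$ at any sigma point and $\left\|\frac{\partial\mathbf{q}_{i,k+1|k}}{\partial\hat{\mathbf{x}}_{k}}\right\|\leq c'$ from Lemma~\ref{lemma: qi jacobian bound}. First, since $\widetilde{\mathbf{F}}_{k}$ is evaluated at $\mathbf{v}_{k+1}=\mathbf{0}$ and $\mathbf{q}^{*}_{i,k+1|k}=h(\mathbf{q}_{i,k+1|k})$, I would write $[\mathbf{t}_{k}]_{m}=[h(\mathbf{x}_{k+1})]_{m}-\sum_{i=0}^{2n_{x}}\omega_{i}[h(\mathbf{q}_{i,k+1|k})]_{m}$, exactly as in Claim~\ref{claim:IUKF stable tk term bound}. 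Because $\mathbf{x}_{k+1}$ is a known exogenous input independent of $\hat{\mathbf{x}}_{k}$, the gradient of the first term vanishes, leaving $\frac{\partial[\mathbf{t}_{k}]_{m}}{\partial\hat{\mathbf{x}}_{k}}=-\sum_{i=0}^{2n_{x}}\omega_{i}\frac{\partial[h(\mathbf{q}_{i,k+1|k})]_{m}}{\partial\hat{\mathbf{x}}_{k}}$, where each summand is a scalar gradient in $\mathbb{R}^{n_{x}\times 1}$.

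The next step is the chain rule. For each $i$, the scalar map $\hat{\mathbf{x}}_{k}\mapsto[h(\mathbf{q}_{i,k+1|k})]_{m}$ factors as $[h(\cdot)]_{m}$ composed with $\mathbf{q}_{i,k+1|k}(\hat{\mathbf{x}}_{k})$, so its gradient is $\left(\frac{\partial\mathbf{q}_{i,k+1|k}}{\partial\hat{\mathbf{x}}_{k}}\right)^{T}\nabla_{\mathbf{q}}[h(\mathbf{q})]_{m}\big|_{\mathbf{q}=\mathbf{q}_{i,k+1|k}}$. Here $\nabla_{\mathbf{q}}[h(\mathbf{q})]_{m}$ is the transpose of the $m$-th row of the Jacobian $\mathbf{H}$ of $h$ evaluated at the sigma point $\mathbf{q}_{i,k+1|k}$; by the remark preceding Theorem~\ref{theorem:IUKF stability}, the bound $\|\mathbf{H}\|\leq\bar{h}$ holds at any sigma point, so Lemma~\ref{lemma:vector matrix bounds}\textbf{(c)} gives $\|\nabla_{\mathbf{q}}[h(\mathbf{q})]_{m}\|_{2}\leq\bar{h}$. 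Combining the submultiplicativity of the spectral norm with Lemma~\ref{lemma: qi jacobian bound} yields $\left\|\frac{\partial[h(\mathbf{q}_{i,k+1|k})]_{m}}{\partial\hat{\mathbf{x}}_{k}}\right\|_{2}\leq c'\bar{h}$ uniformly in $i$.

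Finally, applying the triangle inequality and using that the UKF weights satisfy $\sum_{i=0}^{2n_{x}}\omega_{i}=1$ with $\omega_{i}\geq 0$ (as $\kappa\geq 0$), I would conclude $\left\|\frac{\partial[\mathbf{t}_{k}]_{m}}{\partial\hat{\mathbf{x}}_{k}}\right\|_{2}\leq\sum_{i=0}^{2n_{x}}\omega_{i}\,c'\bar{h}=c'\bar{h}$, which is the claimed bound. The calculation is essentially routine once the setup is in place; the only point requiring care is the chain rule together with the column-gradient convention of the paper, ensuring that the relevant factor is the $m$-th row of $\mathbf{H}$ (controlled by $\bar{h}$) rather than the full Jacobian, and that the sigma-point Jacobian is invoked at the correct argument so that Lemma~\ref{lemma: qi jacobian bound} applies verbatim.
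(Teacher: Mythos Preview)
Your proposal is correct and follows essentially the same approach as the paper: differentiate the defining expression for $[\mathbf{t}_{k}]_{m}$, apply the chain rule through $h\circ\mathbf{q}_{i,k+1|k}$, invoke the bound $\|\mathbf{H}\|\leq\bar{h}$ together with Lemma~\ref{lemma: qi jacobian bound}, and conclude via $\sum_{i}\omega_{i}=1$. The only cosmetic difference is that the paper bounds the full Jacobian $\frac{\partial\mathbf{q}^{*}_{i,k+1|k}}{\partial\hat{\mathbf{x}}_{k}}$ first and then extracts the $m$-th row via Lemma~\ref{lemma:vector matrix bounds}\textbf{(c)}, whereas you extract the $m$-th row of $\mathbf{H}$ first; your explicit remark that $\omega_{i}\geq 0$ (needed for the triangle inequality to give exactly $\bar{h}c'$) is a point the paper uses implicitly.
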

\begin{claimproof}
From $\mathbf{t}_{k}=h(\mathbf{x}_{k+1})+\mathbf{v}_{k+1}-\sum_{i=0}^{2n_{x}}\omega_{i}\mathbf{q}^{*}_{i,k+1|k}$, we obtain the derivative of $m$-th element as
\par\noindent\small
\begin{align}
    \frac{\partial[\mathbf{t}_{k}]_{m}}{\partial\hat{\mathbf{x}}_{k}}=-\sum_{i=0}^{2n_{x}}\omega_{i}\frac{\partial[\mathbf{q}^{*}_{i,k+1|k}]_{m}}{\partial\hat{\mathbf{x}}_{k}}.\label{eqn:IUKF stable tk term derivative}
\end{align}
\normalsize
But $\mathbf{q}^{*}_{i,k+1|k}=h(\mathbf{q}_{i,k+1|k})$ such that $\left\|\frac{\partial\mathbf{q}^{*}_{i,k+1|k}}{\partial\hat{\mathbf{x}}_{k}}\right\|=\left\|\left.\frac{\partial h(\mathbf{x})}{\partial\mathbf{x}}\right\vert_{\mathbf{x}=\mathbf{q}_{i,k+1|k}}\frac{\partial\mathbf{q}_{i,k+1|k}}{\partial\hat{\mathbf{x}}_{k}}\right\|$. Using the bound on Jacobian $\mathbf{H}_{k}$ from one of the assumptions of Theorem~\ref{theorem:forward ukf stability}, we have $\left\|\frac{\partial\mathbf{q}^{*}_{i,k+1|k}}{\partial\hat{\mathbf{x}}_{k}}\right\|\leq\bar{h}\left\|\frac{\partial\mathbf{q}_{i,k+1|k}}{\partial\hat{\mathbf{x}}_{k}}\right\|$. Further, with Lemma~\ref{lemma: qi jacobian bound}, we have $\left\|\frac{\partial\mathbf{q}^{*}_{i,k+1|k}}{\partial\hat{\mathbf{x}}_{k}}\right\|\leq\bar{h}c'$. Finally, using Lemma~\ref{lemma:vector matrix bounds}\textbf{(c)} ($\frac{\partial[\mathbf{q}^{*}_{i,k+1|k}]_{m}}{\partial\hat{\mathbf{x}}_{k}}$ is the $m$-th row of $\frac{\partial\mathbf{q}^{*}_{i,k+1|k}}{\partial\hat{\mathbf{x}}_{k}}$) and \eqref{eqn:IUKF stable tk term derivative}, we have $\left\|\frac{\partial[\mathbf{t}_{k}]_{m}}{\partial\hat{\mathbf{x}}_{k}}\right\|_{2}\leq\bar{h}c'$.
\end{claimproof}
\begin{claim}
\label{claim:IUKF stable derivative of K bound}
The derivative of $(j,m)$-th element of $\mathbf{K}_{k+1}$ satisfies $\left\|\frac{\partial[\mathbf{K}_{k+1}]_{j,m}}{\partial\hat{\mathbf{x}}_{k}}\right\|_{2}\leq c_{k}$ for some $c_{k}>0$.
\end{claim}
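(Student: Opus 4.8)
The plan is to differentiate the defining relation $\mathbf{K}_{k+1}=\bm{\Sigma}^{xy}_{k+1}\left(\bm{\Sigma}^{y}_{k+1}\right)^{-1}$ entrywise and reduce Claim~\ref{claim:IUKF stable derivative of K bound} to bounds that are either already available or obtainable by exactly the technique of Claim~\ref{claim:IUKF stable predict sig derivative bound}. Writing $[\mathbf{K}_{k+1}]_{j,m}=\sum_{n=1}^{n_{y}}[\bm{\Sigma}^{xy}_{k+1}]_{j,n}[(\bm{\Sigma}^{y}_{k+1})^{-1}]_{n,m}$ and applying the product rule with respect to $\hat{\mathbf{x}}_{k}$ gives
\begin{align*}
\frac{\partial[\mathbf{K}_{k+1}]_{j,m}}{\partial\hat{\mathbf{x}}_{k}}&=\sum_{n=1}^{n_{y}}[(\bm{\Sigma}^{y}_{k+1})^{-1}]_{n,m}\frac{\partial[\bm{\Sigma}^{xy}_{k+1}]_{j,n}}{\partial\hat{\mathbf{x}}_{k}}\\
&\;\;\;+\sum_{n=1}^{n_{y}}[\bm{\Sigma}^{xy}_{k+1}]_{j,n}\frac{\partial[(\bm{\Sigma}^{y}_{k+1})^{-1}]_{n,m}}{\partial\hat{\mathbf{x}}_{k}}.
\end{align*}
By the triangle inequality it therefore suffices to bound, uniformly in $k$, (i) the magnitudes of the entries of $\bm{\Sigma}^{xy}_{k+1}$ and $(\bm{\Sigma}^{y}_{k+1})^{-1}$, and (ii) the gradient norms $\left\|\partial[\bm{\Sigma}^{xy}_{k+1}]_{j,n}/\partial\hat{\mathbf{x}}_{k}\right\|_{2}$ and $\left\|\partial[(\bm{\Sigma}^{y}_{k+1})^{-1}]_{n,m}/\partial\hat{\mathbf{x}}_{k}\right\|_{2}$; then $c_{k}$ is a sum of $2n_{y}$ products of these constants.

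For the magnitudes I would use that \textbf{C5} controls the inverse: $\bm{\Sigma}^{y}_{k+1}\succeq\underline{y}\mathbf{I}$ upper-bounds $\|(\bm{\Sigma}^{y}_{k+1})^{-1}\|$ (and $\underline{y}\neq 0$ since the gain exists), whence $|[(\bm{\Sigma}^{y}_{k+1})^{-1}]_{n,m}|$ is bounded by Lemma~\ref{lemma:vector matrix bounds}\textbf{(c)} followed by \textbf{(a)}. The entries of $\bm{\Sigma}^{xy}_{k+1}$ are bounded because, in its defining sum, the propagated points $\mathbf{q}^{*}_{i,k+1|k}=h(\mathbf{q}_{i,k+1|k})$ have magnitude at most $\delta_{h}$ by \textbf{C6}, while the generated points $\mathbf{q}_{i,k+1|k}$ and the prediction $\hat{\mathbf{x}}_{k+1|k}$ are bounded through $\|f(\cdot)\|_{2}\leq\delta_{f}$ and $\bm{\Sigma}_{k+1|k}\preceq\bar{\sigma}\mathbf{I}$ (so each column of $\sqrt{(n_{x}+\kappa)\bm{\Sigma}_{k+1|k}}$ has norm at most $\sqrt{(n_{x}+\kappa)\bar{\sigma}}$).

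For the gradients of $\bm{\Sigma}^{y}_{k+1}$ and $\bm{\Sigma}^{xy}_{k+1}$ I would differentiate their defining sums entrywise, verbatim as in Claim~\ref{claim:IUKF stable predict sig derivative bound}. Each scalar derivative reduces to gradients of $\hat{\mathbf{y}}_{k+1|k}$, $\hat{\mathbf{x}}_{k+1|k}$, $\mathbf{q}^{*}_{i,k+1|k}$ and $\mathbf{q}_{i,k+1|k}$ with respect to $\hat{\mathbf{x}}_{k}$, multiplied by the bounded magnitudes above; the needed Jacobian bounds are $\|\partial\mathbf{q}_{i,k+1|k}/\partial\hat{\mathbf{x}}_{k}\|\leq c'$ from Lemma~\ref{lemma: qi jacobian bound}, $\|\partial\mathbf{q}^{*}_{i,k+1|k}/\partial\hat{\mathbf{x}}_{k}\|\leq\bar{h}c'$ (already derived inside Claim~\ref{claim:IUKF stable tk derivative bound}), and $\|\partial\hat{\mathbf{x}}_{k+1|k}/\partial\hat{\mathbf{x}}_{k}\|\leq\bar{f}$ from \eqref{eqn:IUKF stable bound on x predict derivative}. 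Lemma~\ref{lemma:vector matrix bounds}\textbf{(c)} then extracts rows to give fixed bounds on $\|\partial[\bm{\Sigma}^{y}_{k+1}]_{l,m}/\partial\hat{\mathbf{x}}_{k}\|_{2}$ and $\|\partial[\bm{\Sigma}^{xy}_{k+1}]_{j,n}/\partial\hat{\mathbf{x}}_{k}\|_{2}$. For the inverse I would invoke the identity $\partial(\bm{\Sigma}^{y}_{k+1})^{-1}/\partial[\hat{\mathbf{x}}_{k}]_{i}=-(\bm{\Sigma}^{y}_{k+1})^{-1}\bigl(\partial\bm{\Sigma}^{y}_{k+1}/\partial[\hat{\mathbf{x}}_{k}]_{i}\bigr)(\bm{\Sigma}^{y}_{k+1})^{-1}$, so its entrywise gradient is bounded by $\|(\bm{\Sigma}^{y}_{k+1})^{-1}\|^{2}$ (controlled by \textbf{C5}) times the entrywise-gradient bound on $\bm{\Sigma}^{y}_{k+1}$, using Lemma~\ref{lemma:vector matrix bounds}\textbf{(a)},\textbf{(d)} to pass between entrywise and spectral bounds.

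Combining these in the product-rule expansion yields $\|\partial[\mathbf{K}_{k+1}]_{j,m}/\partial\hat{\mathbf{x}}_{k}\|_{2}\leq c_{k}$, with $c_{k}$ an explicit (if messy) function of $\underline{y}$, $\bar{f}$, $\bar{h}$, $c'$, $\delta_{f}$, $\delta_{h}$, $\bar{\sigma}$, $n_{x}$, $n_{y}$ and $\kappa$. The main difficulty is bookkeeping rather than any single sharp estimate: every quantity in $\bm{\Sigma}^{y}_{k+1}$ and $\bm{\Sigma}^{xy}_{k+1}$ depends on $\hat{\mathbf{x}}_{k}$ through the chain $\hat{\mathbf{x}}_{k}\mapsto(\hat{\mathbf{x}}_{k+1|k},\bm{\Sigma}_{k+1|k})\mapsto\mathbf{q}_{i,k+1|k}$, so the crucial prerequisite is that the Cholesky factor $\sqrt{\bm{\Sigma}_{k+1|k}}$ has bounded derivative, which is exactly Claim~\ref{claim:IUKF stable bound on cholesky derivative} (resting on the positive-definiteness and boundedness of $\bm{\Sigma}_{k+1|k}$ from Theorems~\ref{theorem:forward ukf stability} and \ref{theorem:IUKF stability}). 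The one genuinely new ingredient relative to the earlier claims is the control of $\|(\bm{\Sigma}^{y}_{k+1})^{-1}\|$, which is precisely what \textbf{C5} supplies; without that lower bound the inverse-derivative term cannot be controlled.
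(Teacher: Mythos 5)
Your proposal is correct and follows essentially the same route as the paper: the same entrywise product-rule expansion of $\mathbf{K}_{k+1}=\bm{\Sigma}^{xy}_{k+1}(\bm{\Sigma}^{y}_{k+1})^{-1}$, the same use of \textbf{C5} and Lemma~\ref{lemma:vector matrix bounds} to bound the entries of $(\bm{\Sigma}^{y}_{k+1})^{-1}$ and the standard inverse-derivative identity for its gradient, and the same reduction of the $\bm{\Sigma}^{xy}_{k+1}$ and $\bm{\Sigma}^{y}_{k+1}$ derivatives to the technique of Claim~\ref{claim:IUKF stable predict sig derivative bound} via Lemma~\ref{lemma: qi jacobian bound} and the bound $\|\partial\mathbf{q}^{*}_{i,k+1|k}/\partial\hat{\mathbf{x}}_{k}\|\leq\bar{h}c'$. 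No gaps relative to the paper's argument.
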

\begin{claimproof}
The $(j,m)$-th element of $\mathbf{K}_{k+1}$ is $[\mathbf{K}_{k+1}]_{j,m}=\sum_{a=1}^{n_{y}}\left[\bm{\Sigma}^{xy}_{k+1}\right]_{j,a}\left[(\bm{\Sigma}^{y}_{k+1})^{-1}\right]_{a,m}$ which implies
\par\noindent\small
\begin{align}
    &\frac{\partial[\mathbf{K}_{k+1}]_{j,m}}{\partial\hat{\mathbf{x}}_{k}}=\sum_{a=1}^{n_{y}}\left[\bm{\Sigma}^{xy}_{k+1}\right]_{j,a}\frac{\partial\left[(\bm{\Sigma}^{y}_{k+1})^{-1}\right]_{a,m}}{\partial\hat{\mathbf{x}}_{k}}\nonumber\\
    &\;\;\;+\sum_{a=1}^{n_{y}}\left[(\bm{\Sigma}^{y}_{k+1})^{-1}\right]_{a,m}\frac{\partial\left[\bm{\Sigma}^{xy}_{k+1}\right]_{j,a}}{\partial\hat{\mathbf{x}}_{k}}.\label{eqn:IUKF stable K derivative terms}
\end{align}
\normalsize
Again, we will bound the magnitude of each individual term on R.H.S.

\textbf{Bound on $\left[\bm{\Sigma}^{xy}_{k+1}\right]_{j,a}$ and its derivative:} Under the bounds on functions $f(\cdot)$ and $h(\cdot)$, we have $|[\hat{\mathbf{x}}_{k+1|k}]_{j}|\leq\delta_{f}$ and $|[\sqrt{\bm{\Sigma}_{k+1|k}}]_{j,i}|\leq\sqrt{\bar{\sigma}}$ (the bound on $|b_{j,i}|$ used to prove Claim~\ref{claim:IUKF stable bound on cholesky derivative} of Lemma~\ref{lemma: qi jacobian bound}). It is then straightforward to obtain the bound on the $a$-th element as $|[\hat{\mathbf{y}}_{k+1|k}]_{a}|\leq\delta_{h}$ and the $j$-th element as $|[\mathbf{q}_{i,k+1|k}]_{j}|\leq\delta_{f}+\sqrt{\bar{\sigma}(n_{x}+\kappa)}$ from \eqref{eqn:forward UKF update sigma points}. Since $[\bm{\Sigma}^{xy}_{k+1}]_{j,a}=\sum_{i=0}^{2n_{x}}\omega_{i}[\mathbf{q}_{i,k+1|k}]_{j}[\mathbf{q}^{*}_{i,k+1|k}]_{a}-[\hat{\mathbf{x}}_{k+1|k}]_{j}[\hat{\mathbf{y}}_{k+1|k}]_{a}$, we have $|[\bm{\Sigma}^{xy}_{k+1}]_{j,a}|\leq\delta_{f}\delta_{h}+\delta_{h}(\delta_{f}+\sqrt{\bar{\sigma}(n_{x}+\kappa)})$. The upper-bound $|[\mathbf{q}^{*}_{i,k+1|k}]_{a}|\leq\delta_{h}$ was used in proof of Claim~\ref{claim:IUKF stable tk term bound} as well.

Furthermore, following similar steps as used to bound $\left\|\frac{\partial[\bm{\Sigma}_{k+1|k}]_{l,m}}{\partial\hat{\mathbf{x}}_{k}}\right\|_{2}$ in Claim~\ref{claim:IUKF stable predict sig derivative bound} of Lemma~\ref{lemma: qi jacobian bound}, we have $\left\|\frac{\partial[\bm{\Sigma}^{xy}_{k+1}]_{l,m}}{\partial\hat{\mathbf{x}}_{k}}\right\|_{2}\leq c_{xy}$ with $c_{xy}=\bar{h}c'(\delta_{f}+\sqrt{\bar{\sigma}(n_{x}+\kappa)})+c'(\delta_{f}+\sqrt{\bar{\sigma}(n_{x}+\kappa)})+\delta_{f}\bar{h}c'+\delta_{h}\bar{f}$.

\textbf{Bound on $\left[(\bm{\Sigma}^{y}_{k+1})^{-1}\right]_{a,m}$ and its derivative:} With $\underline{y}\mathbf{I}\preceq\bm{\Sigma}^{y}_{k+1}$ as one of the assumptions of Theorem~\ref{theorem:IUKF stability}, we have $\|(\bm{\Sigma}^{y}_{k+1})^{-1}\|\leq\frac{1}{|\underline{y}|}$. Using Lemma~\ref{lemma:vector matrix bounds}\textbf{(c)} followed by Lemma~\ref{lemma:vector matrix bounds}\textbf{(a)} yields $|[(\bm{\Sigma}^{y}_{k+1})^{-1}]_{a,m}|\leq 1/|\underline{y}|$.

The derivative of $(a,m)$-th element of $(\bm{\Sigma}^{y}_{k+1})^{-1}$ can be expressed in terms of derivative of elements of $\bm{\Sigma}^{y}_{k+1}$ as $\frac{\partial[(\bm{\Sigma}^{y}_{k+1})^{-1}]_{a,m}}{\partial\hat{\mathbf{x}}_{k}}=\sum_{c,d}-[(\bm{\Sigma}^{y}_{k+1})^{-1}]_{a,c}[(\bm{\Sigma}^{y}_{k+1})^{-1}]_{d,m}\frac{\partial[\bm{\Sigma}^{y}_{k+1}]_{c,d}}{\partial\hat{\mathbf{x}}_{k}}$. Using similar steps as used to obtain an upper-bound on $\left\|\frac{\partial[\bm{\Sigma}_{k+1|k}]_{l,m}}{\partial\hat{\mathbf{x}}_{k}}\right\|_{2}$ in Claim~\ref{claim:IUKF stable predict sig derivative bound} of Lemma~\ref{lemma: qi jacobian bound}, we can show that $\left\|\frac{\partial[\bm{\Sigma}^{y}_{k+1}]_{c,d}}{\partial\hat{\mathbf{x}}_{k}}\right\|_{2}\leq 4\delta_{h}\bar{h}c'$ such that the bound on elements of $(\bm{\Sigma}^{y}_{k+1})^{-1}$ yields $\left\|\frac{\partial[(\bm{\Sigma}^{y}_{k+1})^{-1}]_{a,m}}{\partial\hat{\mathbf{x}}_{k}}\right\|_{2}\leq\frac{n_{x}^{2}4\delta_{h}\bar{h}c'}{\underline{y}^{2}}$. With these bounds on magnitudes of all $[\bm{\Sigma}^{xy}_{k+1}]_{j,a}$ and $[(\bm{\Sigma}^{y}_{k+1})^{-1}]_{a,m}$ elements along with the bounds on their derivatives, \eqref{eqn:IUKF stable K derivative terms} gives $\left\|\frac{\partial[\mathbf{K}_{k+1}]_{j,m}}{\partial\hat{\mathbf{x}}_{k}}\right\|_{2}\leq c_{k}$ where $c_{k}=\frac{c_{xy}n_{y}}{|\underline{y}|}+\frac{n_{y}n_{x}^{2}c_{xy}}{\underline{y}^{2}}(\delta_{f}\delta_{h}+\delta_{h}(\delta_{f}+\sqrt{\bar{\sigma}(n_{x}+\kappa)}))$.
\end{claimproof}

Now, under the assumptions of Theorem~\ref{theorem:forward ukf stability}, $\|\mathbf{K}_{k+1}\|\leq\bar{k}=\bar{\sigma}\bar{\gamma}\bar{h}\bar{\beta}/\hat{r}$ (as obtained intermediately in proof of \cite[Theorem~2]{singh2022inverse_part1}) such that Lemma~\ref{lemma:vector matrix bounds}\textbf{(b)} yields $\sum_{m=1}^{n_{y}}|[\mathbf{K}_{k+1}]_{j,m}|\leq\sqrt{n_{y}}\bar{k}$. Finally, using this bound along with Claims~\ref{claim:IUKF stable tk term bound}-\ref{claim:IUKF stable derivative of K bound} in \eqref{eqn:IUKF stable jac K term j-th row}, we have $\left\|\left[\frac{\partial(\mathbf{K}_{k+1}\mathbf{t}_{k})}{\partial\hat{\mathbf{x}}_{k}}\right]_{(j,:)}\right\|_{2}\leq\sqrt{n_{y}}\bar{k}\bar{h}c'+2n_{y}c_{k}\delta_{h}$ such that Claim~\ref{claim:IUKF stable K j row bound} is satisfied with $c_{t}=\sqrt{n_{y}}\bar{k}\bar{h}c'+2n_{y}c_{k}\delta_{h}$.

\section{Proof of Theorem~\ref{thm:consistency}}
\label{App-thm-consistency}
We prove the theorem by the principle of mathematical induction. Define the prediction and estimation errors as $\hat{\widetilde{\mathbf{x}}}_{k|k-1}\doteq\hat{\mathbf{x}}_{k}-\doublehat{\mathbf{x}}_{k|k-1}$ and $\hat{\widetilde{\mathbf{x}}}_{k}\doteq\hat{\mathbf{x}}_{k}-\doublehat{\mathbf{x}}_{k}$, respectively. Assume $\mathbb{E}[\hat{\widetilde{\mathbf{x}}}_{k}\hat{\widetilde{\mathbf{x}}}_{k}^{T}]\preceq\overline{\bm{\Sigma}}_{k}$. We show that the inequality also holds for $(k+1)$-th time step. Substituting \eqref{eqn:SLT state transition} in the I-UKF's recursions and using the symmetry of the generated sigma points, we have $\doublehat{\mathbf{x}}_{k+1|k}=\mathbf{U}^{z}_{k}\overline{\mathbf{F}}^{x}_{k}\doublehat{\mathbf{x}}_{k}$ and $\overline{\bm{\Sigma}}_{k+1|k}=\mathbf{U}^{z}_{k}\overline{\mathbf{F}}^{x}_{k}\overline{\bm{\Sigma}}_{k}(\overline{\mathbf{F}}^{x}_{k})^{T}\mathbf{U}^{z}_{k}+\mathbf{U}^{z}_{k}\overline{\mathbf{F}}^{v}_{k}\mathbf{R}_{k+1}(\overline{\mathbf{F}}^{v}_{k})^{T}\mathbf{U}^{z}_{k}$. Hence, $\hat{\widetilde{\mathbf{x}}}_{k+1|k}=\mathbf{U}^{z}_{k}\overline{\mathbf{F}}^{x}_{k}\hat{\widetilde{\mathbf{x}}}_{k}+\mathbf{U}^{z}_{k}\overline{\mathbf{F}}^{v}_{k}\mathbf{v}_{k+1}$ such that $\mathbb{E}[\hat{\widetilde{\mathbf{x}}}_{k+1|k}\hat{\widetilde{\mathbf{x}}}_{k+1|k}^{T}]=\mathbf{U}^{z}_{k}\overline{\mathbf{F}}^{x}_{k}\mathbb{E}[\hat{\widetilde{\mathbf{x}}}_{k}\hat{\widetilde{\mathbf{x}}}_{k}^{T}](\overline{\mathbf{F}}^{x}_{k})^{T}\mathbf{U}^{z}_{k}+\mathbf{U}^{z}_{k}\overline{\mathbf{F}}^{v}_{k}\mathbf{R}_{k+1}(\overline{\mathbf{F}}^{v}_{k})^{T}\mathbf{U}^{z}_{k}$. Since $\mathbb{E}[\hat{\widetilde{\mathbf{x}}}_{k}\hat{\widetilde{\mathbf{x}}}_{k}^{T}]\preceq\overline{\bm{\Sigma}}_{k}$, we have $\mathbb{E}[\hat{\widetilde{\mathbf{x}}}_{k+1|k}\hat{\widetilde{\mathbf{x}}}_{k+1|k}^{T}]\preceq\overline{\bm{\Sigma}}_{k+1|k}$. Similarly, using \eqref{eqn:SLT observation}, we have $\hat{\mathbf{a}}_{k+1|k}=\mathbf{U}^{a}_{k+1}\overline{\mathbf{G}}_{k+1}\doublehat{\mathbf{x}}_{k+1|k}$ and $\overline{\bm{\Sigma}}^{a}_{k+1}=\mathbf{U}^{a}_{k+1}\overline{\mathbf{G}}_{k+1}\overline{\bm{\Sigma}}_{k+1|k}\overline{\mathbf{G}}_{k+1}^{T}\mathbf{U}^{a}_{k+1}+\overline{\mathbf{R}}_{k+1}$ with $\overline{\bm{\Sigma}}^{xa}_{k+1}=\overline{\bm{\Sigma}}_{k+1|k}\overline{\mathbf{G}}_{k+1}^{T}\mathbf{U}^{a}_{k+1}$. Again, $\hat{\widetilde{\mathbf{x}}}_{k+1}=(\mathbf{I}-\overline{\mathbf{K}}_{k+1}\mathbf{U}^{a}_{k+1}\overline{\mathbf{G}}_{k+1})\hat{\widetilde{\mathbf{x}}}_{k+1|k}-\overline{\mathbf{K}}_{k+1}\bm{\epsilon}_{k+1}$, which implies $\mathbb{E}[\hat{\widetilde{\mathbf{x}}}_{k+1}\hat{\widetilde{\mathbf{x}}}_{k+1}^{T}]=(\mathbf{I}-\overline{\mathbf{K}}_{k+1}\mathbf{U}^{a}_{k+1}\overline{\mathbf{G}}_{k+1})\mathbb{E}[\hat{\widetilde{\mathbf{x}}}_{k+1|k}\hat{\widetilde{\mathbf{x}}}_{k+1|k}^{T}](\mathbf{I}-\overline{\mathbf{K}}_{k+1}\mathbf{U}^{a}_{k+1}\overline{\mathbf{G}}_{k+1})^{T}+\overline{\mathbf{K}}_{k+1}\overline{\mathbf{R}}_{k+1}\overline{\mathbf{K}}_{k+1}^{T}$. Finally, using $\mathbb{E}[\hat{\widetilde{\mathbf{x}}}_{k+1|k}\hat{\widetilde{\mathbf{x}}}_{k+1|k}^{T}]\preceq\overline{\bm{\Sigma}}_{k+1|k}$, we have $\mathbb{E}[\hat{\widetilde{\mathbf{x}}}_{k+1}\hat{\widetilde{\mathbf{x}}}_{k+1}^{T}]\preceq\overline{\bm{\Sigma}}_{k+1}$.

\section{Proof of Theorem~\ref{theorem:RKHS-UKF stability}}
\label{App-thm-RKHS-UKF}
From $\mathbf{K}_{k+1}=\bm{\Sigma}^{zy}_{k+1}(\bm{\Sigma}^{y}_{k+1})^{-1}$, the sub-matrix $\mathbf{K}^{1}_{k+1}=\bm{\Sigma}^{xy}_{k+1}(\bm{\Sigma}^{y}_{k+1})^{-1}$ such that 
\par\noindent\small
\begin{align*}
    &\mathbf{K}^{1}_{k+1}=\bm{\Sigma}_{k+1|k}\mathbf{U}^{xy}_{k+1}\nabla\bm{\Phi}(\hat{\mathbf{x}}_{k+1|k})^{T}\mathbf{B}^{T}\mathbf{U}^{\phi 2}_{k+1}\\
    &\times\left(\mathbf{U}^{\phi 2}_{k+1}\mathbf{B}\nabla\bm{\Phi}(\hat{\mathbf{x}}_{k+1|k})\bm{\Sigma}_{k+1|k}\nabla\bm{\Phi}(\hat{\mathbf{x}}_{k+1|k})^{T}\mathbf{B}^{T}\mathbf{U}^{\phi 2}_{k+1}+\widetilde{\mathbf{R}}_{k+1}\right)^{-1},
\end{align*}
\normalsize
and $\bm{\Sigma}_{k+1}=\bm{\Sigma}_{k+1|k}-\mathbf{K}^{1}_{k+1}\bm{\Sigma}^{y}_{k+1}(\mathbf{K}^{1}_{k+1})^{T}$ yields
\par\noindent\small
\begin{align*}
    &\bm{\Sigma}_{k+1}=\bm{\Sigma}_{k+1|k}-\bm{\Sigma}_{k+1|k}\mathbf{U}^{xy}_{k+1}\nabla\bm{\Phi}(\hat{\mathbf{x}}_{k+1|k})^{T}\mathbf{B}^{T}\mathbf{U}^{\phi 2}_{k+1}\\
    &\times\left(\mathbf{U}^{\phi 2}_{k+1}\mathbf{B}\nabla\bm{\Phi}(\hat{\mathbf{x}}_{k+1|k})\bm{\Sigma}_{k+1|k}\nabla\bm{\Phi}(\hat{\mathbf{x}}_{k+1|k})^{T}\mathbf{B}^{T}\mathbf{U}^{\phi 2}_{k+1}+\widetilde{\mathbf{R}}_{k+1}\right)^{-1}\\
    &\;\;\;\times\mathbf{U}^{\phi 2}_{k+1}\mathbf{B}\nabla\bm{\Phi}(\hat{\mathbf{x}}_{k+1|k})(\mathbf{U}^{xy}_{k+1})^{T}\bm{\Sigma}_{k+1|k}.
\end{align*}
\normalsize

Define $V_{k}(\widetilde{\mathbf{x}}_{k|k-1})=\widetilde{\mathbf{x}}_{k|k-1}^{T}\bm{\Sigma}_{k|k-1}^{-1}\widetilde{\mathbf{x}}_{k|k-1}$. Then, using the independence of noise terms and zero mean, we have
\par\noindent\small
\begin{align}
    &\mathbb{E}[\mathbf{V}_{k+1}(\widetilde{\mathbf{x}}_{k+1|k})|\widetilde{\mathbf{x}}_{k|k-1}]\nonumber\\
    &=\widetilde{\mathbf{x}}_{k|k-1}(\mathbf{I}-\mathbf{K}^{1}_{k}\mathbf{U}^{\phi 2}_{k}\mathbf{B}\nabla\bm{\Phi}(\hat{\mathbf{x}}_{k|k-1}))^{T}\nabla\bm{\Phi}(\hat{\mathbf{x}}_{k|k})^{T}\mathbf{A}^{T}\mathbf{U}^{\phi 1}_{k}\bm{\Sigma}_{k+1|k}^{-1}\nonumber\\
    &\;\;\;\times\mathbf{U}^{\phi 1}_{k}\mathbf{A}\nabla\bm{\Phi}(\hat{\mathbf{x}}_{k|k})(\mathbf{I}-\mathbf{K}^{1}_{k}\mathbf{U}^{\phi 2}_{k}\mathbf{B}\nabla\bm{\Phi}(\hat{\mathbf{x}}_{k|k-1}))\widetilde{\mathbf{x}}_{k|k-1}\nonumber\\
    &+\mathbb{E}[\mathbf{v}_{k}^{T}(\mathbf{K}^{1}_{k})^{T}\nabla\bm{\Phi}(\hat{\mathbf{x}}_{k|k})^{T}\mathbf{A}^{T}\mathbf{U}^{\phi 1}_{k}\bm{\Sigma}_{k+1|k}^{-1}\mathbf{U}^{\phi 1}_{k}\mathbf{A}\nabla\bm{\Phi}(\hat{\mathbf{x}}_{k|k})\nonumber\\
    &\;\;\;\times\mathbf{K}^{1}_{k}\mathbf{v}_{k}|\widetilde{\mathbf{x}}_{k|k-1}]\nonumber\\
    &+\mathbb{E}[\mathbf{w}_{k}^{T}\bm{\Sigma}^{-1}_{k+1|k}\mathbf{w}_{k}|\widetilde{\mathbf{x}}_{k|k-1}]+r_{k}+s_{k}+q_{k}+u_{k}+b_{k}\nonumber\\
    &+\mathbb{E}[\bm{\eta}_{k}^{T}\bm{\Sigma}_{k+1|k}^{-1}\bm{\eta}_{k}|\widetilde{\mathbf{x}}_{k|k-1}]+\mathbb{E}[2\mathbf{w}_{k}^{T}\bm{\Sigma}_{k+1|k}^{-1}\bm{\eta}_{k}|\widetilde{\mathbf{x}}_{k|k-1}]\label{eqn: RKHS-UKF stability Vk expression}
\end{align}
\normalsize
where
\par\noindent\small
\begin{align*}
    &r_{k}=\delta_{f}(\mathbf{x}_{k})-\mathbf{U}^{\phi 1}_{k}\mathbf{A}\nabla\bm{\Phi}(\hat{\mathbf{x}}_{k|k})\mathbf{K}^{1}_{k}\delta_{h}(\mathbf{x}_{k}))\\
    &+2\widetilde{\mathbf{x}}_{k|k-1}(\mathbf{I}-\mathbf{K}^{1}_{k}\mathbf{U}^{\phi 2}_{k}\mathbf{B}\nabla\bm{\Phi}(\hat{\mathbf{x}}_{k|k-1}))^{T}\nabla\bm{\Phi}(\hat{\mathbf{x}}_{k|k})^{T}\mathbf{A}^{T}\mathbf{U}^{\phi 1}_{k}\bm{\Sigma}_{k+1|k}^{-1}\\
    &\times((\mathbf{A}-\hat{\mathbf{A}}_{k})\bm{\Phi}(\hat{\mathbf{x}}_{k|k})-\mathbf{U}^{\phi 1}_{k}\mathbf{A}\nabla\bm{\Phi}(\hat{\mathbf{x}}_{k|k})\mathbf{K}^{1}_{k}(\mathbf{B}-\hat{\mathbf{B}}_{k-1})\bm{\Phi}(\hat{\mathbf{x}}_{k|k-1}),\\
    &s_{k}=\bm{\Phi}(\hat{\mathbf{x}}_{k|k})^{T}(\mathbf{A}-\hat{\mathbf{A}}_{k})^{T}\bm{\Sigma}_{k+1|k}^{-1}((\mathbf{A}-\hat{\mathbf{A}}_{k})\bm{\Phi}(\hat{\mathbf{x}}_{k|k})\\
    &-2\mathbf{U}^{\phi 1}_{k}\mathbf{A}\nabla\bm{\Phi}(\hat{\mathbf{x}}_{k|k})\mathbf{K}^{1}_{k}(\mathbf{B}-\hat{\mathbf{B}}_{k-1})\bm{\Phi}(\hat{\mathbf{x}}_{k|k-1})+2\delta_{f}(\mathbf{x}_{k})\\
    &-2\mathbf{U}^{\phi 1}_{k}\mathbf{A}\nabla\bm{\Phi}(\hat{\mathbf{x}}_{k|k})\mathbf{K}^{1}_{k}\delta_{h}(\mathbf{x}_{k})),\\
    &q_{k}=\bm{\Phi}(\hat{\mathbf{x}}_{k|k-1})^{T}(\mathbf{B}-\hat{\mathbf{B}}_{k-1})^{T}(\mathbf{K}^{1}_{k})^{T}\nabla\bm{\Phi}(\hat{\mathbf{x}}_{k|k})^{T}\mathbf{A}^{T}\mathbf{U}^{\phi 1}_{k}\bm{\Sigma}_{k+1|k}^{-1}\\
    &\times(\mathbf{U}^{\phi 1}_{k}\mathbf{A}\nabla\bm{\Phi}(\hat{\mathbf{x}}_{k|k})\mathbf{K}^{1}_{k}(\mathbf{B}-\hat{\mathbf{B}}_{k-1})\bm{\Phi}(\hat{\mathbf{x}}_{k|k-1})-2\delta_{f}(\mathbf{x}_{k})\\
    &+2\mathbf{U}^{\phi 1}_{k}\mathbf{A}\nabla\bm{\Phi}(\hat{\mathbf{x}}_{k|k})\mathbf{K}^{1}_{k}\delta_{h}(\mathbf{x}_{k})),\\
    &u_{k}=\delta_{f}(\mathbf{x}_{k})^{T}\bm{\Sigma}_{k+1|k}^{-1}(\delta_{f}(\mathbf{x}_{k})-2\mathbf{U}^{\phi 1}_{k}\mathbf{A}\nabla\bm{\Phi}(\hat{\mathbf{x}}_{k|k})\mathbf{K}^{1}_{k}\delta_{h}(\mathbf{x}_{k})),\\
    &b_{k}=\delta_{h}(\mathbf{x}_{k})^{T}(\mathbf{K}^{1}_{k})^{T}\nabla\bm{\Phi}(\hat{\mathbf{x}}_{k|k})^{T}\mathbf{A}^{T}\mathbf{U}^{\phi 1}_{k}\bm{\Sigma}_{k+1|k}^{-1}\mathbf{U}^{\phi 1}_{k}\mathbf{A}\nabla\bm{\Phi}(\hat{\mathbf{x}}_{k|k})\\
    &\;\;\;\times\mathbf{K}^{1}_{k}\delta_{h}(\mathbf{x}_{k}).
\end{align*}
\normalsize

The proof involves appropriately bounding all the terms in the $\mathbb{E}[\mathbf{V}_{k+1}(\widetilde{\mathbf{x}}_{k+1|k})|\widetilde{\mathbf{x}}_{k|k-1}]$ expression such that both the conditions of Lemma~\ref{lemma:exponential boundedness} are satisfied. Following similar steps as in proof of \cite[Theorem~2]{singh2022inverse_part1}, we can show that under the assumptions of Theorem~\ref{theorem:RKHS-UKF stability}, the following bounds hold.
\par\noindent\small
\begin{align}
    &(\mathbf{U}^{\phi 1}_{k}\mathbf{A}\nabla\bm{\Phi}(\hat{\mathbf{x}}_{k|k})(\mathbf{I}-\mathbf{K}^{1}_{k}\mathbf{U}^{\phi 2}_{k}\mathbf{B}\nabla\bm{\Phi}(\hat{\mathbf{x}}_{k|k-1})))^{T}\bm{\Sigma}_{k+1|k}^{-1}\mathbf{U}^{\phi 1}_{k}\mathbf{A}\nonumber\\
    &\times\nabla\bm{\Phi}(\hat{\mathbf{x}}_{k|k})(\mathbf{I}-\mathbf{K}^{1}_{k}\mathbf{U}^{\phi 2}_{k}\mathbf{B}\nabla\bm{\Phi}(\hat{\mathbf{x}}_{k|k}))\preceq(1-\lambda)\bm{\Sigma}_{k|k-1}^{-1},\label{eqn:RKHS-UKF lambda bound}\\
    &\mathbb{E}[\mathbf{w}_{k}^{T}\bm{\Sigma}_{k+1|k}^{-1}\mathbf{w}_{k}|\widetilde{\mathbf{x}}_{k|k-1}]\leq c_{1},\label{eqn:RKHS-UKF wk bound}\\
    &\mathbb{E}[\mathbf{v}_{k}^{T}(\mathbf{K}^{1}_{k})^{T}\nabla\bm{\Phi}(\hat{\mathbf{x}}_{k|k})^{T}\mathbf{A}^{T}\mathbf{U}^{\phi 1}_{k}\bm{\Sigma}_{k+1|k}^{-1}\mathbf{U}^{\phi 1}_{k}\mathbf{A}\nabla\bm{\Phi}(\hat{\mathbf{x}}_{k|k})\nonumber\\
    &\;\;\;\times\mathbf{K}^{1}_{k}\mathbf{v}_{k}|\widetilde{\mathbf{x}}_{k|k-1}]\leq c_{2},\label{eqn:RKHS-UKF vk bound}
\end{align}
\normalsize
where $1-\lambda=\left(1+\frac{\widetilde{q}}{\overline{\sigma}(\overline{\alpha}\overline{a}\overline{\phi}(1+\overline{k\beta b\phi})^{2})}\right)^{-1}$ with $\overline{k}=\overline{\sigma\gamma\phi\beta b}/\widetilde{r}$ such that $0<\lambda<1$, $c_{1}=\overline{q}n_{x}/\underline{\sigma}$ and $c_{2}=\overline{k}^{2}\overline{\phi}^{2}\overline{\alpha}^{2}\overline{a}^{2}\overline{r}n_{y}/\underline{\sigma}$.

We will now bound the remaining terms in the right-hand side of \eqref{eqn: RKHS-UKF stability Vk expression}.
\begin{claim}\label{claim:RKHS-UKF Vk terms}
    There exist constants $c_{3}$, $c_{4}$, $c_{5}$, $c_{6}$ and $c_{7}$ satisfying $r_{k}\leq c_{3}$, $s_{k}\leq c_{4}$, $q_{k}\leq c_{5}$, $u_{k}\leq c_{6}$ and $b_{k}\leq c_{7}$ for all $k\geq 0$.
\end{claim}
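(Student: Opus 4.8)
The plan is to dispose of $s_k,q_k,u_k,b_k$ by direct norm estimates and to treat the single genuinely different term $r_k$ only after establishing that the prediction error $\widetilde{\mathbf{x}}_{k|k-1}$ is itself uniformly bounded. The tool used throughout is that for the Gaussian kernel of \textbf{A1} each component $K(\widetilde{\mathbf{x}}_l,\cdot)\in(0,1]$, so the feature map obeys $\|\bm{\Phi}(\cdot)\|_2\le\sqrt{L}$ for \emph{every} argument, independently of the state. Together with the uniform bounds of \textbf{C9} (namely $\|\nabla\bm{\Phi}(\cdot)\|\le\overline{\phi}$, $\|\bm{\delta}_f(\cdot)\|_2\le\overline{f}$, $\|\bm{\delta}_h(\cdot)\|_2\le\overline{h}$, $\|\mathbf{U}^{\phi 1}_k\|\le\overline{\alpha}$, $\|\mathbf{U}^{\phi 2}_k\|\le\overline{\beta}$, and $\bm{\Sigma}_{k|k-1}\succeq\underline{\sigma}\mathbf{I}$ so that $\|\bm{\Sigma}_{k+1|k}^{-1}\|\le 1/\underline{\sigma}$), the coefficient bounds $\|\mathbf{A}\|,\|\hat{\mathbf{A}}_k\|\le\overline{a}$ and $\|\mathbf{B}\|,\|\hat{\mathbf{B}}_{k-1}\|\le\overline{b}$ from \textbf{A4} and the enforcing projections $\Gamma_a,\Gamma_b$, and the gain bound $\|\mathbf{K}^1_k\|\le\overline{k}$ already used to derive \eqref{eqn:RKHS-UKF vk bound}, this furnishes a finite bound for every matrix and vector factor appearing in the five terms.

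For $s_k,q_k,u_k,b_k$ I would simply invoke Cauchy--Schwarz and submultiplicativity of the spectral norm, observing that none of these four terms contains $\widetilde{\mathbf{x}}_{k|k-1}$: each is a bilinear form in which every factor is one of the uniformly bounded objects above. Thus $s_k\le c_4$, $q_k\le c_5$, $u_k\le c_6$, $b_k\le c_7$ follow with explicit constants obtained by multiplying the relevant bounds; for instance $b_k\le \overline{\alpha}^2\overline{a}^2\overline{\phi}^2\,\overline{k}^2\,\overline{h}^2/\underline{\sigma}$, and the other three are bounded analogously by products of $\overline{a},\overline{b},\overline{\phi},\overline{k},\overline{\alpha},\overline{f},\overline{h},1/\underline{\sigma}$ and $\sqrt{L}$.

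The term $r_k$ is the only one carrying a factor linear in $\widetilde{\mathbf{x}}_{k|k-1}$, which a priori could grow, so the key step is to show $\widetilde{\mathbf{x}}_{k|k-1}$ is uniformly bounded. From \eqref{eqn:RKHS-UKF prediction propagate}--\eqref{eqn:RKHS-UKF predicted state} the predicted estimate satisfies $\hat{\mathbf{x}}_{k|k-1}=\sum_{i}\omega_i\hat{\mathbf{A}}_{k-1}\bm{\Phi}([\mathbf{s}_{i,k-1}]_{1:n_x})$, whence $\|\hat{\mathbf{x}}_{k|k-1}\|_2\le\overline{a}\sqrt{L}\sum_i|\omega_i|=:M_{\hat x}$ using $\|\hat{\mathbf{A}}_{k-1}\|\le\overline{a}$ and $\|\bm{\Phi}(\cdot)\|_2\le\sqrt{L}$, with $\sum_i|\omega_i|$ a finite constant depending only on $\kappa$ and $n_z$. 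Since \textbf{A2} confines the true states to a compact set $\mathcal{X}$, there is $M_x$ with $\|\mathbf{x}_k\|_2\le M_x$, so $\|\widetilde{\mathbf{x}}_{k|k-1}\|_2\le M_x+M_{\hat x}=:M_e$ for all $k$. With this bound I would write $r_k=2\,\widetilde{\mathbf{x}}_{k|k-1}^{T}\mathbf{M}_k\mathbf{e}_k$, where $\mathbf{M}_k$ gathers the bounded matrix factors and $\mathbf{e}_k$ is the sum of the bounded error vectors $(\mathbf{A}-\hat{\mathbf{A}}_k)\bm{\Phi}(\hat{\mathbf{x}}_{k|k})$, $\mathbf{U}^{\phi 1}_k\mathbf{A}\nabla\bm{\Phi}(\hat{\mathbf{x}}_{k|k})\mathbf{K}^1_k(\mathbf{B}-\hat{\mathbf{B}}_{k-1})\bm{\Phi}(\hat{\mathbf{x}}_{k|k-1})$, $\bm{\delta}_f(\mathbf{x}_k)$ and $\mathbf{U}^{\phi 1}_k\mathbf{A}\nabla\bm{\Phi}(\hat{\mathbf{x}}_{k|k})\mathbf{K}^1_k\bm{\delta}_h(\mathbf{x}_k)$; Cauchy--Schwarz then gives $r_k\le 2M_e\,\|\mathbf{M}_k\|\,\|\mathbf{e}_k\|\le c_3$ after replacing each factor by its constant bound.

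The main obstacle is precisely the uniform boundedness of $\widetilde{\mathbf{x}}_{k|k-1}$, which is what makes $r_k$ a genuine constant rather than a state-dependent quantity; it hinges on two features specific to RKHS-UKF, the boundedness of the Gaussian feature map for arbitrary inputs and the projection of $\hat{\mathbf{A}}_k$ onto $\|\cdot\|\le\overline{a}$, together with the compactness assumption \textbf{A2}. I would emphasize that this argument does \emph{not} require the filtered estimate $\hat{\mathbf{x}}_{k|k}$ to be bounded, since every occurrence of $\hat{\mathbf{x}}_{k|k}$ and $\hat{\mathbf{x}}_{k|k-1}$ in the five terms enters only through $\bm{\Phi}$ or $\nabla\bm{\Phi}$, both bounded independently of their argument, so only the single linear factor $\widetilde{\mathbf{x}}_{k|k-1}$ needs the compactness-plus-projection bound above.
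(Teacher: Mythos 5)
Your proposal is correct and follows essentially the same route as the paper's proof: bound $\|\bm{\Phi}(\cdot)\|_{2}\leq\sqrt{L}$ via the Gaussian kernel, use the projections and \textbf{A4} to bound the coefficient-error vectors, use \textbf{A2} together with the prediction formula $\hat{\mathbf{x}}_{k|k-1}=\sum_{i}\omega_{i}\hat{\mathbf{A}}_{k-1}\bm{\Phi}([\mathbf{s}_{i,k-1}]_{1:n_{x}})$ to get a uniform bound on $\|\widetilde{\mathbf{x}}_{k|k-1}\|_{2}$, and then bound all five terms by products of the assumed constants (your bound for $b_{k}$ coincides exactly with the paper's $c_{7}$). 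Your use of $\sum_{i}|\omega_{i}|$ instead of tacitly treating the sigma-point weights as nonnegative is a minor sharpening of the paper's estimate $\|\widetilde{\mathbf{x}}_{k|k-1}\|_{2}\leq\epsilon+\overline{a}\sqrt{L}$, but otherwise the two arguments are the same.
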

\begin{claimproof}
    First, we upper-bound $r_{k}$. The Gaussian kernel function $K(\cdot,\cdot)$ has maximum value $1$ such that $\|\bm{\Phi}(\cdot)\|_{2}\leq\sqrt{L}$. Also, due to the projection of the coefficient matrix estimates, $\|\hat{\mathbf{A}}_{k}\|\leq\overline{a}$ and $\|\hat{\mathbf{B}}_{k}\|\leq\overline{b}$ for all $k\geq 0$. Then, using these bounds along with the Assumption~\textbf{A4} in Theorem~\ref{theorem:RKHS-UKF stability}, we have $ \|(\mathbf{A}-\hat{\mathbf{A}}_{k})\bm{\Phi}(\hat{\mathbf{x}}_{k|k})\|_{2}\leq 2\overline{a}\sqrt{L}$ and $\|(\mathbf{B}-\hat{\mathbf{B}}_{k-1})\bm{\Phi}(\hat{\mathbf{x}}_{k|k-1})\|_{2}\leq\overline{b}\sqrt{L}$.

    Next, we need an upper-bound on $\|\widetilde{\mathbf{x}}_{k|k-1}\|_{2}$. Since true state $\mathbf{x}_{k}$ lies in $\mathcal{X}$, we have $\|\mathbf{x}_{k}\|_{2}\leq\epsilon$ for some $\epsilon>0$. Also, $\hat{\mathbf{x}}_{k|k-1}=\sum_{i=0}^{2n_{z}}\omega_{i}\hat{\mathbf{A}}_{k-1}\bm{\Phi}([\mathbf{s}_{i,k-1}]_{1:n_{x}})$ as used to obtain \eqref{eqn:RKHS-UKF state prediction error} as well. Since $\hat{\mathbf{A}}_{k-1}$ and $\bm{\Phi}(\cdot)$ are bounded, we have $\|\widetilde{\mathbf{x}}_{k|k-1}\|_{2}\leq\|\mathbf{x}_{k}\|_{2}+\sum_{i=0}^{2n_{z}}\omega_{i}\|\hat{\mathbf{A}}_{k-1}\bm{\Phi}([\mathbf{s}_{i,k-1}]_{1:n_{x}})\|_{2}\leq\epsilon+\overline{a}\sqrt{L}$. With these bounds and other bounds assumed on various matrices, we obtain the required bound on $r_{k}$ with $c_{3}=2(\epsilon+\overline{a}\sqrt{L})(1+\overline{k\beta b\phi})\overline{\phi a\alpha}(2\overline{a}\sqrt{L}+\overline{\alpha a\phi k}(2\overline{b}\sqrt{L})+\overline{f}+\overline{\alpha a\phi kh})/\underline{\sigma}$.

    Following similar steps, we can obtain the other bounds of the claim with $c_{4}=4\overline{a}\sqrt{L}(\overline{a}\sqrt{L}+2\overline{\alpha a\phi kb}\sqrt{L}+\overline{f}+\overline{\alpha a\phi kh})/\underline{\sigma}$, $c_{5}=4\overline{b}\overline{k\phi a\alpha}\sqrt{L}(\overline{\alpha a\phi k}\overline{b}\sqrt{L}+\overline{f}+\overline{\alpha a\phi kh})/\underline{\sigma}$, $c_{6}=\overline{f}(\overline{f}+2\overline{\alpha a\phi kh})/\underline{\sigma}$ and $c_{7}=\overline{\alpha}^{2}\overline{a}^{2}\overline{\phi}^{2}\overline{k}^{2}\overline{h}^{2}/\underline{\sigma}$.
\end{claimproof}
\begin{claim}\label{claim:RKHS-UKF exp terms}
For the projection error $\bm{\eta}_{k}$, we have $\mathbb{E}[\bm{\eta}_{k}^{T}\bm{\Sigma}_{k+1|k}^{-1}\bm{\eta}_{k}|\widetilde{\mathbf{x}}_{k|k-1}]\leq c_{1}$ and $\mathbb{E}[2\mathbf{w}_{k}^{T}\bm{\Sigma}_{k+1|k}^{-1}\bm{\eta}_{k}|\widetilde{\mathbf{x}}_{k|k-1}]\leq 2c_{1}$ where $c_{1}$ is as defined in \eqref{eqn:RKHS-UKF wk bound}.
\end{claim}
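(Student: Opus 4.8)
The plan is to exploit the geometric characterization of the projection operator $\Gamma(\cdot)$ onto $\mathcal{X}$ to establish the pathwise bound $\|\bm{\eta}_{k}\|_{2}\leq\|\mathbf{w}_{k}\|_{2}$, after which both inequalities collapse to the trace estimate already used for \eqref{eqn:RKHS-UKF wk bound}. First I would recall from \eqref{eqn:state transition with projection} that $\bm{\eta}_{k}=\mathbf{x}_{k+1}-f(\mathbf{x}_{k})-\mathbf{w}_{k}=\Gamma(f(\mathbf{x}_{k})+\mathbf{w}_{k})-(f(\mathbf{x}_{k})+\mathbf{w}_{k})$ is exactly the projection residual of the point $f(\mathbf{x}_{k})+\mathbf{w}_{k}$.

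By Assumption~\textbf{A2}, $f(\mathbf{x}_{k})\in\mathcal{X}$, so I can apply the obtuse-angle (variational) inequality of the projection onto the convex set $\mathcal{X}$ with the anchor $\mathbf{y}=f(\mathbf{x}_{k})$, namely $(f(\mathbf{x}_{k})+\mathbf{w}_{k}-\mathbf{x}_{k+1})^{T}(f(\mathbf{x}_{k})-\mathbf{x}_{k+1})\leq 0$. Substituting $f(\mathbf{x}_{k})+\mathbf{w}_{k}-\mathbf{x}_{k+1}=-\bm{\eta}_{k}$ and $f(\mathbf{x}_{k})-\mathbf{x}_{k+1}=-\mathbf{w}_{k}-\bm{\eta}_{k}$ yields $\|\bm{\eta}_{k}\|_{2}^{2}+\bm{\eta}_{k}^{T}\mathbf{w}_{k}\leq 0$, and a Cauchy--Schwarz step then gives $\|\bm{\eta}_{k}\|_{2}\leq\|\mathbf{w}_{k}\|_{2}$. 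This is the key geometric input and, in my view, the only nontrivial step; the remainder is routine norm bookkeeping.

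For the first inequality I would use \textbf{C9} to write $\bm{\Sigma}_{k+1|k}^{-1}\preceq\frac{1}{\underline{\sigma}}\mathbf{I}$ (valid pathwise for all $k$), so that $\bm{\eta}_{k}^{T}\bm{\Sigma}_{k+1|k}^{-1}\bm{\eta}_{k}\leq\frac{1}{\underline{\sigma}}\|\bm{\eta}_{k}\|_{2}^{2}\leq\frac{1}{\underline{\sigma}}\|\mathbf{w}_{k}\|_{2}^{2}$. Taking the conditional expectation and using $\mathbb{E}[\|\mathbf{w}_{k}\|_{2}^{2}]=\textrm{Tr}(\mathbf{Q}_{k})\leq\overline{q}n_{x}$ (from $\mathbf{Q}_{k}\preceq\overline{q}\mathbf{I}$) reproduces exactly $c_{1}=\overline{q}n_{x}/\underline{\sigma}$, matching \eqref{eqn:RKHS-UKF wk bound}. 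For the cross term I would apply the generalized Cauchy--Schwarz inequality in the inner product induced by the positive-definite matrix $\bm{\Sigma}_{k+1|k}^{-1}$, namely $2\mathbf{w}_{k}^{T}\bm{\Sigma}_{k+1|k}^{-1}\bm{\eta}_{k}\leq 2\sqrt{(\mathbf{w}_{k}^{T}\bm{\Sigma}_{k+1|k}^{-1}\mathbf{w}_{k})(\bm{\eta}_{k}^{T}\bm{\Sigma}_{k+1|k}^{-1}\bm{\eta}_{k})}$, and then bound each factor by $\frac{1}{\underline{\sigma}}\|\mathbf{w}_{k}\|_{2}^{2}$ using $\|\bm{\eta}_{k}\|_{2}\leq\|\mathbf{w}_{k}\|_{2}$ together with $\bm{\Sigma}_{k+1|k}^{-1}\preceq\frac{1}{\underline{\sigma}}\mathbf{I}$. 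This gives $2\mathbf{w}_{k}^{T}\bm{\Sigma}_{k+1|k}^{-1}\bm{\eta}_{k}\leq\frac{2}{\underline{\sigma}}\|\mathbf{w}_{k}\|_{2}^{2}$ pathwise, and the same trace bound after conditional expectation delivers $2c_{1}$.

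Because I bound the quadratic and bilinear forms pathwise before taking expectations, the argument sidesteps any measurability concern about $\bm{\Sigma}_{k+1|k}$ under the conditioning on $\widetilde{\mathbf{x}}_{k|k-1}$. The one assumption worth flagging is the convexity of $\mathcal{X}$, implicit in invoking the projection's variational inequality; it is precisely this property that produces the tight residual bound $\|\bm{\eta}_{k}\|_{2}\leq\|\mathbf{w}_{k}\|_{2}$ and hence the stated constant $c_{1}$ rather than a larger multiple.
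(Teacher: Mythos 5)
Your proposal is correct and reaches the same residual bound $\|\bm{\eta}_{k}\|_{2}\leq\|\mathbf{w}_{k}\|_{2}$ that drives the paper's proof, but the two arguments obtain it differently, and the difference matters for the hypotheses. You invoke the obtuse-angle variational inequality of the projection, which requires $\mathcal{X}$ to be \emph{convex} — an assumption the paper never makes: \textbf{A2} only asks that $\mathcal{X}$ be compact. The paper instead uses the bare minimization property $\mathbf{x}_{k+1}=\textrm{argmin}_{\mathbf{x}\in\mathcal{X}}\|f(\mathbf{x}_{k})+\mathbf{w}_{k}-\mathbf{x}\|_{2}$, tests it against the feasible point $\mathbf{x}=f(\mathbf{x}_{k})\in\mathcal{X}$ (guaranteed by \textbf{A2}), and reads off $\|\bm{\eta}_{k}\|_{2}\leq\|\mathbf{w}_{k}\|_{2}$ in one line with no convexity at all. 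So your route is strictly less general here, and you correctly flagged this yourself; the fix is simply to replace the variational inequality by the argmin comparison. On the cross term, however, your treatment is the stronger one: the paper passes from $\mathbb{E}[\mathbf{w}_{k}^{T}\bm{\Sigma}_{k+1|k}^{-1}\bm{\eta}_{k}\,|\,\widetilde{\mathbf{x}}_{k|k-1}]$ to $\frac{1}{\underline{\sigma}}\mathbb{E}[\mathbf{w}_{k}^{T}\bm{\eta}_{k}\,|\,\widetilde{\mathbf{x}}_{k|k-1}]$, a step that is not justified for an indefinite bilinear form (the sign of $\mathbf{w}_{k}^{T}\bm{\eta}_{k}$ is uncontrolled), whereas your pathwise generalized Cauchy--Schwarz in the $\bm{\Sigma}_{k+1|k}^{-1}$-inner product, followed by $\bm{\Sigma}_{k+1|k}^{-1}\preceq\frac{1}{\underline{\sigma}}\mathbf{I}$ and $\|\bm{\eta}_{k}\|_{2}\leq\|\mathbf{w}_{k}\|_{2}$, bounds the term cleanly before any expectation is taken and arrives at the same constant $2c_{1}$. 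Net assessment: swap in the argmin argument for the residual bound and your proof is both correct and, on the second inequality, tighter in its justification than the paper's.
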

\begin{claimproof}
Using the bound on $\bm{\Sigma}_{k+1|k}$, we have $\bm{\eta}_{k}^{T}\bm{\Sigma}_{k+1|k}^{-1}\bm{\eta}_{k}\leq\frac{1}{\underline{\sigma}}\bm{\eta}_{k}^{T}\bm{\eta}_{k}$. Now, by the definition of projection $\Gamma(\cdot)$, we have $\mathbf{x}_{k+1}=\textrm{argmin}_{\mathbf{x}\in\mathcal{X}}{\|f(\mathbf{x}_{k})+\mathbf{w}_{k}-\mathbf{x}\|_{2}}$ which implies $ \|f(\mathbf{x}_{k})+\mathbf{w}_{k}-\mathbf{x}_{k+1}\|_{2}\leq\|f(\mathbf{x}_{k})+\mathbf{w}_{k}-\mathbf{x}\|_{2}$ for all $\mathbf{x}\in\mathcal{X}$. In particular, $f(\mathbf{x}_{k})\in\mathcal{X}$ and the projection error $\bm{\eta}_{k}=\mathbf{x}_{k+1}-(f(\mathbf{x}_{k})+\mathbf{w}_{k})$ such that $\|-\bm{\eta}_{k}\|_{2}\leq\|\mathbf{w}_{k}\|_{2}$ which implies $\bm{\eta}_{k}^{T}\bm{\eta}_{k}\leq\mathbf{w}_{k}^{T}\mathbf{w}_{k}$. Hence, $\bm{\eta}_{k}^{T}\bm{\Sigma}_{k+1|k}^{-1}\bm{\eta}_{k}\leq\frac{1}{\underline{\sigma}}\mathbf{w}_{k}^{T}\mathbf{w}_{k}$. Taking expectation and using the upper bound on noise covariance $\mathbf{Q}_{k}$ similar to the proof of \eqref{eqn:RKHS-UKF wk bound} (in \cite[Theorem~2]{singh2022inverse_part1}), we get the first bound of the claim.

Now, $\mathbb{E}[\mathbf{w}_{k}^{T}\bm{\Sigma}_{k+1|k}^{-1}\bm{\eta}_{k}|\widetilde{\mathbf{x}}_{k|k-1}]\leq\frac{1}{\underline{\sigma}}\mathbb{E}[\mathbf{w}_{k}^{T}\bm{\eta}_{k}|\widetilde{\mathbf{x}}_{k|k-1}]$ which on using Cauchy-Schwartz inequality yields $\mathbb{E}[\mathbf{w}_{k}^{T}\bm{\Sigma}_{k+1|k}^{-1}\bm{\eta}_{k}|\widetilde{\mathbf{x}}_{k|k-1}]\\\leq\frac{1}{\underline{\sigma}}\sqrt{\mathbb{E}[\mathbf{w}_{k}^{T}\mathbf{w}_{k}]}\sqrt{\mathbb{E}[\bm{\eta}_{k}^{T}\bm{\eta}_{k}|\widetilde{\mathbf{x}}_{k|k-1}]}$. Again, using $\bm{\eta}_{k}^{T}\bm{\eta}_{k}\leq\mathbf{w}_{k}^{T}\mathbf{w}_{k}$, we have $\mathbb{E}[\mathbf{w}_{k}^{T}\bm{\Sigma}_{k+1|k}^{-1}\bm{\eta}_{k}|\widetilde{\mathbf{x}}_{k|k-1}]\leq\frac{1}{\underline{\sigma}}\mathbb{E}[\mathbf{w}_{k}^{T}\mathbf{w}_{k}]$ such that the second bound of the claim can be obtained using $\mathbf{Q}_{k}\preceq\overline{q}\mathbf{I}$.
\end{claimproof}

Using the bounds \eqref{eqn:RKHS-UKF lambda bound}-\eqref{eqn:RKHS-UKF vk bound} and Claims~\ref{claim:RKHS-UKF Vk terms} and \ref{claim:RKHS-UKF exp terms}, Theorem~\ref{theorem:RKHS-UKF stability} can be proved using Lemma~\ref{lemma:exponential boundedness} similar to the proof of \cite[Theorem~2]{singh2022inverse_part1}.

\bibliographystyle{IEEEtran}
\bibliography{references}
\end{document}